\newtheorem{theorem}[subsubsection]{Theorem}
\newtheorem{proposition}[subsubsection]{Proposition}
\newtheorem{corollary}[subsubsection]{Corollary}
\newtheorem{lemma}[subsubsection]{Lemma}
\theoremstyle{remark}
\newtheorem{remark}[subsubsection]{Remark}
\numberwithin{equation}{section}
\theoremstyle{definition}
\newtheorem{definition}[subsubsection]{Definition}
\def\eg{\emph{e.g.}}
\gdef\mnote#1{\marginpar{\footnotesize
 \tolerance\@M\spaceskip2.6\p@ plus10\p@ minus.9\p@\rm#1}}}
\def\Dg:{\endgraf{\bf Dg:\enspace}\ignorespaces}
\let\bold\mathbf
\newcommand{\be}{\begin{equation}}
\newcommand{\ee}{\end{equation}}
\let\curve=B  
\def\rel(#1):#2\endrel{$$
 \hbox to\displaywidth{$(#1):$\hfill$\displaystyle#2$\hfill}$$}
\def\sseat@@#1\endss@@{}
\def\sssub@{_{\the\count@}\ssnext@}
\def\ssother@#1{\def\ss@@{#1\ssnext@}}
\def\ssnext@#1{\ifcat
 A\noexpand#1\def\ss@@{\bold#1\afterassignment\sssub@\count@}\else
 \ifx +#1\oplus\let\ss@@\ssnext@\else
 \ifx .#1\let\ss@@\sseat@@\else
 \ifcat _\noexpand#1\def\ss@@##1{_{##1}\ssnext@}\else
 \ifcat ^\noexpand#1\def\ss@@##1{^{##1}\ssnext@}\else
 \ssother@{#1}\fi\fi\fi\fi\fi\ss@@}
\def\singset@#1\endss@@{\edef\ss@@{#1}\expandafter\ssnext@\ss@@.\endss@@}
\def\singset#1{\ifmmode\singset@#1\endss@@\else$\singset@#1\endss@@$\fi}
\def\SSet#1\endSSet{\singset{#1}}
\def\cN{\mathcal{N}}
\def\s{\mathbf{S}}
\def\disc{\operatorname{disc}}
\def\Br{\operatorname{Br}}
\def\Aut{\operatorname{Aut}}
\def\det{\operatorname{det}}
\let\PLUS+
\begin{document}

\title{Classification of Simple Quartics up to Equisingular Deformation }

\author[\c{C}.~G\"{u}ne\c{s} ~Akta\c{s} ]{\c{C}\.{i}sem G\"{u}ne\c{s} Akta\c{s}}
\address{
 Department of Mathematics, Bilkent University\\
 06800 Ankara, Turkey} \email{cisem@fen.bilkent.edu.tr}

\subjclass[2000]{Primary 14J28; Secondary  14J10, 14J17}

\keywords{Complex quartic, singular quartic, $K3$-surface, simple singularity}

\date{}

\dedicatory{}

\begin{abstract}

We study complex spatial quartic surfaces with simple singularities up to equisingular deformations; as a first step, give a complete equisingular deformation classification of the so-called non-special simple quartic surfaces.

\end{abstract}

\maketitle

\section{Introduction}\label{introduction}
\subsection{Motivation}
Throughout the paper, all algebraic varieties are over the field $\mathbb{C}$ of complex numbers. A \emph{quartic } is a surface in $\mathbb{P}^3$ of degree $4$. We confine ourselves to \textit{simple} quartics only, \textit{i.e.}, those with $\mathbf{A}$--$\mathbf{D}$--$\mathbf{E}$ type singularities (see \S 3.1). Two such quartics are said to be \textit{equisingular deformation equivalent} if they belong to the same deformation family in which the total Milnor number stays constant.

The theory of simple spatial quartics is very similar to theory of simple plane sextics: both are closely related to $K3$-surfaces. It is well known that the minimal resolution of singularities of a simple  quartic $X\subset\mathbb{P}^3$ is a $K3$-surface. By using the global Torelli theorem for $K3$-surfaces \cite{K3}  and the surjectivity of the period map \cite{periodmap} the equisingular deformation classification of simple quartics can be reduced to the study of a certain arithmetic question about lattices. The corresponding counterpart for plane sextics is covered by  A.~Akyol and A.~Degtyarev~\cite{Alex2} (see also \cite{Alex1}), who completed the equisingular deformation classification of irreducible singular plane sextics.

Found in the literature are a great number of papers dealing with quartics in $\mathbb{P}^3$ and based on the theory of $K3$-surfaces and  V. V. Nikulin's results \cite{Niku2} on lattice extensions. For example, T. Urabe~\cite{Urabe2, Urabe1} listed  (in terms of perturbations of Dynkin graphs) all sets of singularities with the total Milnor number $\mu\leq 17$ that are realized by  simple quartics. He also showed in \cite{Urabe2} that $\mu\leq19$ for a simple quartic. Note that the classification of non-simple quartics with isolated singularities (which is not related to $K3$-surfaces) is complete: a complete list and a description in terms of lattice embeddings are found in A. Degtyarev~\cite{Alex3, Alex4}, and a description of some realizable sets of singularities in terms of Dynkin diagrams is found in T. Urabe~\cite{Urabe4,Urabe3}.

Also worth mentioning are various $K3$-related deformation classification problems dealing with real surfaces and other polarizations; see, e.g., \cite{AIK, Itemberg, Niku3} and the survey \cite{AK} for further references. Specifically, real quartics in $\mathbb{P}^3$ have been addressed by V.~Kharlamov~\cite{Kharlamov} (the classification of nonsingular real quartics up to rigid isotopy) and A.~Degtyarev, I.~Itenberg~\cite{AI} (arrangements of the ten nodes of a generic real determinantal quartic).


\subsection{Principal results}
In this paper we confine ourselves  to the so-called \emph{non-special} quartics (the precise definition is too technical to be stated here and we refer \S \ref{classquartics}). The counterpart of this notion in the realm of plane sextics are irreducible sextics admitting no dihedral coverings, \emph{cf}. \cite{Alex2}.  As yet another motivation, we have the following geometric characterization.
\begin{theorem}\label{motivation}
 A simple quartic $X\subset\mathbb{P}^3$ is non-special if and only if
 \begin{align*}
        H_1( X\smallsetminus(\operatorname{Sing}X\cup H))=0,
 \end{align*}
 where $\operatorname{Sing}X$ is the set of the singular points of $X$ and $H$ is a generic hyperplane section of $X$.
\end{theorem}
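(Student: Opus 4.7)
My plan is to lift the computation of $H_1$ to the minimal resolution $\tilde X\to X$, which is a simply connected $K3$-surface, and then reduce $H_1$ of the resulting complement to a lattice quotient that should, by definition, vanish precisely in the non-special case. Let $\sigma\colon\tilde X\to X$ be the minimal resolution; because $X$ has only simple singularities, $\tilde X$ is $K3$. Since $H$ is generic, it avoids $\operatorname{Sing}X$ and is smooth, so its strict transform $\tilde H\subset\tilde X$ is a smooth curve disjoint from the exceptional divisor $E:=\sigma^{-1}(\operatorname{Sing}X)$. Hence $\sigma$ restricts to a diffeomorphism
\[
  \tilde X\smallsetminus(E\cup\tilde H)\;\xrightarrow{\cong}\;X\smallsetminus(\operatorname{Sing}X\cup H),
\]
so I may work with $D:=E\cup\tilde H$, a simple normal crossings divisor on the simply connected surface $\tilde X$.

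Next, I would use the long exact sequence of the pair $(\tilde X,\tilde X\smallsetminus D)$ together with Poincar\'e--Lefschetz duality $H_2(\tilde X,\tilde X\smallsetminus D)\cong H^2(D)=\mathbb{Z}^N$, where $N=1+\mu$ is the number of irreducible components of $D$ (the identification follows by Mayer--Vietoris since $D$ is a nodal curve whose components are smooth). Simple connectivity of $\tilde X$ yields
\[
  H_1(\tilde X\smallsetminus D;\mathbb{Z}) \;\cong\; \operatorname{coker}\!\bigl(L \to \mathbb{Z}^N,\;\alpha\mapsto(\alpha\cdot[D_i])_i\bigr),
\]
with $L:=H_2(\tilde X;\mathbb{Z})$ the unimodular $K3$-lattice. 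The classes $[D_i]$ span the sublattice $S:=\mathbb{Z}h\oplus\Sigma$, where $h:=[\tilde H]$ satisfies $h^2=4$ and $\Sigma$ is the ADE root lattice of $\operatorname{Sing}X$ (orthogonality follows because $\tilde H$ is disjoint from $E$). By unimodularity of $L$ the above map is identified with the restriction $L\to S^\vee$, whose cokernel is $\operatorname{Ext}^1(L/S,\mathbb{Z})$, non-canonically isomorphic to $\operatorname{tors}(L/S)=\tilde S/S$, where $\tilde S$ is the primitive hull of $S$ in $L$.

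To close the loop, I would appeal to the definition of \emph{non-special} in \S\ref{classquartics}, which should amount to primitivity of the sublattice $S=\mathbb{Z}h\oplus\Sigma$ in $L$ (equivalently, $\tilde S/S=0$); combining with the previous step gives the claimed equivalence. The main obstacle is precisely this last identification: the author's definition is admittedly technical, so one must translate it into (or check its equivalence with) primitivity of $\mathbb{Z}h\oplus\Sigma$, making sure no extra finite overlattice---of the sort that would appear under a dihedral-type condition in the plane-sextic analogue---sneaks in. Once that is settled the topological computation of $H_1$ itself is entirely routine.
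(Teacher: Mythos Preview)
Your proposal is correct and follows essentially the same route as the paper: pass to the minimal resolution $\tilde X$, use Poincar\'e--Lefschetz duality on the pair, and identify $H_1$ of the complement with the torsion $\tilde{\mathbf{S}}_h/\mathbf{S}_h$ via an $\operatorname{Ext}$ computation. The only cosmetic difference is that you use the homology exact sequence of $(\tilde X,\tilde X\smallsetminus D)$ together with $H_2(\tilde X,\tilde X\smallsetminus D)\cong H^2(D)$, whereas the paper uses the dual form, namely the cohomology sequence of $(\tilde X,E\cup H)$ and $H_1(\tilde X\smallsetminus(E\cup H))\cong H^3(\tilde X,E\cup H)$; both unwind to the same lattice cokernel. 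Your one stated worry is moot: the paper's Definition~\ref{nonspecial} says \emph{verbatim} that $X$ is non-special iff $\mathbf{S}_h\subset\mathbf{L}$ is primitive, so no further translation is needed.
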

This theorem is proved in \S\ref{proof.motivation}.

A set of simple singularities can be identified with a root system, \textit{i.e.}, a negative definite lattice generated by vectors of square $-2$ (see Dufree~\cite{Dufree} and \S\ref{QuarticsandK3}). By a \textit{perturbation} of a set of simple singularities $\s$ we mean any set of simple singularities $\s'$ whose Dynkin graph is an induced subgraph of that of $\s$ (see \S\ref{proof.maintheorem}). Recall that for a simple quartic $X\subset\mathbb{P}^3$, one has $\mu(X)\leq19$ (see \eg, \cite{Urabe2}); $X$ is called \emph{maximizing} if $\mu(X)=19$.

Denote by $\mathcal{M}(\s)$ the equisingular stratum of simple quartics with a given set of singularities $\s$.  A connected component $\mathcal{D}\subset\mathcal{M}(\s)$ is called \textit{real} if it is preserved as a set under the complex conjugation map $\operatorname{conj}:\mathbb{P}^3\rightarrow\mathbb{P}^3$. Clearly, this property is independent of the choice of coordinates in $\mathbb P^3$, and all components of $\mathcal{M}(\s)$ split into real and pairs of complex conjugate ones.

Our principal result is a complete description of the equisingular strata $\mathcal{M}_1(\s)$ of non-special simple quartics.
\begin{table}\label{maxtable}
\caption{\small The space $\mathcal{M}_1(\s)$ with $\mu(\s)=19$}
\hbox to\hsize{\hss\vtop{\halign{$\singset{#}$\hss\quad&\hss$#$\hss\cr
\noalign{\hrule\vspace{2pt}}
\omit\strut Singularities\hss&(r,c)\cr
\noalign{\hrule\vspace{2pt}}
2E8 + A2 + A1 & (1, 0) \cr
E8 + E7 + A4 & (1, 0) \cr
E8 + E6 + D5 & (1, 0) \cr
E8 + E6 + A4 + A1 & (1, 0) \cr
E8 + D7 + 2A2 & (1, 0) \cr
E8 + A10 + A1 & (1, 0) \cr
E8 + A9 + A2 & (1, 0) \cr
E8 + A6 + A5 & (1, 0) \cr
E8 + A6 + A4 + A1 & (1, 0) \cr
E8 + A6 + A3 + A2 & (0, 1) \cr
E8 + 2A4 + A2 + A1 & (1, 0) \cr
E7 + E6 + A6 & (1, 0) \cr
E7 + A12 & (1, 1) \cr
E7 + A10 + A2 & (0, 1) \cr
E7 + A8 + A4 & (2, 0) \cr
E7 + 2A6 & (0, 1) \cr
E7 + A6 + A4 + A2 & (1, 0) \cr
2E6 + D7 & (1, 0) \cr
E6 + D13 & (1, 0) \cr
E6 + D9 + A4 & (1, 0) \cr
E6 + A13 & (1, 0) \cr
E6 + A12 + A1 & (1, 0) \cr
D15 + 2A2 & (1, 0) \cr
D11 + A6 + A2 & (0, 1) \cr
D9 + A6 + 2A2 & (1, 0) \cr
D7 + A10 + A2 & (0, 1) \cr
D7 + 2A6 & (0, 1) \cr
D7 + A6 + A4 + A2 & (0, 1) \cr
D7 + 2A4 + 2A2 & (1, 0) \cr
\crcr}}\qquad\qquad
\vtop{\halign{$\singset{#}$\hss\quad&\hss$#$\hss\cr
\noalign{\hrule\vspace{2pt}}
\omit\strut Singularities\hss&(r,c)\cr
\noalign{\hrule\vspace{2pt}}
A18 + A1 & (1, 1) \cr
A17 + A2 & (1, 1) \cr
A16 + A2 + A1 & (1, 0) \cr
A15 + 2A2 & (0, 1) \cr
A14 + A5 & (0, 2) \cr
A14 + A3 + A2 & (0, 2) \cr
A13 + A6 & (0, 2) \cr
A13 + A4 + A2 & (1, 0) \cr
A12 + A6 + A1 & (1, 1) \cr
A12 + A5 + A2 & (1, 1) \cr
A12 + A4 + A2 + A1 & (0, 1) \cr
A12 + A3 + 2A2 & (2, 0) \cr
A11 + A6 + A2 & (0, 2) \cr
A10 + A9 & (1, 1) \cr
A10 + A8 + A1 & (0, 1) \cr
A10 + A7 + A2 & (0, 2) \cr
A10 + A6 + A3 & (0, 2) \cr
A10 + A6 + A2 + A1 & (1, 0) \cr
A10 + A5 + A4 & (1, 0) \cr
A10 + A4 + A3 + A2 & (0, 1) \cr
A9 + A8 + A2 & (1, 1) \cr
A9 + A6 + 2A2 & (1, 0) \cr
A8 + A6 + A5 & (1, 1) \cr
A8 + A6 + A4 + A1 & (0, 1) \cr
A8 + A6 + A3 + A2 & (0, 3) \cr
A7 + 2A6 & (0, 2) \cr
A7 + A6 + A4 + A2 & (0, 1) \cr
2A6 + A5 + A2 & (2, 0) \cr
2A6 + A4 + A2 + A1 & (0, 1) \cr
A6 + 2A4 + A3 + A2 & (2, 0) \cr
\crcr}}\hss} 
\end{table}
\begin{table}\label{extremaltable}
\caption{\small Extremal sets of singularities with $\mu(\s)=18$ }
\hbox to\hsize{\hss\vtop{\halign{$\singset{#}$\hss\quad&\hss$#$\hss\cr
E8 + D10 \cr
E8+ D9 + A1 \cr
2E7 +2A2 \cr
E7 + D11 \cr
E7 + D9 + A2 \cr
D18 \cr
D17 + A1 \cr
\crcr}}\qquad
\vtop{\halign{$\singset{#}$\hss\quad&\hss$#$\hss\cr
D14 + A4 \cr
D10 + A8 \cr
D10 + 2A4  \cr
2D9 \cr
D9 + A8 + A1 \cr
D6 + 3A4 \cr
\crcr}}\qquad
\vtop{\halign{$\singset{#}$\hss\quad&\hss$#$\hss\cr
2D5 + A8 \cr
2D5 + 2A4 \cr
D5 + A9 + A4 \cr
D5 + A8 + A5  \cr
D5 + A5 + 2A4 \cr
2A9 \cr
\crcr}}\hss} 
\end{table}

\begin{theorem}\label{maintheorem}
A set of singularities $\s$ is realizable as the set of singularities of a non-special simple quartic if and only if $\s$ can be obtained by a perturbation from one of those listed in Tables $1$ and $2$. The numbers $(r,c)$ of, respectively, real and pairs of complex conjugate components of the strata $\mathcal{M}_1(\s)$ with $\mu(\s)=19$ are shown in Table $1$. If $\s$ is one of
\begin{align*}
  \mathbf{D}_6\oplus2\mathbf{A}_6,\quad\mathbf{D}_5\oplus2\mathbf{A}_6\oplus\mathbf{A}_1,\quad2\mathbf{A}_7\oplus2\mathbf{A}_2,
  \quad3\mathbf{A}_6,\quad2\mathbf{A}_6\oplus2\mathbf{A}_3\,
  \end{align*}
then $\mathcal{M}_1(\s)$ consists of two complex conjugate components; in all other cases, the stratum $\mathcal{M}_1(\s)$ is connected.
\end{theorem}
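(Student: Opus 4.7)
My plan is to reduce the classification problem to an arithmetic question about lattice extensions, carry out an explicit enumeration in the maximizing case $\mu=19$, and then propagate the answer to all other $\s$ by a perturbation argument.

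First I would set up the standard K3-theoretic dictionary. For a simple quartic $X\subset\Bbb P^3$ the minimal resolution $\tilde X$ is a K3 surface carrying a distinguished class $h\in\Pic\tilde X$ with $h^2=4$ (the pull-back of a hyperplane section) together with a negative definite root lattice $\Sigma$ generated by the exceptional $(-2)$-curves, orthogonal to $h$. The primitive hull $\CL\subset\bold L$ of $\Z h\oplus\Sigma$ inside the K3 lattice $\bold L=2\bold E_8\oplus 3\bold U$ is the essential invariant. By the global Torelli theorem \cite{K3} and the surjectivity of the period map \cite{periodmap}, combined with Nikulin's theorem on primitive lattice extensions, the equisingular deformation classes of simple quartics with singularities $\s$ are in bijection with orbits of admissible triples (root-lattice embedding, primitive hull, orientation of the transcendental $2$-plane) under an arithmetic group action. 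Non-speciality, as defined in \S\ref{classquartics}, cuts out the subfamily $\mathcal M_1(\s)\subset\mathcal M(\s)$ by imposing a specific condition on the discriminant form of $\CL$.

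Second, I would treat the maximizing case $\mu=19$. Here the transcendental lattice $T=\CL^\perp$ has rank $2$ and its genus is constrained by $\CL$, so there are only finitely many candidates. One runs through all root systems $\Sigma$ of rank $19$ and, for each, enumerates the non-special primitive extensions $\Z h\oplus\Sigma\subset\CL\subset\bold L$ that actually come from a quartic --- geometric realizability being encoded by the absence of $(-2)$-classes in $\CL$ orthogonal to $h$ but outside $\Sigma$ and of isotropic classes pairing to $1$ with $h$. For each surviving $\CL$, the integers $r$ and $c$ are read off from the cokernel of the natural map $O(\CL)\times O(T)\to O(\disc\CL)$ together with the action of an orientation-reversing isometry of $T$: a component is real precisely when it is stabilized by such an isometry, otherwise it appears in complex conjugate pairs. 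The outcome of this enumeration is Table $1$.

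Third, for $\mu\le 18$ I would invoke the perturbation principle. Geometrically, any Dynkin-graph perturbation $\s'\subset\s$ is realized by a small equisingular deformation that smooths some singular points while keeping the ambient $\CL$-configuration unchanged; lattice-theoretically this corresponds to replacing $\Sigma$ by a root sublattice of the same primitive hull. Hence every non-special $\s'$ is obtained from some maximal non-special $\s$ by perturbation, and the realizability statement of the theorem follows once Table $2$ --- the additional maximal sets not dominated by $\mu=19$ --- is produced by a lattice enumeration identical in spirit to the $\mu=19$ analysis but restricted to configurations admitting no enlargement.

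Finally, to count components for $\mu<19$ I would track how the orbit structure from Tables $1$ and $2$ behaves under perturbation: removing a node of $\Sigma$ generically enlarges the isometry group acting on the discriminant data, so components merge and a single component remains. The claim that exactly the five listed sub-maximal sets retain two complex conjugate components amounts to showing, in each of those five cases, that the unique maximizing ancestor has $(r,c)=(0,1)$ or $(0,2)$ and that the prescribed node removal creates no new isometry swapping the conjugate pair. This is verified by direct inspection of the discriminant forms and their stabilizers. The main obstacle, in my view, is precisely this combination of the exhaustive finite but sizeable lattice enumeration of Tables $1$ and $2$ and the careful tracking of real versus complex conjugate components through perturbation; both steps rest on Nikulin's discriminant-form techniques and in practice require computer assistance.
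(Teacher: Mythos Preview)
Your setup of the K3 dictionary and the treatment of the maximizing case $\mu=19$ via rank-$2$ transcendental lattices are in line with the paper. The realizability statement (Tables~1 and~2) is also correctly reduced to Nikulin's existence theorem plus a perturbation argument, essentially as the paper does it.

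The genuine gap is in your final step, the component count for $\mu\le18$. You propose to obtain it by \emph{tracking perturbation from maximizing ancestors}, arguing that removing a node enlarges the isometry group and hence merges components, and that the five exceptional $\mathbf{S}$ are those whose ``unique maximizing ancestor has $(r,c)=(0,1)$ or $(0,2)$'' with no new swapping isometry appearing. This does not work as stated. A given $\mathbf{S}'$ with $\mu<19$ is in general a perturbation of \emph{many} maximizing $\mathbf{S}$ (and of the $\mu=18$ sets in Table~2, which have no maximizing ancestor at all), and there is no canonical map $\pi_0(\mathcal M_1(\mathbf{S}))\to\pi_0(\mathcal M_1(\mathbf{S}'))$ whose surjectivity you could exploit; the transcendental lattice jumps from positive definite rank~$2$ to indefinite rank~$\ge3$, so the arithmetic governing the two problems is of a different nature. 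Concretely, none of the five exceptional sets $\mathbf{D}_6\oplus2\mathbf{A}_6$, $\mathbf{D}_5\oplus2\mathbf{A}_6\oplus\mathbf{A}_1$, $2\mathbf{A}_7\oplus2\mathbf{A}_2$, $3\mathbf{A}_6$, $2\mathbf{A}_6\oplus2\mathbf{A}_3$ is singled out by your criterion; what distinguishes them is a failure of surjectivity of a specific homomorphism computed intrinsically from $\mathbf{S}$, not a property inherited from any ancestor.

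The paper handles $\mu\le18$ directly, one $\mathbf{S}$ at a time, via Miranda--Morrison theory: since $\mathbf{T}=\mathbf{S}_h^\perp$ is indefinite of rank~$\ge3$, there is a finite $\mathbb F_2$-module $E(\mathbf{T})$ and a homomorphism $d^\perp\colon O(\mathbf{S})\to E(\mathbf{T})$ such that $\pi_0(\mathcal M_1(\mathbf{S})/\mathrm{conj})\cong\operatorname{Coker}d^\perp$. One first shows $d^\perp$ is onto for all $2872$ sets (trivially for the $2830$ with $E(\mathbf{T})=1$, by explicit reflection computations for the rest), so $\mathcal M_1(\mathbf{S})/\mathrm{conj}$ is always connected. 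Then the real/complex dichotomy is decided by the refined map $d^\perp_+\colon O(\mathbf{S})\to E^+(\mathbf{T})$; the five listed $\mathbf{S}$ are exactly those where $d^\perp_+$ fails to be onto. Your proposal is missing this Miranda--Morrison machinery, which is the actual engine of the non-maximizing part of the proof.
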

Theorem \ref{maintheorem} is proved in \S \ref{proof.maintheorem}.

\subsection{Contents of the paper}
Our principal result, Theorem \ref{maintheorem}, is proved by a reduction to an arithmetical problem \cite{AI} (\emph{cf.} also \cite{Alex1}), followed by Nikulin's theory of lattice extensions via discriminant groups \cite{Niku2}, Nikulin's existence theorem \cite{Niku2}, and Miranda--Morrison theory \cite{MM1, MM2,MM3} computing the genus groups and a few other bits missing in \cite{Niku2} in the case of indefinite lattices.

In $\S2$, based on Nikulin's work \cite{Niku2}, we recall the basic notions and results about integral lattices, discriminant forms and lattice extensions; then, we outline the fundamentals of Miranda-Morison's theory \cite{MM3} which are used in \S \ref{proof.maintheorem}. In $\S3$, we discuss the relation between simple quartics and $K3$-surfaces, explain the notion of abstract homological type, and recall the reduction of the classification problem to the arithmetical classification of abstract homological types. Finally, $\S4$ is devoted to the proofs of our principal results: the proof of Theorem\ref{maintheorem} is purely homotopy theoretical, whereas that of Theorem \ref{maintheorem} depends essentially on the auxiliary material presented in \S\ref{preliminaries} and \S\ref{quartics}.
\subsection{Acknowledgements}
I would like to express my gratitude to my advisor Alex Degtyarev for attracting my attention to the problem, motivating discussions, encouragement and infinite patience. I am also thankful to him for sharing his results (stated in Table $1$) about the moduli space of maximizing non-special simple quartics .

\section{Preliminaries}\label{preliminaries}

\subsection{Finite quadratic forms}\label{finite.quadratic.forms}

A \textit{finite quadratic form} is a finite abelian group $\mathcal{L}$ equipped with a map $q\colon\mathcal{L}\rightarrow \mathbb{Q}/2\mathbb{Z}$. satisfying $q(x+y)=q(x)+q(y)+2b(x,y)$ and $q(nx)=n^2x$ for all $x,y\in \mathcal{L}$, where $b\colon\mathcal{L}\otimes\mathcal{L}\rightarrow\mathbb{Q}/\mathbb{Z}$ is a symmetric bilinear form (which is determined by $q$). To reduce the notation we write $x^2$ for $q(x)$ and $x\cdot y$ for $b(x,y)$. For a prime $p$, let $\mathcal{L}_p:=\mathcal{L}\otimes\mathbb{Z}_p$, which is called the \emph{$p$-primary part} of $\mathcal{L}$. Any finite quadratic form $\mathcal{L}$ can be written as an orthogonal sum of its $p$-primary components $\mathcal{L}_p$, \emph{i.e.}, $\mathcal{L}=\bigoplus_p\mathcal{L}_p$ where the summation runs over all primes $p$. Denote by  $\ell(\mathcal{L})$ the minimal number of generators of $\mathcal{L}$.

Consider a fraction $\frac{m}{n}\in \mathbb{Q}/2\mathbb{Z}$ with $g.c.d(m,n)= 1$ and $mn=0$ mod $2$. By $\langle\frac{m}{n}\rangle$, we denote the finite non-degenerate (see \S\ref{integral.lattices}) quadratic form on $\mathbb{Z}/n\mathbb{Z}$ generated by an element of square $\frac{m}{n}$ and of order $n$. For an integer $k\geq 1$, let $\mathcal{U}(2^k)$ and $\mathcal{V}(2^k)$ be the quadratic forms on $\mathbb{Z}/2^k\mathbb{Z}\oplus\mathbb{Z}/2^k\mathbb{Z}$ defined by the matrices
\begin{align*}
    \mathcal{U}(2^k)=\left(
             \begin{array}{cc}
               0 & \frac{1}{2^k} \\
               \frac{1}{2^k} & 0 \\
             \end{array}
           \right)\qquad \quad\mbox{and}\qquad \quad\mathcal{ V}(2^k)=\left(
             \begin{array}{cc}
              \frac{1}{2^{k-1}} & \frac{1}{2^k} \\
               \frac{1}{2^k} & \frac{1}{2^{k-1}} \\
             \end{array}\right).
\end{align*}
Nikulin \cite{Niku2} showed that any finite quadratic form can be written as an orthogonal sum of cyclic summands of the form $\langle\frac{m}{n}\rangle$ and copies of $\mathcal{U}(2^k)$ and $\mathcal{V}(2^k)$.

The \textit{Brown invariant} of a finite quadratic form
$\mathcal{L}$ is the residue $\Br\mathcal{L}\in
\mathbb{Z}/8\mathbb{Z}$ defined by the Gauss sum
$$\exp\left(\frac{1}{4} i\pi \operatorname{Br}\mathcal{L}\right)=
|\mathcal{L}|^{-\frac{1}{2}}\sum\limits_{i\in \mathcal{L}} \exp(i\pi x^{2}).$$ The Brown invariants of indecomposable  $p$-primary blocks are as
follows:

$\Br\langle \frac{2a}{p^{2s-1}}
\rangle=2(\frac{a}{p})-(\frac{-1}{p})-1$,
~~~~$\Br\langle\frac{2a}{p^{2s}}\rangle=0$~~~~(for $p$ odd, $s\geq 1$ and
$\operatorname{g.c.d.}(a,p)=1$),

$\Br\langle\frac{a}{2^{k}} \rangle=a+\frac{1}{2} k(a^{2}-1) \mod
8$ ~~~~~~(for $k\geq 1$ and odd $a\in \mathbb{Z}$),

$\Br\mathcal{U}_{2^{k}}=0$,

$\Br\mathcal{V}_{2^{k}}=4k \mod
8$ ~~~~~~(for all $k\geq 1$).

A finite quadratic form is called \emph{even} if $x^2=0$ mod $\mathbb{Z}$ for all elements $x\in\mathcal{L}$ of order two; otherwise it is called \emph{odd}. This definition implies that a quadratic form is odd if and only if it contains $\langle\pm\frac{1}{2}\rangle$ as an orthogonal summand.

\subsection{Integral lattices and discriminant forms}\label{integral.lattices}

An \emph{(integral) lattice} is a free abelian group $L$ of finite rank with a symmetric bilinear form $b\colon L\otimes L\rightarrow \mathbb{Z}$. For short, we use the multiplicative notation $x\cdot y$ for $b(x,y)$ and $x^2$ for $b(x,x)$. A lattice $L$ is called \emph{even} if $a^2$ is an even integer for all $a\in L$. It is called \emph{odd} otherwise. The determinant $\operatorname{det}(L)$ is defined to be the determinant of the Gram matrix of $b$ in any basis of $L$. Since the transition matrix between any two integral bases has determinant $\pm 1$, $\operatorname{det}(L)\in \mathbb{Z}$ is well defined. A lattice $L$ is called \emph{non-degenerate} if $\operatorname{det}(L)\neq 0$; it is called \emph{unimodular} if $\det(L)=\pm 1$.

From now on all lattices considered are even and non-degenerate.

Given a lattice $L$, the form $b:L\otimes L \rightarrow \mathbb{Z}$ can be extended by linearity to a form $(L\otimes\mathbb{Q})\otimes_{\mathbb{Q}}(L\otimes\mathbb{Q})\rightarrow \mathbb{Q}$. If $L$ is non-degenerate, the dual group $L^*:= \operatorname{Hom}(L,\mathbb{Z})$ can be identified with the subgroup
\begin{align*}
    \bigl\{x\in L\otimes\mathbb{Q}\ \bigm|\ \text{$x\cdot y\in \mathbb{Z}$ for all $x\in L$}  \bigr\}
\end{align*}

Since the original bilinear form $b$ on $L$ is integer valued, $L$ is a finite index subgroup of its dual. The quotient $L^*/L$ is called the \emph{discriminant group} of $L$ and is denoted by $\mathcal{L}$ or $\operatorname{disc} L$. If $\{e_1,e_2,\ldots e_n\}$ is a basis set for $L$ and $ \{e_1^*,e_2^*, \ldots,e_n^*\}$ is the dual basis for $L^*$, then the Gram matrix $[e_i\cdot e_j]$ is exactly the matrix of the homomorphism $\varphi: L\rightarrow L^*$, $x\mapsto[y\mapsto x\cdot y]$. Hence one has $|\mathcal{L}|=|\det (L)|$.  Note that $x\cdot y \in\mathbb{Z}$ whenever $x\in L$ or $y\in L$. Thus, $\mathcal{L}$ inherits from $L\otimes \mathbb{Q}$ a non-degenerate symmetric bilinear form $b_{\mathcal{L}}: \mathcal{L}\otimes\mathcal{L}\rightarrow\mathbb{Q}/\mathbb{Z}$; it is  called the \emph{discriminant form}. If $L$ is even, this form $b_{\mathcal{L}}$ can be promoted to the quadratic extension $q_{\mathcal{L}}: \mathcal{L}\rightarrow\mathbb{Q}/2\mathbb{Z}$, $x\bmod L\mapsto x^2 \bmod 2\mathbb{Z}$. Hence, the discriminant form of an even lattice is a finite quadratic form. Accordingly, given a prime $p$, we use the notation $\operatorname{disc}_p L$ or $\mathcal{L}_p$ for the $p$-primary part of $\mathcal{L}$, \emph{i.e.}, $\mathcal{L}_p =\mathcal{L}\otimes \mathbb{Z}_p$. Each discriminant group $\mathcal{L}$ decomposes into orthogonal sum $\mathcal{L}=\bigoplus_p\mathcal{L}_p$ of its $p$-primary components.\\
\indent The \emph{signature} of a non-degenerate lattice $L$ is the pair $(\sigma_+,\sigma_-)$ of its positive and negative inertia indices. Two non-degenerate integral lattices are said to have the same \emph{genus} if their localizations over $\mathbb{R}$ and over $\mathbb{Q}_p$ are isomorphic. The following few statements give the relation between the genus of an even integral lattice and its discriminant form.

\begin{theorem}[Nikulin \cite{Niku2}]
The genus of an even integral lattice $L$ is determined by its signature $(\sigma_+ L, \sigma_- L)$ and discriminant form $\operatorname{disc} L$.
\end{theorem}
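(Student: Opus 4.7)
The plan is to work place by place: two non-degenerate even lattices $L, L'$ lie in the same genus if and only if $L\otimes\mathbb{R}\cong L'\otimes\mathbb{R}$ and $L\otimes\mathbb{Z}_p\cong L'\otimes\mathbb{Z}_p$ for every prime $p$. The archimedean factor is handled by Sylvester: a non-degenerate real quadratic form is classified up to isomorphism by its signature, so the pair $(\sigma_+L,\sigma_-L)$ already pins down $L\otimes\mathbb{R}$. What remains is to show that, for each prime $p$, the $\mathbb{Z}_p$-lattice $L\otimes\mathbb{Z}_p$ is determined up to isomorphism by its rank together with the $p$-primary component $\mathcal{L}_p$ of the discriminant form (the parity constraint is automatic since $L$ is even).

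For odd $p$ the argument is relatively soft. Every non-degenerate $\mathbb{Z}_p$-lattice is orthogonally diagonalizable as $\bigoplus_i\langle a_ip^{k_i}\rangle$, and two such diagonal lattices are $\mathbb{Z}_p$-isomorphic exactly when they have the same rank and the same determinant modulo squares in $\mathbb{Z}_p^\times$. One then verifies that $\mathcal{L}_p$ determines both the multiset of valuations $\{k_i\}$ (from the orders of its cyclic summands) and the class of $\prod_i a_i$ modulo squares (from the total Brown-type invariants recalled in \S\ref{finite.quadratic.forms}), so $\mathcal{L}_p$ recovers $L\otimes\mathbb{Z}_p$ uniquely.

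The main obstacle is the prime $p=2$, where even $\mathbb{Z}_2$-lattices do not diagonalize: besides the one-dimensional summands $\langle a/2^k\rangle$ one must allow the hyperbolic-type blocks $\mathcal{U}(2^k)$ and $\mathcal{V}(2^k)$, and moreover an even $\mathbb{Z}_2$-lattice may carry genuine isomorphism invariants beyond its discriminant form alone (for instance, orthogonal direct summing with a unimodular hyperbolic plane leaves $\mathcal{L}_2$ untouched but changes $L\otimes\mathbb{Z}_2$). Here I would follow Nikulin's route: classify the indecomposable even $\mathbb{Z}_2$-blocks, relate the Jordan decomposition of $L\otimes\mathbb{Z}_2$ to that of $\mathcal{L}_2$ block by block, and show that the residual ambiguity is controlled by the $2$-adic Hasse invariant $\epsilon_2(L)$.

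Finally, to remove that remaining $2$-adic slack, invoke Hilbert reciprocity $\prod_v\epsilon_v(L)=1$: the Hasse invariants at all finite odd primes are already determined by the corresponding $\mathcal{L}_p$ together with the rank, and $\epsilon_\infty(L)$ depends only on the signature; hence $\epsilon_2(L)$ is forced by $(\sigma_+L,\sigma_-L)$ and $\operatorname{disc}L$, completing the local classification at $p=2$. The harder half of the work is thus the $2$-adic Jordan analysis together with the careful bookkeeping of Brown invariants needed to make the reciprocity argument quantitatively correct; the odd-prime and archimedean parts are essentially bookkeeping on top of standard structure theorems.
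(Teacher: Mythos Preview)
The paper does not supply a proof; the theorem is simply quoted from Nikulin~\cite{Niku2}. Your outline is in fact Nikulin's own strategy (local analysis at each place, with a product formula to close the $2$-adic gap), so there is nothing to contrast against. There is, however, a genuine error in your odd-prime step.

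Your assertion that two diagonal $\mathbb{Z}_p$-lattices are isomorphic ``exactly when they have the same rank and the same determinant modulo squares'' is false: over $\mathbb{Z}_3$ the lattices $\langle1\rangle\oplus\langle9\rangle$ and $\langle3\rangle\oplus\langle3\rangle$ share rank and determinant class but are not isomorphic, as their discriminant groups already differ. The correct invariant of a $\mathbb{Z}_p$-lattice (odd $p$) is the full Jordan datum: for each scale $p^k$ the rank $n_k$ and the unit class $d_k\in\mathbb{Z}_p^\times/(\mathbb{Z}_p^\times)^2$. Now $\mathcal{L}_p$ recovers $(n_k,d_k)$ block by block for every $k\ge1$ --- in fact more than the single product $\prod_i a_i$ you cite --- but it carries no information about the unimodular constituent $k=0$, so your conclusion ``$\mathcal{L}_p$ recovers $L\otimes\mathbb{Z}_p$ uniquely'' is wrong as stated. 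The phenomenon you flag only at $p=2$ (adding a unimodular summand leaves the discriminant form untouched) occurs at every prime. The repair is to note that $n_0=\sigma_++\sigma_--\ell(\mathcal{L}_p)$ and that $d_0$ is forced by the global identity $\det L=(-1)^{\sigma_-}\lvert\mathcal{L}\rvert$; thus the signature enters essentially already at the odd primes, not only over~$\mathbb{R}$. With this correction the remainder of your plan, including the Hilbert-reciprocity argument for the residual $2$-adic invariant, is the right way to finish and matches Nikulin's proof.
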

The existence of an even integral lattice $L$ with a given signature is given by Nikulin's existence theorem (see Theorem \ref{Existence}).
\begin{theorem}[van der Blij \cite{Blij}]
For any non-degenerate even integral lattice $L$ one has $\operatorname{Br}\mathcal{L}=\sigma_+-\sigma_ -\mod 8$.
\end{theorem}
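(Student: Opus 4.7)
The plan rests on two principles---genus invariance and additivity---together with an explicit check on a finite list of building blocks.

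\textbf{Step 1.} Both sides of the identity are invariants of the genus of $L$: $\operatorname{Br}\mathcal{L}$ depends only on the discriminant form, and $\sigma_+-\sigma_-$ only on the signature, and by Nikulin's theorem quoted immediately above these two data determine the genus. Both quantities are also additive under orthogonal direct sums $L = L_1\oplus L_2$: signature additivity is immediate, and the Gauss sum that defines $\operatorname{Br}\mathcal{L}$ factors over the decomposition $\mathcal{L} = \mathcal{L}_1\oplus\mathcal{L}_2$. Hence the defect
\begin{equation*}
f(L) := \operatorname{Br}\mathcal{L} - (\sigma_+ L - \sigma_- L) \pmod{8}
\end{equation*}
is a genus invariant that is additive under orthogonal sums.

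\textbf{Step 2.} By the structural result of Nikulin recalled in \S\ref{finite.quadratic.forms}, every finite quadratic form is an orthogonal sum of cyclic blocks $\langle a/p^k\rangle$ and of the binary $2$-primary blocks $\mathcal{U}(2^k)$ and $\mathcal{V}(2^k)$. For each such block one can exhibit a concrete small-rank even lattice realizing it: a rank-one lattice of the shape $\langle\pm 2ap^k\rangle$ (with the sign chosen appropriately) for the cyclic blocks; the scaled hyperbolic plane with Gram matrix $\bigl(\begin{smallmatrix} 0 & 2^k \\ 2^k & 0 \end{smallmatrix}\bigr)$ for $\mathcal{U}(2^k)$; and a scaled $D_4$-type lattice of rank four for $\mathcal{V}(2^k)$. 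On each of these finitely many realizations the identity $f=0$ is then verified by direct computation against the block-wise Brown invariant formulas listed just above the statement.

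\textbf{Step 3.} Given an arbitrary even non-degenerate lattice $L$, assemble $L' := \bigoplus_i L_i$ from realizations of the blocks of $\mathcal{L}$, so that $\operatorname{disc} L'\cong\operatorname{disc} L$. Adding copies of the hyperbolic plane $U$ (signature $(1,1)$, trivial discriminant) and of $E_8$ (signature $(8,0)$, trivial discriminant)---each contributing $0\bmod 8$ to both sides---one stabilizes so that $L$ and $L'$ acquire the same signature. Then $L$ and $L'$ lie in the same genus by Nikulin's theorem, and hence
\begin{equation*}
f(L) = f(L') = \textstyle\sum_i f(L_i) = 0
\end{equation*}
by genus invariance, additivity, and the blockwise check.

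The expected main obstacle is the blockwise verification for the binary $2$-primary forms $\mathcal{V}(2^k)$: their Brown invariant $4k\bmod 8$ forces the realizing lattice to have signature difference $\equiv 4k\pmod 8$, which for odd $k$ cannot be achieved in rank two (where the difference lies in $\{-2,0,2\}$) and requires a rank-four realization; one must then verify carefully that the chosen lattice genuinely produces $\mathcal{V}(2^k)$ rather than $\mathcal{U}(2^k)$ or another $2$-primary form of the same order. A similar but milder issue arises for $2$-adic cyclic blocks of higher odd type.
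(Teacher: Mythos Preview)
The paper does not supply a proof of this statement; it is quoted as a classical result of van der Blij and used as a black box. So there is no argument in the paper to compare against.

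Your strategy---reduce to genus invariants, use additivity, and verify on explicit lattices realizing the indecomposable discriminant blocks---is the standard one and is sound in outline. One technical point deserves attention: a rank-one even lattice $\langle \pm 2ap^k\rangle$ has discriminant group $\mathbb{Z}/2|a|p^k$, which for odd $p$ always carries a nontrivial $2$-primary summand, so it does \emph{not} realize the bare block $\langle 2a/p^k\rangle$ as its full discriminant form. The same issue arises for $2$-adic cyclic blocks $\langle a/2^k\rangle$ with $a\equiv\pm3\bmod 8$. This is not fatal: one can either pass to rank-two realizations (for odd $p$, the even lattice with Gram matrix $\bigl(\begin{smallmatrix}2&1\\1&(1+p^k)/2\end{smallmatrix}\bigr)$ has determinant $p^k$ and realizes $\langle 2/p^k\rangle$, and the non-square class is handled similarly), or run an induction on the number of prime factors of $|\mathcal{L}|$ so that the extra primary parts introduced by a rank-one realization are already controlled. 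Once the realizations are corrected, Steps~1--3 go through exactly as you describe; your anticipated obstacle with $\mathcal{V}(2^k)$ is real but, as you note, is resolved by a rank-four model such as the rescaled $\mathbf{D}_4$.
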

We denote by $g(L)$ the set of all isomorphism classes of all non-degenerate even integral lattices with the same genus as $L$. Each set $g(L)$ is known to contain finitely many isomorphism classes.

Given a prime $p$, we define the \textit{determinant} $\operatorname{det_p}(\mathcal{ L})$ as the determinant of the matrix of the quadratic from on $\mathcal{L}_p$ in an appropriate basis (see \cite{Niku1} and \cite{Niku2} for details). Unless $p=2$ one has $\operatorname{det}_p(\mathcal{L})=u/|\mathcal{L}_p|$ where $u$ is a well defined element of $u\in\mathbb{Z}_p^\times/(\mathbb{Z}_p^\times)^2$. If $p=2$,  the determinant $\operatorname{det}_2(\mathcal{L})$ is well defined only if $\mathcal{L}_2$ is even.

\begin{theorem}[Nikulin \cite{Niku2}]{\label{Existence}}
Let $\mathcal{L}$ be a finite quadratic form
and let $\sigma_{\pm}$ be a pair of integers. Then, the following
four conditions are necessary and sufficient for the existence of
an even integral lattice $L$ whose signature is
$(\sigma_{+},\sigma_{-})$ and whose discriminant form is
$\mathcal{L}$:

\begin{enumerate}

\item $\sigma_{\pm} \geq 0$ and $\sigma_{+}+\sigma_{-} \geq
\operatorname{\ell} (\mathcal{L})$;

\item $\sigma_{+}-\sigma_{-}=\operatorname{Br}\mathcal{L}
\mod 8$;

\item for each $p\neq 2$, either
$\sigma_{+}+\sigma_{-}>\ell_{p}(\mathcal{L})$ or
$\operatorname{det}_p(\mathcal{L})\equiv (-1)^{\sigma_{-}}\cdot
|\mathcal{L}|\bmod (\mathbb{Z}_{p}^{*})^{2}$;

\item either $\sigma_{+}+\sigma_{-}>\ell_{2}(\mathcal{L})$, or
$\mathcal{L}_{2}$ is odd, or
$\operatorname{det}_2(\mathcal{L})\equiv \pm
|\mathcal{L}|\bmod (\mathbb{Z}_{2}^{*})^{2}$.
\end{enumerate}

\end{theorem}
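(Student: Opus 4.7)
The plan is to separately establish necessity and sufficiency. Necessity of (1) is immediate: $\sigma_\pm\geq 0$ by definition of signature, and since $\mathcal{L}=L^*/L$ is a quotient of $L^*\cong\mathbb Z^{\sigma_++\sigma_-}$, its minimal number of generators $\ell(\mathcal L)$ is bounded by $\sigma_++\sigma_-$. Condition (2) is exactly van der Blij's formula, already stated above. Conditions (3) and (4) are local statements: over $\mathbb Z_p$ the localization $L_p:=L\otimes\mathbb Z_p$ has a well-defined determinant in $\mathbb Z_p^\times/(\mathbb Z_p^\times)^2$ (subject to the usual even/odd subtlety at $p=2$), and a direct unfolding of the definition of $\det_p$ in \S\ref{integral.lattices} gives $\det(L_p)\equiv\det_p(\mathcal L_p)\cdot|\mathcal L_p|$ modulo squares. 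Comparing with the global identity $|\det(L)|=|\mathcal L|$ and tracking the sign via the real signature produces the congruences in (3) and (4); if the rank exceeds $\ell_p(\mathcal L)$ one can absorb the ambiguity into a unimodular orthogonal summand, which is why the determinant restriction can be waived in that case.

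For sufficiency, the strategy is to prescribe the genus locally and then invoke the local--global principle for quadratic forms. Using the classification of finite quadratic forms recalled in \S\ref{finite.quadratic.forms}, write $\mathcal L=\bigoplus_p\mathcal L_p$ and further decompose each $\mathcal L_p$ into a Jordan sum of the elementary blocks $\langle m/p^k\rangle$, $\mathcal U(2^k)$ and $\mathcal V(2^k)$. For each block I would exhibit a small even $\mathbb Z_p$-lattice realizing it: a rank one lattice for each cyclic summand and a rank two lattice for $\mathcal U(2^k)$ and $\mathcal V(2^k)$, obtained by clearing denominators in the given Gram matrices. Their orthogonal sum is a $\mathbb Z_p$-lattice of rank $\ell_p(\mathcal L)$ whose discriminant form is $\mathcal L_p$; one then pads with hyperbolic planes or copies of $E_8$ to reach rank $\sigma_++\sigma_-$ while adjusting the real inertia indices to $(\sigma_+,\sigma_-)$, which the Brown invariant condition (2) guarantees is consistent.

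Having produced compatible local data $\{L_p\}$ together with a real form of signature $(\sigma_+,\sigma_-)$, one next checks that this data assembles into a single quadratic space over $\mathbb Q$: by Hilbert reciprocity the product of local Hasse symbols must equal $+1$, and a direct calculation shows that conditions (3) and (4) are precisely what makes this happen. The Hasse--Minkowski theorem then supplies a rational quadratic form with the prescribed local invariants, and intersecting the local lattices $L_p$ inside their common $\mathbb Q$-span produces an even integral lattice $L$ of signature $(\sigma_+,\sigma_-)$ and discriminant form $\mathcal L$.

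I expect the delicate part to be the analysis at $p=2$. The dichotomy between even and odd $\mathcal L_2$ forces a case split both in the definition of $\det_2$ and in the construction of the realizing local lattice, since an odd elementary block can be realized only by an odd $\mathbb Z_2$-lattice, and compensating that oddness while preserving the global even structure requires a careful choice of unimodular summands. Threading the Brown invariant congruence through this compensation is the main bookkeeping hurdle; once the rank is large enough to absorb a hyperbolic plane, i.e.\ when $\sigma_++\sigma_->\ell_2(\mathcal L)$, the argument loosens and condition (4) becomes automatic.
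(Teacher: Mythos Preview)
The paper does not prove this theorem at all: it is quoted as a result of Nikulin~\cite{Niku2} and used as a black box in \S\ref{proof.maintheorem}. There is therefore no ``paper's own proof'' to compare against; your sketch is in fact an outline of Nikulin's original argument (local realization of each $p$-primary block, padding to the correct rank, and a Hasse--Minkowski/Hilbert-reciprocity step to globalize), which is the standard route.

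As a sketch your plan is sound, but one detail is misstated. You write that ``an odd elementary block can be realized only by an odd $\mathbb Z_2$-lattice''; this is false. The odd blocks $\langle\pm\tfrac12\rangle$ are precisely $\operatorname{disc}\mathbf A_1$ and $\operatorname{disc}\mathbf E_7$, both of which are even integral (and hence even $\mathbb Z_2$-) lattices. The genuine subtlety at $p=2$ is not an even/odd obstruction to realizing the block, but rather that $\det_2(\mathcal L)$ is only well defined when $\mathcal L_2$ is even, and that the $2$-adic Jordan decomposition is not unique, so matching the prescribed determinant class while keeping the global lattice even requires the finer bookkeeping encoded in condition~(4). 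Apart from this mischaracterization, the outline is correct.
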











\subsection{\textbf{Automorphisms of lattices}}\label{automorphisms.of.lattices}

An \emph{isometry} of integral lattices is a homomorphism of abelian groups preserving the forms. The group of auto-isometries of $L$ is denoted by $O(L)$. There is a natural homomorphism $d\colon O(L)\rightarrow \Aut(\mathcal{L})$, where $\Aut(\mathcal{L})$ denotes the group of automorphisms of $\mathcal{L}$ preserving the discriminant form $q$ on $\mathcal{L}$. Obviously, one has $\operatorname{Aut}(\mathcal{L})=\prod_p\operatorname{Aut}(\mathcal{L}_p)$, where the product runs over all primes. The restrictions of $d$ to the $p$-primary components are denoted by $d_p: O(L)\rightarrow\operatorname{Aut}(\mathcal{L}_p)$.

Given a vector $u$ in $L$ with $u\neq 0$, the \emph{reflection} against its orthogonal hyperplane is the automorphism
\begin{align*}
    r_u:& L\rightarrow L\\
       & x\mapsto x-2\frac{(x\cdot u)}{u^2}u
\end{align*}
The reflection $r_u$ is well-defined whenever $u\in(\frac{u^2}{2})L^*$. Note that $r_u^2=\operatorname{id}$, \emph{i.e.}, $r_u$ is an involution. Each image $d_p(r_u)\in \Aut (\mathcal{L}_p)$ is also a reflection (see \S \ref{reflections}). If $u^2=\pm 1$ or $u^2=\pm 2$, then the induced automorphism $d(r_u)$ is the identity.

\subsection{Lattice extensions}\label{lattice.extensions}

An even integral lattice $L$ containing even lattice $S$ called an \emph{extension} of $S$. As \emph{isomorphism} between two extensions $L_1\supset S$ and $L_2\supset S$ is an isometry between $L_1$ and $L_2$ taking $S$ to $S$. In particular, if the isomorphism $L_1\rightarrow L_2$ restricts to $\operatorname{id}$ on $S$, the extensions $L_1$ and $L_2$ are called \emph{strictly isomorphic}. For a given subgroup $A$ of O(S), we define \emph{$A$-isomorphisms} of extensions of $S$ as those which restrict to an element of $A$ on $S$.

Recall that $S$ is assumed non-degenerate, hence given a \emph{finite index}  extension $L\supset S$, one has $L\subset S^{\ast}$. Thus there are inclusions $S\subset L\subset L^*\subset S^*$ which imply $L/S\subset S^*/S=\mathcal{S}$. The subgroup $\mathcal{K}=L/S$ of $\mathcal{S}$ is called the \emph{kernel} of the finite index extension $L\supset S$. Since $L$ is an even integral lattice, the discriminant quadratic form on $\mathcal{S}$ restricts to zero on $\mathcal{K}$, \emph{i.e.,} $\mathcal{K}$ is isotropic.

\begin{proposition}[Nikulin \cite{Niku2}]
Let $S$ be a non-degenerate even lattice, and fix a subgroup $A\subset
O(S)$. The map $L\mapsto \mathcal{K}=L/S \subset \mathcal{S}$
establishes a one-to-one correspondence between the set of
$A$-isomorphism classes of finite index extensions $L\supset S$
and the set of $A$-orbits of isotropic subgroups
$\mathcal{K}\subset \mathcal{S}$. Under this correspondence one
has $L=\{x\in S^*\,|\, (x \bmod S)\in\mathcal{K}\}$ and $\mathcal{L}=\mathcal{K}^{\bot}/\mathcal{K}$.
\end{proposition}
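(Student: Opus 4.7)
The plan is to first establish the bijection between strict isomorphism classes of finite index even extensions and isotropic subgroups of $\mathcal{S}$, and then upgrade the statement to $A$-orbits by naturality.

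For the direct construction, given a finite index extension $L\supset S$ I form the chain $S\subset L\subset L^*\subset S^*$ (which uses that $S$ is non-degenerate, so $L\otimes\mathbb Q=S\otimes\mathbb Q$ and the bilinear form on $L$ coincides with the rational extension of the one on $S$). The quotient $\mathcal K:=L/S$ sits inside $\mathcal S=S^*/S$, and since $L$ is even integral, for any $x\in L$ one has $x^{2}\in 2\mathbb Z$, so $q_{\mathcal S}(x\bmod S)=0\in\mathbb Q/2\mathbb Z$; hence $\mathcal K$ is $q$-isotropic. This defines the forward map.

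For the inverse, given an isotropic $\mathcal K\subset\mathcal S$ I set $L_{\mathcal K}:=\{x\in S^{\ast}\mid(x\bmod S)\in\mathcal K\}$ and must check that $L_{\mathcal K}$ is an even integral overlattice of $S$. Integrality is automatic: for $x,y\in L_{\mathcal K}$ write $x=a+k_1$, $y=b+k_2$ with $a,b\in S$ and $k_1,k_2$ lifts of elements $\bar k_1,\bar k_2\in\mathcal K$; the three mixed terms $a\cdot b$, $a\cdot k_2$, $k_1\cdot b$ lie in $\mathbb Z$ by definition of $S^{\ast}$, and $k_1\cdot k_2\in\mathbb Z$ follows from $\mathcal K$ being $q$-isotropic, since $2b_{\mathcal S}(\bar k_1,\bar k_2)=q(\bar k_1+\bar k_2)-q(\bar k_1)-q(\bar k_2)=0$ in $\mathbb Q/2\mathbb Z$, hence $b_{\mathcal S}(\bar k_1,\bar k_2)=0$ in $\mathbb Q/\mathbb Z$. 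Evenness follows analogously: $x^{2}=a^{2}+2\,a\cdot k_1+k_1^{2}$, where $a^{2}\in 2\mathbb Z$ since $S$ is even, $2\,a\cdot k_1\in 2\mathbb Z$, and $k_1^{2}\in 2\mathbb Z$ because $q(\bar k_1)=0\bmod 2\mathbb Z$. It is immediate that the two constructions $L\mapsto\mathcal K$ and $\mathcal K\mapsto L_{\mathcal K}$ are mutually inverse at the level of strict isomorphism classes.

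To identify the new discriminant, I compute $L_{\mathcal K}^{\ast}\subset S\otimes\mathbb Q$ as the set of $x$ pairing integrally with every element of $L_{\mathcal K}$. Pairing integrally with $S\subset L_{\mathcal K}$ forces $x\in S^{\ast}$, and pairing integrally with all lifts of $\mathcal K$ translates, after reduction modulo $S$, into $\bar x\in\mathcal K^{\bot}$ (orthogonal with respect to $b_{\mathcal S}$). Thus $L_{\mathcal K}^{\ast}/S=\mathcal K^{\bot}$ and consequently $\mathcal L=L_{\mathcal K}^{\ast}/L_{\mathcal K}=\mathcal K^{\bot}/\mathcal K$, with the induced quadratic form.

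Finally, to pass from strict isomorphism to $A$-isomorphism, I observe that the correspondence $L\leftrightarrow\mathcal K$ is natural: any isometry $\varphi\colon L_1\xrightarrow{\sim}L_2$ restricting to $\alpha\in O(S)$ on $S$ extends $\mathbb Q$-linearly to $S\otimes\mathbb Q$, preserves $S^{\ast}$, and descends to an automorphism of $\mathcal S$ that coincides with $d(\alpha)$ and sends $L_1/S$ onto $L_2/S$. Conversely, any $\alpha\in A$ carrying $\mathcal K_1$ to $\mathcal K_2$ extends uniquely to an isometry $L_{\mathcal K_1}\to L_{\mathcal K_2}$ by rational linearity. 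Hence the bijection at the strict level descends to a bijection between $A$-isomorphism classes and $A$-orbits of isotropic subgroups, which is the claimed statement. The only real subtlety, and thus the step to treat with care, is the equivalence between $q$-isotropy of $\mathcal K$ and even-integrality of $L_{\mathcal K}$ — all remaining steps are formal manipulations.
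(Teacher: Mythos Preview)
Your proof is correct and is the standard argument for this classical result of Nikulin. The paper does not supply its own proof of this proposition; it merely states the result and cites \cite{Niku2}, so there is nothing to compare against beyond noting that your write-up fills in exactly the details that the paper omits.
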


\begin{proposition}[Nikulin \cite{Niku2}]
Let $L\supset S$ be a finite index extension of a lattice $S$ and let $\mathcal{K}\subset \mathcal{S}$ be its kernel. Then an auto-isometry $S\rightarrow S$ extends to $L$ if and only if the induced automorphism of $\mathcal{S}$ preserves $\mathcal{K}$.
\end{proposition}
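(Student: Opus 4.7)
The plan is to leverage the explicit description of finite-index extensions supplied by the preceding proposition, namely $L=\{x\in S^{*}\mid (x\bmod S)\in\mathcal{K}\}$, and to reduce everything to the standard fact that an auto-isometry of a non-degenerate lattice $S$ extends uniquely by $\mathbb{Q}$-linearity to an auto-isometry of $S\otimes\mathbb{Q}$, hence preserves the dual $S^{*}\subset S\otimes\mathbb{Q}$.

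For the \emph{only if} direction, suppose $\varphi\in O(S)$ extends to an auto-isometry $\tilde\varphi\in O(L)$. Since $S$ is non-degenerate, both $\varphi$ and $\tilde\varphi$ extend uniquely to $S\otimes\mathbb{Q}=L\otimes\mathbb{Q}$, and by uniqueness these two rational extensions coincide. In particular, $\tilde\varphi$ preserves $S^{*}$ and, since $\tilde\varphi(L)=L$, it descends to an automorphism of $L/S=\mathcal{K}$; but this descended map is precisely the restriction of $d(\varphi)\in\Aut(\mathcal{S})$ to $\mathcal{K}$, so $d(\varphi)$ preserves $\mathcal{K}$.

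For the converse, assume $d(\varphi)$ preserves $\mathcal{K}\subset\mathcal{S}$. Extend $\varphi$ to $\varphi_{\mathbb{Q}}\colon S\otimes\mathbb{Q}\to S\otimes\mathbb{Q}$; because $\varphi_{\mathbb{Q}}$ is an isometry and $S^{*}$ is characterized intrinsically as the set of $x\in S\otimes\mathbb{Q}$ with $x\cdot y\in\mathbb{Z}$ for all $y\in S$, the map $\varphi_{\mathbb{Q}}$ restricts to an isometry $\psi\colon S^{*}\to S^{*}$. Using the description $L=\{x\in S^{*}\mid (x\bmod S)\in\mathcal{K}\}$ from the previous proposition and the fact that $\psi$ induces exactly $d(\varphi)$ on $S^{*}/S=\mathcal{S}$, we get
\begin{equation*}
\psi(L)=\bigl\{x\in S^{*}\,\bigm|\,(x\bmod S)\in d(\varphi)(\mathcal{K})\bigr\}=L,
\end{equation*}
so $\tilde\varphi:=\psi|_{L}$ is the desired extension to $L$.

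Honestly, there is no real obstacle here: both implications follow formally from the characterization of $L$ inside $S^{*}$ and from the intrinsic (form-theoretic) description of $S^{*}$. The only point that deserves care is to note that the rational extension of an element of $O(S)$ is unique and automatically preserves $S^{*}$, so that the discussion can be carried out entirely inside $S\otimes\mathbb{Q}$ and the compatibility with the quotient $\mathcal{S}=S^{*}/S$ is tautological.
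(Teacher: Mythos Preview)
Your argument is correct and is the standard proof of this classical fact. Note, however, that the paper does not supply its own proof of this proposition: it is simply quoted from Nikulin \cite{Niku2} as background, so there is nothing to compare against beyond observing that your reasoning is exactly the intended one---extend $\varphi$ uniquely to $S\otimes\mathbb{Q}$, observe that it preserves $S^{*}$, and use the description $L=\{x\in S^{*}\mid (x\bmod S)\in\mathcal{K}\}$ from the preceding proposition.
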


An extension $L\supset S$ is called \textit{primitive} if $L/S$ is torsion free. Following Nikulin \cite{Niku2}, we confine ourselves to the special case where $L$ is unimodular. If $S$ is a primitive non-degenerate sublattice of a unimodular lattice $L$ then $S^{\bot}$ is also primitive in $L$ and $L$ is a finite index extension of $S\oplus S^{\bot}$. Furthermore, since $\operatorname{disc}L=0$, the kernel $\mathcal{K}\subset\mathcal{S}\oplus \mathcal{S}^{\bot}$ is the graph of an anti-isometry $\psi\colon \mathcal{S}\rightarrow \disc S^\bot$. Hence the genus $g(S^{\bot})$ is determined by the genera $g(N)$ and $g(L)$. Conversely, given a lattice $N\in g(S^{\bot})$ and an anti-isometry $\psi\colon\mathcal{S}\rightarrow\mathcal{N}$, the graph of $\psi$ is an isotropic subgroup $\mathcal{K}\subset\mathcal{S}\oplus \mathcal{S}^{\bot}$ and the corresponding finite index extension $S\oplus N\hookrightarrow L$ is a unimodular primitive extension of $S$ with $S^{\bot}\cong N$. Note that an anti-isometry $\psi\colon \mathcal{S}\rightarrow \disc S^\bot$ induces a homomorphism $d^{\psi}\colon O(S)\rightarrow\Aut(\mathcal{N})$. Thus, since also an indefinite unimodular lattice is unique in its genus, we have the following theorem.
\begin{theorem}[Nikulin \cite{Niku2}]\label{Nikulin.doublecoset}
Let $L$ be an indefinite unimodular even lattice and $S\subset L$ a non-degenerate primitive sublattice. Fix a subgroup $A\subset O(S)$. Then the $A$-isomorphism class of a primitive extension $S\subset L$ is determined by
\begin{enumerate}
  \item a choice of a lattice $N\in g(S^{\bot})$ and
  \item a choice of a double coset $c_N \in d^{\psi}(A)\backslash Aut (\mathcal{N})/ \operatorname{Im} d$  (for a given $N$ and some anti-isometry $\psi:\mathcal{S}\rightarrow\mathcal{N}$ inducing $d^{\psi}$).
\end{enumerate}
\end{theorem}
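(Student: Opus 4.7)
The plan is to combine the two preceding Nikulin propositions with the uniqueness of indefinite unimodular even lattices in a fixed genus. Starting from a primitive extension $S\subset L$, I would first pass to the finite-index sublattice $S\oplus S^{\bot}\subset L$ and apply the kernel description recalled in \S\ref{lattice.extensions}: the extension is determined by an isotropic subgroup $\mathcal{K}=L/(S\oplus S^{\bot})\subset\mathcal{S}\oplus\operatorname{disc}(S^{\bot})$.

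Next I would pin down the shape of $\mathcal{K}$. Since $L$ is unimodular, $\operatorname{disc}L=\mathcal{K}^{\bot}/\mathcal{K}=0$, so $|\mathcal{K}|^{2}=|\mathcal{S}|\cdot|\operatorname{disc}(S^{\bot})|$. Primitivity of $S$ (respectively of $S^{\bot}$) in $L$ forces the projection of $\mathcal{K}$ onto $\mathcal{S}$ (respectively onto $\operatorname{disc}(S^{\bot})$) to be injective, hence bijective by comparing orders. Thus $\mathcal{K}$ is the graph of a group isomorphism $\psi\colon\mathcal{S}\to\operatorname{disc}(S^{\bot})$, and the isotropy of $\mathcal{K}$ is equivalent to $\psi$ being an anti-isometry. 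In particular $\operatorname{disc}(S^{\bot})\cong-\mathcal{S}$; combined with the signature of $S^{\bot}$ (which is determined by those of $L$ and $S$), Nikulin's genus theorem fixes the genus of $S^{\bot}$.

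For the converse, given any $N\in g(S^{\bot})$ and any anti-isometry $\psi\colon\mathcal{S}\to\mathcal{N}$, the graph of $\psi$ is a Lagrangian isotropic subgroup of $\mathcal{S}\oplus\mathcal{N}$, producing a primitive even extension $S\oplus N\hookrightarrow L'$ whose discriminant vanishes; thus $L'$ is unimodular of the same signature as $L$. At this point I would invoke the uniqueness of indefinite unimodular even lattices in their genus to identify $L'$ with $L$.

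Finally, to classify $A$-isomorphism classes, I would fix $N$ and one reference anti-isometry $\psi_{0}\colon\mathcal{S}\to\mathcal{N}$, writing every other anti-isometry as $\sigma\psi_{0}$ with $\sigma\in\operatorname{Aut}(\mathcal{N})$. An $A$-isomorphism between the extensions built from $\sigma_{1}\psi_{0}$ and $\sigma_{2}\psi_{0}$ must restrict to some $a\in A$ on $S$ and some $f\in O(N)$ on $N$; tracking the action of $(d(a),d(f))$ on graphs yields $\sigma_{2}=d(f)\,\sigma_{1}\,d^{\psi_{0}}(a)^{-1}$, which identifies the set of classes with the double coset space $\operatorname{Im}(d)\,\backslash\,\operatorname{Aut}(\mathcal{N})\,/\,d^{\psi_{0}}(A)$. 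The main subtlety I expect is the invocation of uniqueness to replace the abstract $L'$ with the given $L$; the rest reduces to bookkeeping of graphs of anti-isometries under pairs of induced automorphisms.
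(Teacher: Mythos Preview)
Your proposal is correct and follows essentially the same route that the paper sketches in the paragraph immediately preceding the theorem statement: reduce to a finite-index extension of $S\oplus S^{\bot}$, identify the kernel as the graph of an anti-isometry, use uniqueness of an indefinite unimodular even lattice in its genus, and then track the action of $A\times O(N)$ on anti-isometries to obtain the double coset description. The only cosmetic difference is that you arrive at $\operatorname{Im}(d)\backslash\operatorname{Aut}(\mathcal{N})/d^{\psi_{0}}(A)$ while the statement writes $d^{\psi}(A)\backslash\operatorname{Aut}(\mathcal{N})/\operatorname{Im}d$; these are canonically identified via $\sigma\mapsto\sigma^{-1}$.
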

\begin{theorem}[Nikulin \cite{Niku2}]\label{Nikulin.homological}
Let $L$ be an indefinite unimodular even lattice, $S\subset L$ a non-degenerate primitive sublattice and $\psi:\mathcal{S}\rightarrow\mathcal{N}$ the anti-isometry where, $N=S^{\bot}$. Then a pair of isometries $a_S\in O(S)$ and $a_N\in O(N)$ extends to $L$ if and only if $d^{\psi}(a_S)=d(a_N)$.
\end{theorem}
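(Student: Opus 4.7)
The plan is to reduce the assertion to the second Nikulin proposition in \S\ref{lattice.extensions}, applied to the finite-index extension $S\oplus N\subset L$.

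First I would observe that, as recalled in the paragraph preceding Theorem \ref{Nikulin.doublecoset}, the primitive embedding $S\hookrightarrow L$ into a unimodular lattice forces $N=S^{\bot}$ to be primitive as well and exhibits $L$ as a finite-index extension of $S\oplus N$ whose kernel $\mathcal{K}\subset\mathcal{S}\oplus\mathcal{N}$ is the graph $\Gamma_\psi:=\{(x,\psi(x))\mid x\in\mathcal{S}\}$ of the anti-isometry $\psi$. An auto-isometry of $L$ preserving $S$ automatically preserves $N=S^{\bot}$; hence extending a pair $(a_S,a_N)\in O(S)\times O(N)$ to $L$ is the same as extending the orthogonal sum $a:=a_S\oplus a_N\in O(S\oplus N)$ to the finite-index over-lattice $L$.

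The preceding proposition then says that $a$ extends to $L$ if and only if the induced map $d(a)=d(a_S)\oplus d(a_N)$ on $\mathcal{S}\oplus\mathcal{N}$ preserves the subgroup $\Gamma_\psi$. Unwinding this preservation condition element-wise gives, for each $x\in\mathcal{S}$,
\[
d(a_N)(\psi(x))=\psi(d(a_S)(x)),
\]
which, after composing with $\psi^{-1}$ on the right and recalling the definition $d^\psi(a_S)=\psi\circ d(a_S)\circ\psi^{-1}$, is exactly the stated identity $d^\psi(a_S)=d(a_N)$.

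In this argument there is essentially no hard step: the work is already done in the two earlier propositions on finite-index extensions and in the identification of $\mathcal{K}$ with the graph of $\psi$. The only point that must be checked carefully is that $\psi$ is and remains an anti-isometry, so that $\Gamma_\psi$ is isotropic in $\mathcal{S}\oplus\mathcal{N}$ and the conjugate $\psi\circ d(a_S)\circ\psi^{-1}$ is a well-defined element of $\Aut(\mathcal{N})$ preserving the discriminant form. Thus the main obstacle is conceptual rather than technical: one must correctly set the stage so that the general extension criterion applies verbatim to $S\oplus N\subset L$, after which the theorem falls out by a one-line diagram chase.
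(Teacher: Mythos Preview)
Your proposal is correct and follows exactly the approach implicit in the paper's exposition: the paper does not write out a proof of this theorem (it is cited from Nikulin), but the two propositions immediately preceding it in \S\ref{lattice.extensions} are stated precisely so that this argument can be run, identifying $L$ as a finite-index extension of $S\oplus N$ with kernel the graph of $\psi$ and then applying the extension criterion. There is nothing to add.
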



\subsection{Miranda--Morrison's theory}\label{sectionMM}
Let $p$ be a prime. Define
 \begin{align*}
 \Gamma_p:&=\{\pm1\}\times\mathbb{Q}^{\times}_p/(\mathbb{Q}^{\times}_p)^2,\\
 \Gamma_0:&=\{\pm1\}\times\{\pm1\}\subset\{\pm1\}\times\mathbb{Q}^{\times}/(\mathbb{Q}^{\times})^2.
 \end{align*}
It is convenient to introduce the following subgroups related to $\Gamma_p$ :
\begin{itemize}
\item $\Gamma_{p,0}:=\{(1,1),(1,u_p),(-1,1),(-1,u_p)\}\subset\Gamma_p$; here, $p$ is odd and $u_p$ is the only nonzero element of $\mathbb{Z}^{\times}_p/(\mathbb{Z}^{\times}_p)^2$,
\item $\Gamma_{2,0}:=\{(1,1),(1,3),(1,5),(1,7),(-1,1),(-1,3),(-1,5),(-1,7)\}\subset\Gamma_2$,
\item $\Gamma_p^{++}:=\{1\}\times\mathbb{Z}_p^{\times}/(\mathbb{Z}_p^{\times})^2\subset\Gamma_{p,0}$,
\item $\Gamma_{2,2}:=\{(1,1),(1,5)\}\subset\Gamma_2^{++}$,
\item $\Gamma'_{2,0}:=\Gamma_{2,0}/\Gamma_{2,2}$ (and $\Gamma'_{p,0}:=\Gamma_{p,0}$ for $p\neq2$),
\item $\Gamma_0^{--}:=\{(1,1),(-1,-1)\}\subset\Gamma_0$.
\end{itemize}
Let, further,
\begin{align*}
    \Gamma_{\mathbb{A},0}:=\prod_p\Gamma_{p,0}\subset\Gamma_{\mathbb{A}}:=\Gamma_{\mathbb{A},0}\cdot\sum_p\Gamma_p
\end{align*}
where $"\cdot"$ denotes the sum of the subgroups. Note that
\begin{align*}
    \Gamma_{\mathbb{A}}=\{(d_p,s_p)\in \prod_p\Gamma_p\ |(d_p,s_p)\in\Gamma_{p,0}\text{ for almost all $p$}\}
\end{align*}
The natural map $\mathbb{Q}^{\times}/(\mathbb{Q}^{\times})^2\rightarrow\mathbb{Q}^{\times}_p/(\mathbb{Q}^{\times}_p)^2$ induces canonical maps
\begin{align}\label{fayp}
     \varphi_p:\Gamma_0\rightarrow\Gamma_{p,0}.
\end{align}

Let $N$ be an indefinite lattice with $\operatorname{rk}(N)\geq 3$. We will use certain subgroups $\Sigma_p^{\sharp}(N)\subset\Gamma_{p,0}$ and $\Sigma_p(N)\subset\Gamma_p$. In the notation of \cite{MM3} (which slightly differs from the notation in \cite{MM1, MM2}), one has $\Sigma_p^{\sharp}(N):= \Sigma^{\sharp}(N\otimes\mathbb{Z}_p)$ and $\Sigma_p(N):=\Sigma(N\otimes\mathbb{Z}_p)$; we refer the reader to \cite{MM3} for the precise definitions. The subgroups $\Sigma^{\sharp}_p(N)$ are computed explicitly in \cite{MM3} (see Theorem 12.1, 12.2, 12.3 and 12.4).

Also defined in \cite{MM3} is the $\mathbb{F}_2$-module
\begin{align}\label{E(N)}
    E(N):=\Gamma_{\mathbb{A},0}/\prod_p\Sigma_p^{\sharp}(N)\cdot\Gamma_0.
\end{align}
This module is finite. Indeed, following \cite{MM3}, we call a prime $p$ \emph{regular} with respect to $N$ if $\Sigma^{\sharp}_p(N)=\Gamma_{p,0}$. Crucial is the fact that  a prime $p$ is regular unless $p\mathrel|\operatorname{det}(N)$; thus, (\ref{E(N)}) reduces to finitely many primes $p$:
\begin{align}\label{E(N)finite}
    E(N)=\Gamma_{\mathbb{A},0}/\prod_{p|\operatorname{det}(N)}\Sigma_p^{\sharp}(N)\cdot\Gamma_0.
\end{align}

\begin{theorem}[Miranda--Morrison \cite{MM3}]\label{MMexact.sequence}
Let $N$ be a non-degenerate indefinite even lattice with $\operatorname{rk}(N)\geq 3$. Then there is an exact sequence
\begin{align}\label{exactsequence}
    O(N)\xrightarrow{d}\operatorname{Aut}(\mathcal{N})\xrightarrow{\mathrm{e}} E(N)\rightarrow g(N)\rightarrow 1,
\end{align}
where $g(N)$ is the genus group of $N$.
\end{theorem}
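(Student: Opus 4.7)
The plan is to construct the middle homomorphism $e$ via local spinor--norm invariants and then to derive exactness by invoking strong approximation for the indefinite spin group $\operatorname{Spin}(N\otimes\mathbb{Q})$. For each prime $p$ let $\sigma_p\colon O(N\otimes\mathbb{Z}_p)\to\Gamma_p$ denote the combined determinant and spinor norm; by the very definition of $\Sigma_p^{\sharp}(N)$, the kernel of the reduction $d_p\colon O(N\otimes\mathbb{Z}_p)\to\operatorname{Aut}(\mathcal{N}_p)$ is mapped onto $\Sigma_p^{\sharp}(N)$. Since $N\otimes\mathbb{Z}_p$ is indefinite of rank at least $3$, $d_p$ is surjective by a standard Witt-type lifting argument, and $\sigma_p$ therefore descends to a map $\operatorname{Aut}(\mathcal{N}_p)\to\Gamma_p/\Sigma_p^{\sharp}(N)$. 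Given $\phi\in\operatorname{Aut}(\mathcal{N})=\prod_p\operatorname{Aut}(\mathcal{N}_p)$, the resulting adelic element lies in $\Gamma_{\mathbb{A},0}$ because at primes $p\nmid\det N$ one has $\Sigma_p^{\sharp}(N)=\Gamma_{p,0}$ and any lift of $\phi_p$ may be chosen inside $\Gamma_{p,0}$; quotienting by the diagonal image of $\Gamma_0$ (which accounts for global rational scalars) produces the homomorphism $e\colon\operatorname{Aut}(\mathcal{N})\to E(N)$.

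For exactness at $\operatorname{Aut}(\mathcal{N})$, the composition $e\circ d$ vanishes trivially: a global isometry furnishes compatible local lifts whose spinor norm is represented by a single element of $\Gamma_0$. Conversely, assume $e(\phi)=0$ and choose local lifts $\phi_p$; after correcting by elements of $\prod_p\Sigma_p^{\sharp}(N)\cdot\Gamma_0$, their combined determinant--spinor norm becomes trivial in the idele class group, and strong approximation for $\operatorname{Spin}(N\otimes\mathbb{Q})$ (Eichler--Kneser) yields a global isometry $\tilde\phi\in O(N)$ with $d(\tilde\phi)=\phi$.

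For the remainder of the sequence, given $\gamma\in E(N)$ represented by an adelic isometry $(\alpha_p)$ with $\alpha_p\in O(N\otimes\mathbb{Z}_p)$ for almost all $p$, the regluing of $N$ along $(\alpha_p)$ produces a lattice $N'\subset N\otimes\mathbb{Q}$ locally isometric to $N$ at every place and hence an element of $g(N)$; this defines the map $E(N)\to g(N)$, and surjectivity follows immediately from the definition of the genus. Its kernel consists of those classes for which the data $(\alpha_p)$ can be adjusted by an element of $O(N)$ together with local factors in $O(N\otimes\mathbb{Z}_p)$ so as to trivialize the gluing; by construction of $e$ and of $\Sigma_p^{\sharp}(N)$, this is exactly the image of $e$, establishing exactness at $E(N)$.

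The principal technical obstacle is the invocation of strong approximation for $\operatorname{Spin}(N\otimes\mathbb{Q})$, for which both hypotheses of the theorem are indispensable: indefiniteness provides an isotropic archimedean place, while $\operatorname{rk}(N)\geq 3$ ensures the simple connectedness needed for the approximation theorem to apply. All the remaining combinatorics---the explicit determination of $\Sigma_p^{\sharp}(N)$ in each Jordan block of $N\otimes\mathbb{Z}_p$ and the finiteness of $E(N)$ observed in \eqref{E(N)finite}---is the case-by-case bookkeeping carried out in \cite{MM3}.
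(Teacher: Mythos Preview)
The paper does not prove this theorem at all: it is quoted verbatim from Miranda--Morrison \cite{MM3} and used as a black box throughout \S\ref{sectionMM} and the proofs in \S\ref{proof.maintheorem}. There is therefore no proof in the paper to compare your proposal against.

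That said, your sketch is essentially the argument one finds in \cite{MM3}: construct $e$ from the local $(\det,\operatorname{spin})$ invariants, and derive exactness from strong approximation for the spin group together with the local surjectivity of $d_p$. A couple of your justifications are slightly off. First, the phrase ``$N\otimes\mathbb{Z}_p$ is indefinite'' is not meaningful over $\mathbb{Z}_p$; the surjectivity of $d_p$ follows instead from the local theory of quadratic $\mathbb{Z}_p$-lattices (lifting automorphisms of the discriminant form), not from any notion of indefiniteness. Second, the role of $\operatorname{rk}(N)\geq 3$ is not ``simple connectedness''---$\operatorname{Spin}$ is simply connected by construction---but rather that the spin group is then absolutely almost simple and isotropic at the real place, which are the actual hypotheses of the Eichler--Kneser strong approximation theorem. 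With these caveats, your outline matches the Miranda--Morrison approach.
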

A simplified version of (\ref{E(N)finite}) computing  the numeric invariants
\begin{align*}
    e_p(N):=[\Gamma_{p,0}:\Sigma^{\sharp}(N)] \text{ and } \tilde{\Sigma}_p(N):=\varphi^{-1}_p(\Sigma^{\sharp}_p(N))\subset\Gamma_0,
\end{align*}
is found in \cite{MM1, MM2}. This gives us the size of the group $E(N)$: one has
\begin{align}\label{orderofEN}
    |E(N)|=\frac{e(N)}{[\Gamma_0:\tilde\Sigma(N)]}
\end{align}
where
\begin{align*}
    e(N):=\prod_p e_p(N), \quad\tilde\Sigma(N):=\bigcap_p \tilde\Sigma_p(N),
\end{align*}
and the product and intersection run over all primes $p$ or, equivalently, over all primes $p\mathrel|\operatorname{det}(N)$.

The following theorem can be deduced from Theorems \ref{Nikulin.doublecoset} and \ref{MMexact.sequence}.
\begin{theorem}[Miranda--Morrison \cite{MM1,MM2}]\label{correspondence.E(N)}
Let $S$ be a primitive sublattice of an even unimodular lattice $L$ such that $N:=S^{\bot}$ is a non-degenerate indefinite even lattice with $\operatorname{rk}(N)\geq3$. Then the strict isomorphism classes of primitive extensions $S\hookrightarrow L$ are in a canonical one-to-one correspondence with the group $E(N)$.
\end{theorem}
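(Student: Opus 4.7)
The plan is to combine Theorem~\ref{Nikulin.doublecoset}, specialized to $A=\{\mathrm{id}\}$, with the Miranda--Morrison exact sequence of Theorem~\ref{MMexact.sequence}. Before starting I would observe that since $N$ is indefinite, so is $L$, and an indefinite unimodular even lattice is determined up to isomorphism by its signature; thus every primitive extension of $S$ by a lattice in $g(S^{\bot})$ lands in a copy of $L$.

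The first step is to invoke Theorem~\ref{Nikulin.doublecoset} with $A=\{\mathrm{id}\}\subset O(S)$. Then $d^{\psi}(A)=\{\mathrm{id}\}\subset \mathrm{Aut}(\mathcal{N})$, so the double cosets collapse to ordinary right cosets, and the strict isomorphism classes of primitive extensions $S\hookrightarrow L$ are in bijection with pairs $(N,c_N)$, where $N\in g(S^{\bot})$ and $c_N\in \mathrm{Aut}(\mathcal{N})/\mathrm{Im}\,d_N$, with $d_N:O(N)\to \mathrm{Aut}(\mathcal{N})$ the standard homomorphism.

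The second step is to glue all these fibers into a single group using the exact sequence
\begin{align*}
    O(N)\xrightarrow{\,d_N\,}\mathrm{Aut}(\mathcal{N})\xrightarrow{\,e_N\,}E(N)\to g(N)\to 1
\end{align*}
provided by Theorem~\ref{MMexact.sequence}, which applies since $N$ is indefinite, even, non-degenerate, with $\mathrm{rk}(N)\ge 3$. By exactness, $e_N$ descends to an injection $\mathrm{Aut}(\mathcal{N})/\mathrm{Im}\,d_N\hookrightarrow E(N)$ whose image is the fiber of $E(N)\twoheadrightarrow g(N)$ over the class $[N]$. Because $\Sigma^{\sharp}_p(N)$ depends only on $N\otimes \mathbb{Z}_p$, the group $E(N)$ is a genus invariant, hence equal to $E(S^{\bot})$ for every $N\in g(S^{\bot})$; as $N$ ranges over $g(S^{\bot})$, the fibers in question partition $E(S^{\bot})$, producing the desired bijection.

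The step I expect to require the most care is the \emph{canonicality} assertion. The correspondence in Theorem~\ref{Nikulin.doublecoset} depends on the auxiliary anti-isometry $\psi:\mathcal{S}\to\mathcal{N}$ used to identify $S^{\bot}$ with $N$; any two admissible choices differ by precomposition with an element of $\mathrm{Aut}(\mathcal{N})$, and I would verify that this ambiguity is precisely killed after passing to the quotient $\mathrm{Aut}(\mathcal{N})/\mathrm{Im}\,d_N$ and mapping into $E(N)$ via $e_N$, so that the resulting element of $E(S^{\bot})$ is intrinsic to the isomorphism class of the extension $S\hookrightarrow L$.
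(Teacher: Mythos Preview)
Your approach is essentially the same as the paper's: the paper does not give a proof but simply states that the theorem ``can be deduced from Theorems~\ref{Nikulin.doublecoset} and~\ref{MMexact.sequence},'' and your proposal spells out precisely that deduction---specialize Nikulin's classification to $A=\{\mathrm{id}\}$ and then use the Miranda--Morrison exact sequence to identify the resulting union of coset spaces $\bigsqcup_{N\in g(S^\perp)}\mathrm{Aut}(\mathcal{N})/\mathrm{Im}\,d_N$ with $E(S^\perp)$.

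One small correction regarding canonicality: the ambiguity in $\psi$ is \emph{not} killed by passing to $E(N)$; replacing $\psi$ by $\tau\circ\psi$ translates the image in $E(N)$ by $-\mathrm{e}(\tau)$. What makes the correspondence canonical is rather that the hypotheses already fix a specific primitive extension $S\subset L$, and hence a specific $\psi$ (with $N=S^\perp$ taken inside the given $L$), which serves as the basepoint mapping to $0\in E(N)$. With that understood, your plan is correct.
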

As explained \S\ref{lattice.extensions}, given a unimodular lattice $L$ and a primitive sublattice $S\subset L$, one has an anti-isometry $\psi :\mathcal{S}\rightarrow \mathcal{N}$ (where $N=S^{\perp}$), which induces a homomorphism $d^{\psi}:O(S)\rightarrow\operatorname{Aut}(\mathcal{N})$. If $N$ is indefinite and $\operatorname{rk}(N)\geq 3$, then $d(O(S))\subset\Aut(\mathcal{N})$ is a normal subgroup with abelian quotient (see (\ref{exactsequence})) and  we have a homomorphism  $d^{\bot}:O(S)\rightarrow \Aut(\mathcal{N})\xrightarrow{\mathrm{e}}E(N)$ independent of the choice of an anti-isometry $\psi$. The next statement follows from Theorems \ref{MMexact.sequence} and \ref{Nikulin.doublecoset}.
\begin{corollary}\label{coker}
Let $S$ be a primitive sublattice of an even unimodular lattice $L$ such that $N:=S^{\bot}$ is non-degenerate indefinite even lattice with $rk(N)\geq3$ and let $A\subset O(S)$ be a subgroup. Then, the $A$-isomorphism classes of primitive extensions $S\hookrightarrow L$ are in a one-to-one correspondence with the $\mathbb{F}_2$-module $\operatorname{coker}d^\bot(A)$.
\end{corollary}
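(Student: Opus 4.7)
The plan is to deduce the corollary by combining Theorem~\ref{correspondence.E(N)} with the observation that the natural action of $O(S)$ on the set of strict isomorphism classes of primitive extensions factors through the homomorphism $d^{\bot}\colon O(S)\to E(N)$ introduced just before the corollary. By Theorem~\ref{correspondence.E(N)}, the set of strict isomorphism classes of primitive extensions $S\hookrightarrow L$ is in canonical bijection with the abelian group $E(N)$. An $A$-isomorphism between two extensions is, by definition, an isomorphism of lattices that restricts to some $a\in A$ on $S$; hence two extensions are $A$-isomorphic if and only if they lie in the same orbit under the restriction to $A$ of the obvious $O(S)$-action on the set of strict isomorphism classes, where $a\in O(S)$ acts by precomposing the primitive embedding $S\hookrightarrow L$ with $a$.

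The main step is therefore to verify that this action of $O(S)$ on $E(N)$ is precisely translation by the subgroup $d^{\bot}(O(S))\subset E(N)$. Fix an anti-isometry $\psi\colon\mathcal{S}\to\mathcal{N}$ associated with a reference extension. For each $N'\in g(S^{\bot})$, the strict isomorphism classes of primitive extensions whose orthogonal complement is isomorphic to $N'$ are parametrized, via Theorem~\ref{Nikulin.doublecoset} with $A=\{\operatorname{id}\}$, by $\Aut(\mathcal{N})/\operatorname{Im} d$, and precomposing an embedding with $a\in O(S)$ multiplies the corresponding coset by $d^{\psi}(a)$. Assembling these identifications over all $N'\in g(S^{\bot})$ via the Miranda--Morrison exact sequence~(\ref{exactsequence}) and composing with the map $\mathrm{e}\colon\Aut(\mathcal{N})\to E(N)$ yields the addition of $d^{\bot}(a)=\mathrm{e}(d^{\psi}(a))\in E(N)$, which is independent of the choice of $\psi$ by the remark immediately preceding the corollary.

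Once this compatibility is established, the corollary follows formally: the set of $A$-orbits on the abelian group $E(N)$ equals the quotient $E(N)/d^{\bot}(A)$, which is precisely $\operatorname{coker}d^{\bot}(A)$ and inherits a natural $\mathbb{F}_{2}$-module structure from that of $E(N)$. The principal obstacle is therefore the bookkeeping in the previous paragraph, namely a careful check that the $O(S)$-action implicit in Nikulin's double-coset parametrization agrees, after passage to $E(N)$, with the abstract homomorphism $d^{\bot}$ extracted from the exact sequence~(\ref{exactsequence}); the rest then reduces to a formal manipulation of that sequence.
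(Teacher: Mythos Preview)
Your argument is correct and follows essentially the same route that the paper indicates: the paper merely states that the corollary follows from Theorems~\ref{MMexact.sequence} and~\ref{Nikulin.doublecoset}, and you have unpacked exactly this implication, additionally invoking Theorem~\ref{correspondence.E(N)} (itself a consequence of the same two theorems) to phrase the passage from the double-coset description to the group $E(N)$ cleanly. The only remark worth making is that your description of the $O(S)$-action as ``precomposition of the embedding with $a$'' is equivalent, after identifying $\mathcal{S}$ with $\mathcal{N}$ via $\psi$, to the left action by $d^{\psi}(a)$ appearing in Theorem~\ref{Nikulin.doublecoset}; since $\operatorname{Im} d\subset\Aut(\mathcal{N})$ is normal with abelian quotient, this indeed descends to translation by $d^{\bot}(a)$ in $E(N)$, as you claim.
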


\subsection{Reflections}\label{reflections}
Recall that $\Aut(\mathcal{N})=\prod_p\Aut (\mathcal{N}_p)$ where $p$ runs over all primes. Let $s$ be a prime and $\alpha\in\mathcal{N}_s$ such that
\begin{equation}\label{star}
  \text{$s^k\alpha=0$ and $\alpha^2=\frac{2u}{s^k} \bmod2\mathbb{Z}$, g.c.d$(u,s)=1$, $k\in\mathbb{N}$.}
\end{equation}
We denote by $\mathcal{N}^{\dag}_s$ the set of all elements $\alpha\in\mathcal{N}_s$ satisfying (\ref{star}) and let $\mathcal{N}^{\dag}=\bigcup_s\mathcal{N}^{\dag}_s$.
Then one can define a map,
\begin{align*}
  \mathcal{N}_s\rightarrow\mathbb{Z}/s^k,\quad   x\mapsto\frac{2(x\cdot \alpha)}{\alpha^2}\bmod s^k.
\end{align*}
where $\alpha\in\mathcal{N}^{\dag}_s$. Thus, there is a reflection $r_{\alpha}\in\Aut \mathcal{N}_s$ given by
\begin{align*}
    r_{\alpha}: x\mapsto x-\frac{2(x\cdot \alpha)}{\alpha^2}\alpha.
\end{align*}
If $\alpha^2=\frac{1}{2}\mod\mathbb{Z}$ and $2\alpha=0$ then $r_{\alpha}=\operatorname{id}$.

Let $p$ be a prime and consider the homomorphism
\begin{align*}
    \Aut(\mathcal{N})=\prod_p\Aut(\mathcal{N}_p)\xrightarrow{\phi}\prod_p\Sigma_p(N)/\Sigma^{\sharp}_p(N)
\end{align*}
which is the product of the epimorphisms
\begin{align*}
    \phi_p:\Aut(\cN_p)\twoheadrightarrow\Sigma_p(N)/\Sigma^{\sharp}_p(N)
\end{align*}
introduced in Miranda--Morrison \cite{MM3}. The images of the homomorphism $\phi_p$ can be computed on reflections as follows: For a prime $s$ and an element $\alpha\in\cN^{\dag}_s$,
the image of the reflection  $r_{\alpha}\in\Aut(\cN_s)$ under $\phi_s$ is given by $\phi_s(r_{\alpha})=(-1,us^k)$, see (\ref{star}).
If $s=2$ and $\alpha^2=0\bmod \mathbb{Z}$, then $\phi_s(r_{\alpha})$ is only well-defined $\mod \Gamma_2^{++}$. If $s=2$ and $\alpha^2=\frac{1}{2}\bmod \mathbb{Z}$, then $\phi_s(r_{\alpha})$ is well-defined $\mod {\Gamma_{2,2}}$. In these cases to determine the value of $\phi_s(r_{\alpha})$, we need more information about $\alpha$ and $N$.

 Given another prime $p$, we define the \emph{$p$-norm} $|\alpha|_p\in \{\pm1\}$ of $\alpha\in \cN^{\dag}_s$ by
$$
|\alpha|_p := \left\{
        \begin{array}{ll}
            \chi_p(s^k) & \quad \text{if $s\neq p$}, \\
            \chi_p(u) & \quad \text{if $s=p$},
        \end{array}
    \right.
$$
where the homomorphism $\chi_p:\mathbb{Z}_p^\times/(\mathbb{Z}_p^\times)^2\rightarrow\{\pm1\}$ is defined as
$$
\chi_p(u) := \left\{
        \begin{array}{ll}
           \big(\frac{u}{p}\big) & \quad \text{if $p\neq 2$}, \\
           u \bmod 4 & \quad \text{if $p=2$}.
        \end{array}
    \right.
$$
Note that $|\alpha|_2$ is undefined when $p=2$ and $\alpha^2=0\mod \mathbb{Z}$. Following \cite{Alex2}, given  primes $p$ and $s$ and a vector $\alpha\in \cN^{\dag}_s$, we introduce the group

$$
E_p(N) := \left\{
       \begin{array}{ll}
      \{ \pm1\} & \quad \text{if $p=1\bmod 4$ and $e_p(N)\cdot|\tilde{\Sigma}_p(N)|=8$}, \\
        \quad1 & \quad \text{otherwise},
     \end{array}
   \right.
$$
the map $\bar{\phi}_p:\cN^{\dag}_s\rightarrow E_p(N)$,
$$
\bar{\phi}_p(\alpha):= \left\{
        \begin{array}{ll}
          1 & \quad \text{if $E_p(N)=1$}, \\
          |\alpha|_p & \quad \text{otherwise},
        \end{array}
    \right.
$$
and the map $\bar{\beta}_p:\cN^{\dag}_s\rightarrow \Gamma_0$,
$$
\bar{\beta}_p(\alpha):= \left\{
        \begin{array}{ll}
          (\delta_p(\alpha)\cdot|\alpha|_p,1) & \quad \text{if $p=1\bmod 4$}, \\
          \delta_p(\alpha)\times|\alpha|_p& \quad \text{otherwise},
        \end{array}
    \right.
$$
where the map
$$\delta_p(\alpha):=(-1)^{\delta_{p,s}}$$
(here  $\delta_{p,s}$ is the conventional Kronecker symbol). Note  that we have the assignment
\begin{align*}
    r_{\alpha}\mapsto(\delta_p(\alpha),|\alpha|_p)\in\Gamma'_{p,0}.
\end{align*}
The following lemmas provide an explicit description for the group $E(N)$ and compute the image of the  homomorphism $\mathrm{e}$ on the reflections $r_{\alpha}$ for the special case when $N$ has one or two irregular primes.
\begin{lemma}[Akyol--Degtyarev \cite{Alex2}]\label{1irr}
Let $N$ be a non-degenerate indefinite even lattice with $rk(N)\geq3$, $\Sigma_2^{\sharp}(N)\supset\Gamma_{2,2}$, and assume that $N$ has one irregular prime $p$. Then $E(N)=E_p(N)$ and $\mathrm{e}(r_\alpha)=\bar{\phi}_p(\alpha)$ for any $\alpha\in \cN^{\dag}$.
\end{lemma}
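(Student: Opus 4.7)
The plan is to exploit the one-irregular-prime hypothesis in two stages: first to collapse the adelic definition (\ref{E(N)finite}) of $E(N)$ to a single factor at $p$, and second to evaluate $\mathrm{e}(r_\alpha)$ by tracing the Miranda--Morrison formula $\phi_s(r_\alpha) = (-1, us^k)$ through the resulting quotient.

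\emph{Computing $E(N)$.} Since every prime $q \neq p$ is regular, one has $\Sigma_q^\sharp(N) = \Gamma_{q,0}$ and (\ref{E(N)finite}) reduces to
\[
E(N) = \Gamma_{p,0}/\bigl(\Sigma_p^\sharp(N)\cdot\varphi_p(\Gamma_0)\bigr),
\]
with $|E(N)| = e_p(N)\,|\tilde{\Sigma}_p(N)|/4$ by (\ref{orderofEN}). I would then split into three cases. If $p = 2$, the hypothesis $\Sigma_2^\sharp(N) \supset \Gamma_{2,2}$ combined with the explicit image $\varphi_2(\Gamma_0) = \{(1,1),(1,7),(-1,1),(-1,7)\}$ yields $\Gamma_{2,2}\cdot\varphi_2(\Gamma_0) = \Gamma_{2,0}$, so $E(N) = 1 = E_2(N)$. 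If $p \equiv 3 \bmod 4$, the element $-1$ is a nonsquare unit in $\mathbb{Z}_p$, so $\varphi_p\colon \Gamma_0 \to \Gamma_{p,0}$ is bijective and $E(N) = 1 = E_p(N)$. If $p \equiv 1 \bmod 4$, then $\varphi_p(\Gamma_0) = \{\pm 1\}\times\{1\}$ has order $2$ inside $\Gamma_{p,0}$ of order $4$, and a direct count gives $|E(N)| = 2$ precisely when $e_p(N)\cdot|\tilde{\Sigma}_p(N)| = 8$, matching $E_p(N) = \{\pm 1\}$.

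\emph{Evaluating $\mathrm{e}(r_\alpha)$.} When $E(N) = 1$ the claim is trivial, so assume $E(N) = \{\pm 1\}$, which forces $p \equiv 1 \bmod 4$. For $\alpha \in \cN_s^\dag$ with $\alpha^2 = 2u/s^k$, the Miranda--Morrison formula $\phi_s(r_\alpha) = (-1, us^k)$ represents $\mathrm{e}(r_\alpha)$ as the class of an adelic element supported only at the prime $s$. If $s = p$, this is $(-1, u) \in \Gamma_{p,0}$ modulo $\Sigma_p^\sharp(N)\cdot\varphi_p(\Gamma_0)$; the sign $-1$ lies in $\varphi_p(\Gamma_0)$, and the residue of $u$ in $\mathbb{Z}_p^\times/(\mathbb{Z}_p^\times)^2$ detects $\chi_p(u) = |\alpha|_p$. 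If $s \neq p$, multiplying the adelic element by the global rational $s^k$ makes its $s$-component $(-1, u) \in \Gamma_{s,0}$ (absorbed by $\Sigma_s^\sharp(N) = \Gamma_{s,0}$) and introduces a factor $(1, s^k)$ at every other prime; for $q \neq p$ regular this factor is absorbed by $\Sigma_q^\sharp(N) = \Gamma_{q,0}$, leaving at position $p$ the class of $(1, s^k)$, whose nontriviality is recorded by the Legendre symbol $\chi_p(s^k) = |\alpha|_p$. In both subcases $\mathrm{e}(r_\alpha) = |\alpha|_p = \bar{\phi}_p(\alpha)$.

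\emph{Main obstacle.} The delicate point is the subcase $s \neq p$: the transport by the global rational $s^k$ is an operation in the larger group $\{\pm 1\}\times\mathbb{Q}^\times/(\mathbb{Q}^\times)^2$ rather than in $\Gamma_0$, so one needs the identification $\Gamma_{\mathbb{A},0}/(\Gamma_0\cdot\prod_q\Sigma_q^\sharp) \cong \Gamma_\mathbb{A}/\bigl((\{\pm 1\}\times\mathbb{Q}^\times/(\mathbb{Q}^\times)^2)\cdot\prod_q\Sigma_q^\sharp\bigr)$, which follows from $\mathbb{Z}^\times = \{\pm 1\}$ and the fact that any global square class restricts trivially to $\mathbb{Z}_q$ for almost all $q$. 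With this identification in hand, the computation is essentially the product formula for Hilbert symbols. The hypothesis $\Sigma_2^\sharp(N) \supset \Gamma_{2,2}$ is also needed to guarantee that $\phi_2(r_\alpha)$ has a well-defined image in $E(N)$ when $\alpha^2 \equiv \tfrac12 \bmod \mathbb{Z}$ or $\alpha^2 \equiv 0 \bmod \mathbb{Z}$ (cases in which $\phi_2(r_\alpha)$ is only determined modulo $\Gamma_{2,2}$ or $\Gamma_2^{++}$), as the ambiguity is then absorbed by $\Sigma_2^\sharp(N)$.
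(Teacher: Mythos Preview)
The paper does not prove this lemma; it is quoted from Akyol--Degtyarev \cite{Alex2} without proof. Your argument is correct and essentially complete: the collapse of $E(N)$ to the single factor $\Gamma_{p,0}/(\Sigma_p^\sharp(N)\cdot\varphi_p(\Gamma_0))$, the case split on $p\bmod 4$ (with $p=2$ handled separately via $\Gamma_{2,2}\cdot\varphi_2(\Gamma_0)=\Gamma_{2,0}$), and the transport by the global square-class $(1,s^k)$ to move the contribution from an $s\ne p$ slot to the $p$ slot are exactly the right moves. In particular, your verification that when $p\equiv 1\bmod 4$ and $|E(N)|=2$ the subgroup $\Sigma_p^\sharp(N)\cdot\varphi_p(\Gamma_0)$ equals $\{\pm1\}\times\{1\}$, so that the quotient is detected by $\chi_p$ on the second coordinate, is the crux and is handled correctly.

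One small clarification to your closing paragraph: when the single irregular prime $p$ is odd, the prime $2$ is regular and hence $\Sigma_2^\sharp(N)=\Gamma_{2,0}$ automatically; the hypothesis $\Sigma_2^\sharp(N)\supset\Gamma_{2,2}$ is therefore only genuinely invoked in the case $p=2$, where it forces $E(N)=1$. The well-definedness of $\phi_2(r_\alpha)$ modulo $\Gamma_{2,2}$ or $\Gamma_2^{++}$ that you mention is then subsumed by this triviality rather than being a separate obstruction.
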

\begin{lemma}[Akyol--Degtyarev \cite{Alex2}]\label{2irr}
Let $N$ be a non-degenerate indefinite even lattice with $rk(N)\geq3$, $\Sigma_2^{\sharp}(N)\supset\Gamma_{2,2}$, and assume that $N$ has two irregular primes $p$, $q$. Then
\begin{align*}
E(N)&=E_p(N)\times E_q(N)\times(\Gamma_0/\tilde{\Sigma}_p(N)\cdot\tilde{\Sigma}_q(N)),\\
\mathrm{e}(r_{\alpha})&=\bar{\phi}_p(\alpha)\times\bar{\phi}_q(\alpha)\times(\bar{\beta}_p(\alpha)\cdot\bar{\beta}_q(\alpha)),
\end{align*}
for any $\alpha\in \cN^{\dag}$ such that $\alpha^2\neq0\bmod\mathbb{Z}$ if $p=2$ or $q=2$.
\end{lemma}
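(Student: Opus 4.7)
The plan is to unpack the definition $E(N)=\Gamma_{\mathbb{A},0}/(\prod_r\Sigma_r^{\sharp}(N)\cdot\Gamma_0)$ from (\ref{E(N)}) under the assumption that exactly two primes $p,q$ are irregular. Since $\Sigma_r^{\sharp}(N)=\Gamma_{r,0}$ for every regular prime $r$, formula (\ref{E(N)finite}) reduces the problem to a finite computation in a group of order at most $64$:
$$E(N)=\frac{\Gamma_{p,0}\times\Gamma_{q,0}}{\bigl(\Sigma_p^{\sharp}(N)\times\Sigma_q^{\sharp}(N)\bigr)\cdot(\varphi_p,\varphi_q)(\Gamma_0)},$$
where $(\varphi_p,\varphi_q)\colon\Gamma_0\to\Gamma_{p,0}\times\Gamma_{q,0}$ is the diagonal map from (\ref{fayp}).

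Next I would introduce the natural surjection $E(N)\twoheadrightarrow B_p\times B_q$, where $B_r:=\Gamma_{r,0}/(\Sigma_r^{\sharp}(N)\cdot\varphi_r(\Gamma_0))$, obtained by projecting each factor separately. A direct diagram chase identifies its kernel with $\Gamma_0/(\tilde{\Sigma}_p(N)\cdot\tilde{\Sigma}_q(N))$: an element of the kernel is represented by a pair $(\varphi_p(\gamma_1),\varphi_q(\gamma_2))$ with $\gamma_i\in\Gamma_0$, and two such pairs are equivalent in $E(N)$ precisely when $\gamma_1\gamma_2^{-1}\in\tilde{\Sigma}_p(N)\cdot\tilde{\Sigma}_q(N)$. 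To identify $B_r$ with $E_r(N)$, I would observe that $\varphi_r(\Gamma_0)=\Gamma_{r,0}$ when $r\equiv 3\bmod 4$ (giving $B_r=1=E_r(N)$), while for $r\equiv 1\bmod 4$ (and, via the hypothesis $\Sigma_2^{\sharp}(N)\supset\Gamma_{2,2}$, also for $r=2$) the image $\varphi_r(\Gamma_0)$ has index two in $\Gamma_{r,0}$; an order count then yields $|B_r|=e_r(N)|\tilde{\Sigma}_r(N)|/4$, which is the size of $E_r(N)$. The resulting short exact sequence splits because every term is an $\mathbb{F}_2$-module of order at most four, and one exhibits a section by lifting each generator of $E_r(N)$ to an explicit element of $\Gamma_{r,0}$.

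For the reflection formula, I would use that the component $\phi_r(r_\alpha)$ is trivial at every prime $r$ distinct from the ambient prime $s$ of $\alpha$, and equals $(-1,us^k)$ at $r=s$, by the explicit computation recorded in \S\ref{reflections}. Projecting this single-prime datum into $B_p\times B_q$ recovers the $|\alpha|_r$ coordinates via the identification of the $p$-norm with the class of $u$ or $s^k$ in $\mathbb{Z}_p^\times/(\mathbb{Z}_p^\times)^2$, and this is precisely $\bar{\phi}_r(\alpha)$. Projecting into the kernel $\Gamma_0/(\tilde{\Sigma}_p(N)\cdot\tilde{\Sigma}_q(N))$, the Kronecker-symbol structure on $\Gamma_0=\{\pm 1\}\times\{\pm 1\}$ converts the coordinate $(-1,us^k)$ into $\bar{\beta}_p(\alpha)\cdot\bar{\beta}_q(\alpha)$, with the prefactor $\delta_p(\alpha)=(-1)^{\delta_{p,s}}$ accounting for the coincidence $p=s$.

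The hard part is the last step. The reduction to a finite computation is immediate, but the definitions of $|\alpha|_p$, $\delta_p(\alpha)$, and $\bar{\beta}_p(\alpha)$ bifurcate depending on whether $p=s$ and on the residue class of $p$ modulo $4$, so one must verify the formula in each of the finitely many parity cases. The hypothesis $\alpha^2\neq 0\bmod\mathbb{Z}$ when $p=2$ or $q=2$ enters at precisely this point: it is the condition that promotes $\phi_2(r_\alpha)$ to a well-defined element of $\Gamma_{2,0}$ (rather than only modulo $\Gamma_2^{++}$ or $\Gamma_{2,2}$), so that the single-prime formula $(-1,us^k)$ may be used without ambiguity in the bookkeeping.
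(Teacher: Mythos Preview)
The paper does not prove this lemma; it is quoted verbatim from Akyol--Degtyarev \cite{Alex2} and used as a black box, so there is no in-paper argument to compare your proposal against.

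That said, your outline is the natural one and is essentially how the result is obtained in the cited source: reduce via (\ref{E(N)finite}) to the two irregular primes, build the exact sequence with kernel $\Gamma_0/(\tilde\Sigma_p(N)\cdot\tilde\Sigma_q(N))$ and quotient $B_p\times B_q$, identify $B_r$ with $E_r(N)$, split over $\mathbb{F}_2$, and then track $\phi_s(r_\alpha)=(-1,us^k)$ through the splitting case by case. One small imprecision worth fixing: for $r=2$ your order-count formula $|B_r|=e_r(N)|\tilde\Sigma_r(N)|/4$ is not the right mechanism. What actually forces $B_2=1$ is that $\varphi_2(\Gamma_0)\cap\Gamma_{2,2}=\{1\}$, so once $\Gamma_{2,2}\subset\Sigma_2^\sharp(N)$ one has $\Sigma_2^\sharp(N)\cdot\varphi_2(\Gamma_0)=\Gamma_{2,0}$ outright, hence $B_2=1=E_2(N)$ irrespective of $e_2(N)$. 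You are also right that the reflection formula is the part requiring care: the $\mathbb{F}_2$-splitting is not canonical, so it must be pinned down by explicit lifts before the $\bar\phi_r$, $\bar\beta_r$ identities can be checked across the various residue classes of $p,q,s$ modulo $4$.
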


\begin{corollary}[Akyol--Degtyarev \cite{Alex2}]\label{c2irr}
Under the hypothesis of Lemma \ref{2irr}, assume, in addition, that $|E(N)|=|E_p(N)|=2$. Then $E(N)=E_p(N)$ and $\mathrm{e}(r_{\alpha})=|\alpha|_p$ for any $\alpha\in \cN^{\dag}$.
\end{corollary}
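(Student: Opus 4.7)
The plan is to derive the corollary as a direct bookkeeping consequence of Lemma \ref{2irr}, using the hypothesis $|E(N)|=|E_p(N)|=2$ to collapse the triple product appearing there to a single factor.

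First, I would apply Lemma \ref{2irr} to decompose
\begin{align*}
E(N)=E_p(N)\times E_q(N)\times\bigl(\Gamma_0/\tilde{\Sigma}_p(N)\cdot\tilde{\Sigma}_q(N)\bigr).
\end{align*}
Since the hypothesis $|E(N)|=|E_p(N)|=2$ forces the other two factors to be trivial, I immediately obtain $E_q(N)=1$ and $\Gamma_0=\tilde{\Sigma}_p(N)\cdot\tilde{\Sigma}_q(N)$, so that $E(N)=E_p(N)$, proving the first assertion. The definition of $\bar{\phi}_q$ then yields $\bar{\phi}_q(\alpha)=1$ for every $\alpha\in\cN^{\dag}$, and the third factor of $\mathrm{e}(r_\alpha)$ in Lemma \ref{2irr} automatically vanishes since it lives in the trivial quotient $\Gamma_0/\tilde{\Sigma}_p(N)\cdot\tilde{\Sigma}_q(N)$. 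Substituting into the formula of Lemma \ref{2irr} leaves only $\mathrm{e}(r_\alpha)=\bar{\phi}_p(\alpha)$, and since $E_p(N)=\{\pm 1\}$ is non-trivial the defining case-split gives $\bar{\phi}_p(\alpha)=|\alpha|_p$, which is the claimed formula.

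The only genuinely subtle point — and where I expect the main obstacle to lie — is in explaining why the extra assumption from Lemma \ref{2irr}, that $\alpha^2\neq 0\bmod\mathbb{Z}$ whenever $p=2$ or $q=2$, can now be dropped so the corollary holds for \emph{every} $\alpha\in\cN^\dag$. The argument is two-fold. On one hand, the condition $E_p(N)=\{\pm 1\}$ requires $p\equiv 1\bmod 4$ by the definition of $E_p(N)$, so in particular $p\neq 2$ and $|\alpha|_p=\chi_p(s^k)$ or $\chi_p(u)$ is always unambiguously defined. On the other hand, the only place where the case $s=2$, $\alpha^2=0\bmod\mathbb{Z}$ could cause trouble is the $\bar{\beta}$ contribution (where $|\alpha|_2$ need not exist); but, as noted above, this contribution lies in the trivial group $\Gamma_0/\tilde{\Sigma}_p(N)\cdot\tilde{\Sigma}_q(N)=1$, so any ambiguity is absorbed and the resulting formula $\mathrm{e}(r_\alpha)=|\alpha|_p$ remains well-defined and correct. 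Together these two observations complete the corollary.
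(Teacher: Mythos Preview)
Your proof is correct and follows the natural deduction from Lemma \ref{2irr}: the size hypothesis collapses the product to the single factor $E_p(N)$, and the observation that $E_p(N)=\{\pm1\}$ forces $p\equiv1\bmod4$ (hence $p\neq2$) is exactly what lets you drop the extra restriction on $\alpha$, since the only undefined quantity $|\alpha|_2$ would appear in factors already killed by the quotient. The paper itself gives no proof of this corollary---it is quoted directly from Akyol--Degtyarev \cite{Alex2}---so there is nothing to compare against beyond noting that your argument is the intended straightforward derivation.
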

\subsection{Positive sign structure}\label{positive.sign.structure}
Let $N$ be a non-degenerate lattice. The orthogonal projection of any positive definite $2$-subspace in $N\otimes\mathbb{R}$ to any other such subspace is an isomorphism of vector spaces. Thus a choice of an orientation of one maximal positive definite subspace in $N\otimes\mathbb{R}$ defines a coherent orientation of any other. A choice of an orientation of a maximal positive definite subspace of $N\otimes\mathbb{R}$ is called a \textit{positive sign structure}. We denote by $O^+(N)$ the subgroup of $O(N)$ consisting of the isometries preserving a positive sign structure. Obviously either $O^+(N)=O(N)$ or $O^+(N) \subset O(N)$ is a subgroup of index $2$. In the latter case, each element of $O(N)\smallsetminus O^+(N)$ is called a $+$-\textit{disorienting} isometry of $N$. Following \cite{MM3}, we define the map $\operatorname{det}_+:O(N)\rightarrow \{\pm1\}$ as
$$
\operatorname{det}_+(a):= \left\{
        \begin{array}{ll}
          -1 & \quad \text{if $a$ preserves the positive sign structure}, \\
          +1& \quad \text{if $a$ reserves the positive sign structure}.
        \end{array}
    \right.
$$
Note that $\operatorname{Ker}(\operatorname{det}_+)=O^+(N)$.
\begin{proposition}[Miranda--Morrison \cite{MM3}]\label{propdet+}
Let $N$ be a non-degenerate indefinite even lattice with $\operatorname{rk}(N)\geq3$. Then one has $\tilde{\Sigma}(N)\subset\Gamma_0^{--}$ if and only if $\operatorname{det}_+(a)=1$ for all $a\in\operatorname{Ker}[d\colon O(N)\rightarrow \Aut(\mathcal{N})]$.
\end{proposition}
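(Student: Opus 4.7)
The plan is to interpret both sides of the equivalence through the adelic formalism underlying Miranda--Morrison's Theorem \ref{MMexact.sequence}. Concretely, one has a ``sign/spinor'' homomorphism $\sigma\colon O(N)\to\Gamma_{\mathbb{A}}$, combining the determinant at the real place with the $p$-adic spinor norms of an isometry. Since a globally defined isometry has a single rational spinor norm in $\mathbb{Q}^{\times}/(\mathbb{Q}^{\times})^{2}$, the map $\sigma$ factors through the diagonal $\Gamma_{0}\hookrightarrow\Gamma_{\mathbb{A}}$, yielding $\sigma\colon O(N)\to\Gamma_{0}$. The proof will then consist of matching $\sigma(\ker d)$ with $\tilde\Sigma(N)$ and identifying $\Gamma_{0}^{--}$ with the subgroup of $\Gamma_{0}$ on which $\det_{+}=1$.

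For the first identification, by the very construction of the exact sequence (\ref{exactsequence}) one has $a\in\ker d$ if and only if the $p$-adic component $\varphi_{p}(\sigma(a))$ lies in $\Sigma^{\sharp}_{p}(N)$ for every prime $p$; this immediately gives
\[
\sigma(\ker d)\subset\bigcap_{p}\varphi^{-1}_{p}(\Sigma^{\sharp}_{p}(N))=\tilde\Sigma(N).
\]
The reverse inclusion $\tilde\Sigma(N)\subset\sigma(\ker d)$ is precisely what strong approximation for the spin group provides in the indefinite, rank $\geq 3$ setting: it is one of the ingredients entering the proof of Theorem \ref{MMexact.sequence} in \cite{MM3}. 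Together these give $\sigma(\ker d)=\tilde\Sigma(N)$.

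For the second identification, the two $\{\pm 1\}$-factors of $\Gamma_{0}$ encode, respectively, the determinant $\det(a)$ and the sign $\theta_{\infty}(a)$ of the rational spinor norm of $a$. Decomposing any isometry of an indefinite real quadratic space as a product of reflections yields the standard identity $\det_{+}(a)=\det(a)\cdot\theta_{\infty}(a)$. Therefore $\sigma(a)\in\Gamma_{0}^{--}$ --- the diagonal subgroup of $\Gamma_{0}$ --- if and only if the two coordinates of $\sigma(a)$ coincide, if and only if $\det_{+}(a)=1$. Combining the two identifications one obtains the chain of equivalences
\[
\tilde\Sigma(N)\subset\Gamma_{0}^{--}\iff\sigma(\ker d)\subset\Gamma_{0}^{--}\iff\det_{+}(a)=1\ \text{for all}\ a\in\ker d,
\]
which is the claimed statement.

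The main obstacle is the surjectivity $\tilde\Sigma(N)\subset\sigma(\ker d)$ in the first step: this is where the indefiniteness and rank $\geq 3$ hypotheses really enter, via strong approximation for the spin group. Everything else reduces to unwinding the Miranda--Morrison conventions of \cite{MM3} and a short coordinate chase in the four-element group $\Gamma_{0}$.
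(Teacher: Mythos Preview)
The paper does not prove this statement; it is quoted from Miranda--Morrison \cite{MM3} and invoked later, in the proof of Theorem~\ref{maintheorem}, as a black box. Your sketch is essentially the argument underlying the result in \cite{MM3}, and the two key identifications --- $\sigma(\ker d)=\tilde\Sigma(N)$ via strong approximation for the spin group, and $\Gamma_{0}^{--}$ as the locus in $\Gamma_{0}$ where $\det_{+}=1$ via the identity $\det_{+}(a)=\det(a)\cdot\theta_{\infty}(a)$ --- are both correct.

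One imprecision is worth fixing: you assert that $\sigma$ factors through $\Gamma_{0}$ on all of $O(N)$, but the rational spinor norm of an arbitrary isometry lies in $\mathbb{Q}^{\times}/(\mathbb{Q}^{\times})^{2}$, not just in $\{\pm 1\}$ (take a reflection in a vector of square $\pm 6$). What is true, and what you actually need, is that $\sigma(\ker d)\subset\Gamma_{0}$: for $a\in\ker d$ each local component lies in $\Sigma^{\sharp}_{p}(N)\subset\Gamma_{p,0}$, so the spinor norm is a $p$-adic unit at every prime and hence equals $\pm 1$ modulo rational squares. With this adjustment the chain of equivalences goes through. A smaller point: the biconditional ``$a\in\ker d$ iff $\varphi_{p}(\sigma(a))\in\Sigma^{\sharp}_{p}(N)$ for all $p$'' overstates what the construction gives directly, but you only use the forward implication, which is immediate.
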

The following lemma computes the images of the function $\operatorname{det}_+$ on reflections.
\begin{lemma}[Akyol--Degtyarev \cite{Alex2}]\label{lemmadet+}
Let $N$ be a non-degenerate indefinite even lattice with $\operatorname{rk}(N)\geq3$, $\Sigma_2^{\sharp}(N)\supset\Gamma_{2,2}$, and assume that there is a prime $p$ such that $\tilde{\Sigma}_p(N)\subset\Gamma_{0}^{--}$. Then, for an element $\alpha\in \cN^{\dag}$ such that $r_{\alpha}\in\operatorname{Im}d$ and $\alpha^2\neq0\bmod \mathbb{Z}$ if $p=2$, one has $\operatorname{det_+}(r_{\alpha})=\delta_p(\alpha)\cdot|\alpha|_p$.
\end{lemma}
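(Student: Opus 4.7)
The plan is to reduce the computation of $\operatorname{det}_+(r_\alpha)$ to the local Miranda--Morrison data $\phi_p(r_\alpha)$ described in \S\ref{reflections}, using the single-prime hypothesis $\tilde{\Sigma}_p(N)\subset\Gamma_0^{--}$ to pin down the sign. First I would check well-definedness: since $\tilde{\Sigma}(N)=\bigcap_q\tilde{\Sigma}_q(N)\subset\tilde{\Sigma}_p(N)\subset\Gamma_0^{--}$, Proposition~\ref{propdet+} gives $\operatorname{det}_+=1$ on $\operatorname{Ker}(d)$, so $\operatorname{det}_+$ descends to $\operatorname{Im}(d)$ and takes a definite value on $r_\alpha$.

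Next I would invoke the Miranda--Morrison packaging: for any $a\in O(N)$ the pair $\bigl(\operatorname{det}_+ a,\,\text{spinor norm of }a\bigr)\in\Gamma_0$, embedded diagonally in $\Gamma_{\mathbb{A}}$, has local components agreeing with $\phi_q(d(a))\in\Sigma_q(N)/\Sigma_q^{\sharp}(N)$ for each prime $q$. Specializing to a lift of $r_\alpha$, the computation recalled just before the lemma gives $\phi_s(r_\alpha)=(-1,us^k)\in\Gamma_{s,0}$ (with $\alpha\in\cN_s^{\dag}$, $s^k\alpha=0$, $\alpha^2=2u/s^k\bmod2\mathbb{Z}$), while $\phi_q(r_\alpha)$ is trivial for every $q\neq s$. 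The assumption $\Sigma_2^{\sharp}(N)\supset\Gamma_{2,2}$, together with the exclusion $\alpha^2\not\equiv0\bmod\mathbb{Z}$ when $p=2$, removes the only residual ambiguities in $\phi_2$ noted in \S\ref{reflections}.

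Under $\tilde{\Sigma}_p(N)\subset\Gamma_0^{--}$, the composition
\begin{align*}
\Gamma_0\xrightarrow{\varphi_p}\Gamma_{p,0}\twoheadrightarrow\Gamma_{p,0}/\Sigma_p^{\sharp}(N)
\end{align*}
has kernel contained in $\Gamma_0^{--}=\{(1,1),(-1,-1)\}$, so the first coordinate (which is $\operatorname{det}_+(a)$) of any lift of $\phi_p(r_\alpha)$ to $\Gamma_0$ is determined by the second coordinate (the spinor norm) through the local image at $p$. I would then split into the cases $s=p$ and $s\neq p$ and trace $(-1,us^k)$ through $\varphi_p$ and the character $\chi_p$, producing $|\alpha|_p=\chi_p(s^k)$ with $\delta_p(\alpha)=1$ in the first case and $|\alpha|_p=\chi_p(u)$ with $\delta_p(\alpha)=-1$ in the second; both cases collapse to the uniform formula $\operatorname{det}_+(r_\alpha)=\delta_p(\alpha)\cdot|\alpha|_p$.

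The main obstacle is the bookkeeping in this last step: one has to verify that, after imposing $\tilde{\Sigma}_p(N)\subset\Gamma_0^{--}$, the local datum $\phi_p(r_\alpha)$ (coming from the $s$-primary component of $\alpha$) unambiguously recovers the global sign $\operatorname{det}_+$ via $\delta_p(\alpha)\cdot|\alpha|_p$. This rests on the explicit description of $\varphi_p$ and of the characters $\chi_p$ and on the fact that the stated hypotheses rule out precisely the degenerate configurations in which $\phi_2(r_\alpha)$ is only well-defined modulo $\Gamma_{2,2}$ or $\Gamma_2^{++}$.
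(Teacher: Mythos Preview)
The paper does not prove this lemma: it is quoted verbatim from Akyol--Degtyarev \cite{Alex2} and used as a black box in \S\ref{proof.maintheorem}. So there is no ``paper's own proof'' to compare your proposal against.

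Your strategy is the natural one and matches how such statements are established in the Miranda--Morrison framework: use Proposition~\ref{propdet+} to see that $\operatorname{det}_+$ descends to $\operatorname{Im}d$, then read off the value from the local invariant at the distinguished prime $p$, exploiting that $\tilde{\Sigma}_p(N)\subset\Gamma_0^{--}$ forces the kernel of $\Gamma_0\to\Gamma'_{p,0}/\Sigma_p^{\sharp}(N)$ into $\Gamma_0^{--}$. That is exactly the mechanism behind the assignment $r_\alpha\mapsto(\delta_p(\alpha),|\alpha|_p)\in\Gamma'_{p,0}$ recorded in \S\ref{reflections}.

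Two points to tighten. First, your case split is transposed: by the definitions in \S\ref{reflections}, when $s=p$ one has $\delta_p(\alpha)=-1$ and $|\alpha|_p=\chi_p(u)$, whereas when $s\neq p$ one has $\delta_p(\alpha)=1$ and $|\alpha|_p=\chi_p(s^k)$; you wrote the opposite. Second, the passage ``the first coordinate of any lift of $\phi_p(r_\alpha)$ to $\Gamma_0$ is determined by the second'' is where the actual content lies, and it deserves one more line: since any two lifts differ by an element of $\tilde{\Sigma}_p(N)\subset\Gamma_0^{--}=\{(1,1),(-1,-1)\}$, the \emph{product} of the two coordinates is the well-defined invariant, and that product is precisely $\delta_p(\alpha)\cdot|\alpha|_p$. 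With those two fixes the argument goes through.
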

Defined in \cite{MM2}, we introduce the group
\begin{align}\label{E+(N)}
    E^+(N):=\Gamma_{\mathbb{A},0}/\prod_p\Sigma_p^{\sharp}(N)\cdot\Gamma_0^{--}.
\end{align}
(Similar to (\ref{E(N)}) and (\ref{E(N)finite}) the actual computation reduces to finitely many primes $p\mathrel|\operatorname{det}(N)$.)
As in Theorem \ref{MMexact.sequence} there is an exact sequence
\begin{align*}
    O^+(N)\xrightarrow{d}\operatorname{Aut}(\mathcal{N})\xrightarrow{\mathrm{e}^+}E^+(N)\rightarrow g(N)\rightarrow 1.
\end{align*}
The size of the group $E^+(N)$ is also computed in \cite{MM2}: one replaces $[\Gamma_0:\tilde\Sigma(N)]$ in (\ref{orderofEN}) with $[\Gamma_0^{--}:\tilde{\Sigma}(N)\cap\Gamma_0^{--}]$. For an irregular prime $p$, we denote $\tilde{\Sigma}^+_p(N):=\tilde{\Sigma}_p(N)\cap\Gamma_0^{--}$.

Given a unimodular even lattice $L$ and a primitive sublattice $S\subset L$ such that $N:=S^{\bot}$ is a non-degenerate indefinite lattice with $\operatorname{rk}(N)\geq 3$, we have a well-defined homomorphism $d^{\bot}_+\colon O(S)\rightarrow E^+(N)$, \emph{cf}. the definition of $d^{\bot}$ in \S\ref{sectionMM}.

Let $p$ and $s$ be two irregular primes and choose an element $\alpha\in \cN^{\dag}_s$ as in (\ref{star}), we introduce the group
$$
E^{+}_p(N) := \left\{
       \begin{array}{ll}
      E_p(N) & \quad \text{if $p=1\bmod 4$}, \\
       \Gamma_{0}/\tilde{\Sigma}_p(N)\cdot\Gamma_0^{--} & \quad \text{otherwise},
     \end{array}
   \right.
$$
the map $\bar{\phi}^+_p:\cN^{\dag}_s\rightarrow E^+_p(N)$,
$$
\bar{\phi}_p^+(\alpha):= \left\{
        \begin{array}{ll}
          \bar{\phi}_p(\alpha) & \quad  \text{if $p=1\bmod 4$}, \\
         \delta_p(\alpha)\cdot|\alpha|_p& \quad \text{if $p\neq1\bmod 4$ and $E^+_p(N)\neq1$},\\
           1& \quad \text{if $p\neq1\bmod 4$ and $E^+_p(N)=1$},
        \end{array}
    \right.
$$
and the map $\bar{\beta}_p^+:\cN^{\dag}_s\rightarrow \Gamma_0^{--}$,
$$
\bar{\beta}_p^+(\alpha):= \left\{
        \begin{array}{ll}
          \delta_p(\alpha)\cdot|\alpha|_p & \quad \text{if $p=1\bmod 4$}, \\
          |\alpha|_p& \quad \text{if $p\neq1\bmod 4$ and $E^+_p(N)\neq1$},\\
     \operatorname{proj}(\bar{\beta}_p(\alpha))   & \quad \text{if $p\neq1\bmod 4$ and $E^+_p(N)=1$},
        \end{array}
    \right.
$$
where $\operatorname{proj}:\Gamma_0\rightarrow\Gamma_0/\tilde{\Sigma}_p(N)=\Gamma_0^{--}$ is the projection map.  Next lemma computes the group $E^+(N)$ and the values of the homomorphism $\mathrm{e}^+$ on the reflections $r_{\alpha}$
\begin{lemma}[Akyol--Degtyarev \cite{Alex2}]\label{2irr+}
Let $N$ be a non-degenerate indefinite even lattice with $\operatorname{rk}(N)\geq3$, $\Sigma_2^{\sharp}(N)\supset\Gamma_{2,2}$ and assume that $N$ has two irregular primes $p$, $q$. Then
\begin{align*}
E^+(N)&=E_p^+(N)\times E^+_q(N)\times(\Gamma_0^{--}/\tilde{\Sigma}_p^+(N)\cdot\tilde{\Sigma}^+_q(N))\\
\mathrm{e}^+(r_{\alpha})&=\bar{\phi}^+_p(\alpha)\times\bar{\phi}^+_q(\alpha)\times(\bar{\beta}_p^+(\alpha)\cdot\bar{\beta}_q^+(\alpha))
\end{align*}
for any $\alpha\in\cN^{\dag}$ such that $\alpha^2\neq0\bmod\mathbb{Z}$ if $p=2$ or $q=2$.
\end{lemma}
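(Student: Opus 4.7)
The plan is to mimic the argument establishing Lemma~\ref{2irr}, systematically replacing $\Gamma_0$ by $\Gamma_0^{--}$ and each $\tilde\Sigma_p(N)$ by $\tilde\Sigma_p^+(N):=\tilde\Sigma_p(N)\cap\Gamma_0^{--}$. Starting from definition~(\ref{E+(N)}) and using the fact that $\Sigma_r^{\sharp}(N)=\Gamma_{r,0}$ for every prime $r$ outside $\{p,q\}$, I would first reduce the computation to the finite quotient
\begin{align*}
E^+(N)=(\Gamma_{p,0}\times\Gamma_{q,0})\big/\bigl[(\Sigma_p^{\sharp}(N)\times\Sigma_q^{\sharp}(N))\cdot\Delta(\Gamma_0^{--})\bigr],
\end{align*}
where $\Delta\colon\Gamma_0^{--}\to\Gamma_{p,0}\times\Gamma_{q,0}$, $x\mapsto(\varphi_p(x),\varphi_q(x))$, is the diagonal built from the canonical maps of~(\ref{fayp}).

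Next I would factor this quotient in two steps. Pushing forward along the projections $\pi_p\colon\Gamma_{p,0}\twoheadrightarrow\Gamma_{p,0}/\Sigma_p^{\sharp}(N)$ and $\pi_q$, the "pure local'' contributions appear as $\pi_p(\Gamma_{p,0})/\pi_p(\varphi_p(\Gamma_0^{--}))$ and its $q$-analogue. A short calculation, splitting according to whether $p\equiv1\pmod4$, shows that these local cokernels coincide with $E_p^+(N)$ and $E_q^+(N)$ as defined in the statement; this is where the piecewise definition of $E_p^+(N)$ gets used. The remaining "global glue'' — the failure of $\Delta(\Gamma_0^{--})$ to be captured by the product of its local projections — then contributes the third factor $\Gamma_0^{--}/\tilde\Sigma_p^+(N)\cdot\tilde\Sigma_q^+(N)$. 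Choosing explicit lifts and running a diagram chase parallel to the one in~\cite{Alex2} produces the asserted direct-product splitting.

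For the second assertion, I would combine the local formula $\phi_s(r_\alpha)=(-1,us^k)\in\Gamma_s$ recorded in~\S\ref{reflections} with the decomposition above. Tracing $\phi_s(r_\alpha)$ through the local projection onto $E_p^+(N)$ (respectively $E_q^+(N)$) reproduces exactly $\bar\phi_p^+(\alpha)$ (respectively $\bar\phi_q^+(\alpha)$), and its image in the third factor is $\bar\beta_p^+(\alpha)\cdot\bar\beta_q^+(\alpha)$; again the piecewise definitions of $\bar\phi_p^+$ and $\bar\beta_p^+$ are exactly what is needed to match the computation in the two residue classes of $p\bmod4$.

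The principal obstacle is the case analysis forced by the distinction $p\equiv1\pmod4$ versus $p\not\equiv1\pmod4$, compounded at the prime~$2$: the formula $\phi_2(r_\alpha)$ is only well-defined modulo $\Gamma_{2,2}$ when $\alpha^2=\tfrac12\bmod\mathbb{Z}$, and only modulo $\Gamma_2^{++}$ when $\alpha^2=0\bmod\mathbb{Z}$. The hypothesis $\Sigma_2^{\sharp}(N)\supset\Gamma_{2,2}$ absorbs the first ambiguity in the quotient defining $E^+(N)$, while the assumption $\alpha^2\neq0\bmod\mathbb{Z}$ whenever $p=2$ or $q=2$ rules out the second. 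Verifying that every step of the diagram chase remains valid under these assumptions is the main bookkeeping burden.
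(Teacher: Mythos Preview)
The paper does not supply its own proof of this lemma: it is quoted verbatim from Akyol--Degtyarev~\cite{Alex2} and used as a black box, exactly like Lemmas~\ref{1irr}, \ref{2irr}, and~\ref{lemmadet+}. So there is nothing in the present paper to compare your argument against.

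That said, your outline is the natural one and is essentially what the cited source does: reduce the adelic quotient~(\ref{E+(N)}) to the two irregular primes, then split off the local cokernels $E_p^+(N)$, $E_q^+(N)$ and the residual diagonal piece coming from $\Gamma_0^{--}$. Your identification of where the hypotheses $\Sigma_2^{\sharp}(N)\supset\Gamma_{2,2}$ and $\alpha^2\neq0\bmod\mathbb{Z}$ enter is correct. The only place to be careful is the claimed \emph{direct}-product splitting: the three factors are subquotients of a single $\mathbb{F}_2$-module, and one must check that the chosen lifts really give a splitting rather than just a filtration with those graded pieces---this is precisely the diagram chase you allude to, and the piecewise definitions of $\bar\phi_p^+$, $\bar\beta_p^+$ are tailored so that it works.
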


\subsection{Root Systems}\label{root.systems}

A \textit{root} in a lattice $L$ is an element $v\in L$ of square $-2$. A \emph{root system} is a negative definite lattice generated by its roots.  Each root system splits uniquely into orthogonal sum of its irreducible components. As explained in \cite{Bour}, the irreducible root systems are $\mathbf{A}_n, n\geq 1$, $\mathbf{D}_m, m\geq4$ and $\mathbf{E}_6$, $\mathbf{E}_7$, $\mathbf{E}_8$. The corresponding discriminant forms are as follows:
\begin{gather*}
\disc \mathbf{A}_n=\Big\langle-\frac{n}{n+1}\Big\rangle,\quad \disc \mathbf{D}_{2k+1}=\Big\langle-\frac{2k+1}{4}\Big\rangle,\\
\disc\mathbf{D}_{8k\pm2}=2\Big\langle\mp\frac{1}{2}\Big\rangle,\quad \disc \mathbf{D}_{8k}=\mathcal{U}(2),\quad \disc \mathbf{D}_{8k+4}=\mathcal{V}(2),\\
\disc\mathbf{E}_6=\Big\langle\frac{2}{3}\Big\rangle,\quad \disc \mathbf{E}_7=\Big\langle\frac{1}{2}\Big\rangle,\quad \disc \mathbf{E}_7=0.
\end{gather*}
Given a root system $S$, the group generated by reflections (defined by the roots of $S$) acts simply transitively on the set of Weyl chambers of $S$. The roots constituting a single Weyl chamber form a \emph{standard basis} for $S$; these roots are naturally identified with the vertices of the Dynkin graph $\Gamma:=\Gamma_S$ Thus, ones has an obvious homomorphism
\begin{align*}
    \operatorname{Sym}(\Gamma)\rightarrow O(S)\rightarrow\Aut(\mathcal{S})
\end{align*}
where $\operatorname{Sym}(\Gamma)$ denotes the symmetries of $\Gamma$. Irreducible root systems correspond to connected Dynkin graphs. The following statement follows immediately from the classification of connected Dynkin graphs (see N. Bourbaki~\cite{Bour}).
\begin{lemma}
Let $\Gamma=\Gamma_S$ be the connected Dynkin graph of an irreducible root system $S$. Then,
\begin{enumerate}
     \item if $S$ is $\mathbf{A}_1$, $\mathbf{E}_7$ or $\mathbf{E}_8$, then $\operatorname{Sym}(\Gamma)=1$
     \item if $S$ is $\mathbf{D}_4$, then $\operatorname{Sym}(\Gamma)=\mathbb{S}_3$
     \item for all other types, $\operatorname{Sym}(\Gamma)=\mathbb{Z}_2$
\end{enumerate}
\end{lemma}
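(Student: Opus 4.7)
The proof is a routine case-by-case inspection of the connected Dynkin diagrams listed in Bourbaki. Since the graph automorphism group $\operatorname{Sym}(\Gamma)$ depends only on the combinatorics of $\Gamma$, I plan to fix the usual picture of each diagram and directly determine its automorphism group as an abstract graph. The general principle I will invoke in each case is that a graph automorphism must (i) preserve the set of vertices of each given degree, in particular mapping leaves to leaves and the unique vertex of degree $\ge 3$ (if any) to itself, and (ii) respect distances in $\Gamma$, hence permute only arms of equal length emanating from the branching vertex.

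For type $\mathbf{A}_n$, the diagram is a path on $n$ vertices; if $n=1$ there is nothing to permute and $\operatorname{Sym}(\Gamma)=1$, while for $n\ge 2$ the only nontrivial symmetry is the reflection reversing the path, giving $\mathbb{Z}_2$. For type $\mathbf{D}_n$ with $n\ge 4$, the diagram has a unique branching vertex with three arms of lengths $1,1,n-3$. When $n=4$ all three arms have length $1$, and since any permutation of the three leaves extends to a graph automorphism fixing the central vertex, one obtains $\operatorname{Sym}(\Gamma)=\mathbb{S}_3$. When $n\ge 5$, the long arm is distinguished by its length, so only the two short leaves can be swapped, yielding $\mathbb{Z}_2$.

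For the exceptional types I argue similarly using the branching vertex and the lengths of the three arms. In $\mathbf{E}_6$ the arms have lengths $1,2,2$; the arm of length $1$ is fixed and the two arms of length $2$ can be swapped, giving $\mathbb{Z}_2$. In $\mathbf{E}_7$ the arm lengths are $1,2,3$, all distinct, so every automorphism fixes each arm pointwise and $\operatorname{Sym}(\Gamma)=1$. In $\mathbf{E}_8$ the arm lengths are $1,2,4$, again all distinct, and the same argument gives $\operatorname{Sym}(\Gamma)=1$. This exhausts all cases.

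There is no real obstacle here: the only point that needs a sentence of justification is that any automorphism of $\Gamma$ is forced to fix the branching vertex (because it is the unique vertex of degree $\ge 3$ in the diagrams where one exists) and must send each arm to an arm of the same length, which reduces the determination of $\operatorname{Sym}(\Gamma)$ to permuting the arms of equal length.
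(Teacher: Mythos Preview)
Your proof is correct and is essentially what the paper has in mind: the paper does not give an explicit argument but simply states that the lemma ``follows immediately from the classification of connected Dynkin graphs (see N.~Bourbaki~\cite{Bour}).'' Your case-by-case inspection of the diagrams, using the branching vertex and arm lengths, is exactly the routine verification the paper is alluding to.
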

If S is $\mathbf{A}_p$, $p\geq2$, $\mathbf{D}_{2k+1}$ or $\mathbf{E}_8$, then the only nontrivial symmetry of $\Gamma$ induces $-\operatorname{id}$ on $\mathcal{S}$. If $S$ is $\mathbf{E}_8$ then $\mathcal{S}=0$ and if $S$ is $\mathbf{A}_1$, $\mathbf{A}_7$ of $\mathbf{D}_{2k}$, the groups $\mathcal{S}$ are $\mathbb{F}_2$ modules  and $-\operatorname{id}=\operatorname{id}$ on $\Aut \mathcal{S}$.

Further details on irreducible root systems are found in N. Bourbaki~\cite{Bour}.

\section{Simple quartics}\label{quartics}
\subsection{Quartics and $K3$-surfaces}\label{QuarticsandK3}
A \textit{quartic} is a surface $X\subset\mathbb{P}^3$ of degree four. A quartic is \textit{simple} if all its singular points are simple, i.e., those of type $\mathbf{A},\mathbf{D},\mathbf{E}$. Isomorphism classes of simple singularities are known to be in a one-to-one correspondence with those of irreducible root systems (see Dufree \cite{Dufree} for details). Hence, a set of simple singularities can be identified with a root system, the irreducible summands of the latter (see \S\ref{root.systems}) correspond to the individual singularity points.

Let $X\subset\mathbb{P}^3$ be a simple quartic and consider its minimal resolution of singularities $\tilde{X}$. It is well known that $\tilde{X}$ is a $K3$-surface; hence, $H_2(\tilde{X})\cong2\mathbf{E}_8\oplus3\mathbf{U}$, where $\mathbf{U}$ is the hyperbolic plane defined as $\mathbf{U}:=\mathbb{Z}u_1\oplus\mathbb{Z}u_2$, $u_1^2=u_2^2=0$ and $u_1\cdot u_2=1$. Note that $2\mathbf{E}_8\oplus3\mathbf{U}$ is the only even unimodular lattice of signature $(\sigma_+,\sigma_-)=(3,19)$. We fix the notation $\mathbf{L}_X:=H_2(\tilde{X})$ and  $\mathbf{L}:=2\mathbf{E}_8\oplus3\mathbf{U}$.

For each simple singular point $p$ of $X$ the components of the exceptional divisor are smooth rational $(-2)$-curves spanning a root lattice in $\mathbf{L}_X$. These sublattices are obviously orthogonal and their orthogonal sum, identified with the set of singularities of $X$, is denoted by $\mathbf{S}_X$. The rank $\operatorname{rk}(\mathbf{S}_X)$ equals the total Milnor number $\mu(X)$. Since $\sigma_-(\mathbf{L})=19$ and $\mathbf{S}_X\subset \mathbf{L}$ is negative definite, one has $\mu(X)\leq 19$ (see \cite{Urabe2}, \emph{cf.}, \cite{Persson}). If $\mu(X)=19$, the quartic is called  \textit{maximizing}. We introduce the following objects:
\begin{itemize}
  \item $\s_X\subset \mathbf{L}_X$: the sublattice generated the set of classes of exceptional divisors contracted by the blow-up map $\tilde{X}\rightarrow X$;
  \item $h_X\in \mathbf{L}_X$: the class of the pull-back of a generic plane section of $X$;
  \item $\s_{X,h}=\s_X\oplus \mathbb{Z}h_X\subset \mathbf{L}_X$;
  \item $\tilde{\s}_X \subset \tilde{\s}_{X,h}\subset \mathbf{L}_X$: the primitive hulls of $\s_X$ and $\s_{X,h}$, respectively, \textit{i.e}, $\tilde{\s}_X:=(\s_X\otimes\mathbb{Q})\cap \mathbf{L}_X$ and  $\tilde{\s}_{X,h}:=(\s_{X,h}\otimes\mathbb{Q})\cap \mathbf{L}_X$, .
  \item $\omega_X \subset \mathbf{L}_X\otimes \mathbb{R}$: the oriented $2$-subspace spanned by the real and imaginary parts of the class of a holomorphic $2$-form on $\tilde{X}$ (the \emph{period} of $\tilde{X}$).
\end{itemize}
The triple $(\s_X, h_X, \mathbf{L}_X)$ is called the \textit{homological type of} $X$.
\subsection{Abstract homological types}\label{abstract.homological.types}
As explained above the set of singularities of a quartic $X\in\mathbb{P}^3$ can be viewed as a root lattice $\mathbf{S}\subset \mathbf{L}$.
\begin{definition}\label{configuration}
A \textit{configuration} (extending a given set of singularities $\mathbf{S}$) is a finite index extension $\tilde{\mathbf{S}}_h\supset \mathbf{S}_h:=\s\oplus\mathbb{Z}h$, $h^2=4$, satisfying the following conditions:
\begin{enumerate}
\item each root  $r\in (\mathbf{S}\otimes\mathbb{Q})\cap\tilde{\mathbf{S}}_h$ with $r^2=-2$ is in $\mathbf{S}$,
\item $\tilde{\mathbf{S}}_h$ does not contain an element $v$ with $v^2=0$ and $v\cdot h=2$.
\end{enumerate}
An \emph{automorphism} of a configuration $\tilde{\s}_h$ is an auto-isometry of $\tilde{\s}_h$ preserving $h$. The group of automorphisms of $\tilde{\s}_h$ is denoted by $\Aut_h(\tilde{\mathbf{S}}_h)$. One has the obvious inclusions $\Aut_h(\tilde{\mathbf{S}}_h)\subset O(\tilde{\mathbf{S}})\subset O(\s)$, the latter is due to $(1)$ in  Definition \ref{configuration}, since $\s$ is recovered as the sublattice in $h^{\bot}\subset\tilde{\s}_h$ generated by roots.
\end{definition}
\begin{definition}\label{sinhom}
An \textit{abstract homological type} extending a fixed set of singularities $\mathbf{S}$ is an extension of $\mathbf{S}_h:=\mathbf{S}\oplus\mathbb{Z}h$, $h^2=4$, to a lattice $L$ isomorphic to $2\mathbf{E}_8\oplus3\mathbf{U}$, such that the primitive hull $\tilde{\mathbf{S}}_h$ of $\mathbf{S}_h$ in $L$ is  a configuration.
\end{definition}
An abstract homological type is uniquely determined by the triple $\mathcal{H}=(\mathbf{S},h,\mathbf{L})$.
An \textit{isomorphism} between two abstract homological types $\mathcal{H}_i=(\mathbf{S}_i,h_i,\mathbf{L}_i)$, $i=1,2$, is an isometry $\mathbf{L}_1\rightarrow \mathbf{L}_2$, taking $h_1$ and $\mathbf{S}_1$ to $h_2$ and $\mathbf{S}_2$, respectively (as a set).

Given an abstract homological type $\mathcal{H}=(\mathbf{S},h,\mathbf{L})$, we let $\tilde{\s}:=(\s\otimes\mathbb{Q})\cap\mathbf{L}$ and $\tilde{\s}_h:=(\s_h\otimes\mathbb{Q})\cap\mathbf{L}$ be the primitive hulls of $\s$ and $\s_h$, respectively. Note that $\tilde\s=h^{\perp}_{\tilde{\s}_h}$, \emph{i.e.}, $\tilde\s$ is also the primitive hull of $h^{\perp}$. The orthogonal complement $\mathbf{S}_h^\bot$ is a non-degenerate lattice with $\sigma_+\s^\bot=2$. It follows that all positive definite $2$-subspaces in $\mathbf{S}_h^\bot\otimes\mathbb{R}$ can be oriented in a coherent way (see \S \ref{positive.sign.structure}).
\begin{definition}
An \textit{orientation} of an abstract homological type $\mathcal{H}=(\mathbf{S},h,\mathbf{L})$ is a choice $\theta$ of one of the coherent orientations of positive definite $2$-subspaces of $\mathbf{S}_h^\bot\otimes\mathbb{R}$
\end{definition}
An \textit{isomorphism} between two oriented singular homological type $(\mathcal{H}_i,\theta_i)$, $i=1,2$, is an isomorphism $\mathcal{H}_1\rightarrow\mathcal{H}_2$, taking $\theta_1$ to $\theta_2$. A singular homological type is called \textit{symmetric} if $(\mathcal{H},\theta)$ is isomorphic to $(\mathcal{H},-\theta)$ for some orientation $\theta$ of $\mathcal{H}$, i.e., $\mathcal{H}$ admits an automorphism reversing the orientation.

\subsection{Classification of singular quartics}\label{classquartics}
Due to Saint-Donat~\cite{Donat} and Urabe~\cite{Urabe2}, a triple $\mathcal{H}=(\mathbf{S},h,\mathbf{L})$ is isomorphic to the homological type  $(\mathbf{S}_X,h_X,\mathbf{L}_X)$ of a simple quartic $X\subset \mathbb{P}^3$ if and only if $\mathcal{H}$ is an abstract homological type in the sense of Definition $\ref{sinhom}$. In this case, the oriented $2$-subspace $\omega_X$ introduced in $\S3.1$ defines an orientation of $\mathcal{H}$.
\begin{theorem}[see Theorem 2.3.1 in \cite{AI}]\label{def.class}
The map sending a simple quartic surface $X\subset\mathbb{P}^3$ to its oriented homological type establishes a one to one correspondence between the set of equisingular deformation classes of quartics with a given set of simple singularities $\mathbf{S}$ and the set of isomorphism classes of oriented abstract homological types extending $\mathbf{S}$. Complex conjugate quartics have isomorphic homological types that differ by the orientations.
\end{theorem}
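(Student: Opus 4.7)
The plan is to reduce the statement to the global Torelli theorem and the surjectivity of the period map for $K3$-surfaces, combined with B.~Saint-Donat's criterion that distinguishes degree-$4$ polarized $K3$-surfaces arising from quartics in $\mathbb{P}^3$ from those arising as double covers of a quadric or Veronese cone. The geometric setup is provided by the minimal resolution $\tilde{X}\to X$: the pair $(\tilde{X},h_X)$ is a polarized $K3$-surface of degree $4$ together with the distinguished sublattice $\mathbf{S}_X\subset\mathbf{L}_X\cong\mathbf{L}$ generated by exceptional curves, from which the oriented homological type is read off.

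First I would verify that the map is well-defined on equisingular deformation classes. An equisingular family admits a simultaneous resolution yielding a smooth family $\tilde{X}_t$ of $K3$-surfaces; the classes of the exceptional curves and of the pullback of a hyperplane section deform parallel-transport invariantly inside the constant lattice $H_2(\tilde{X}_t)\cong\mathbf{L}$, so the triple $(\mathbf{S}_{X_t},h_{X_t},\mathbf{L}_{X_t})$ stays constant up to isomorphism, and the period $\omega_{X_t}$ varies continuously, preserving its coherent orientation. Complex conjugation acts as the identity on rational homology but sends holomorphic $2$-forms to anti-holomorphic ones, which reverses the orientation of $\omega_X$; this already gives the last assertion of the theorem.

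For the converse, fix an oriented abstract homological type $(\mathcal{H},\theta)$. The admissible periods form an open subset $\Omega$ inside one of the two connected components of the relevant period domain (a type IV symmetric space attached to $\tilde{\mathbf{S}}_h^{\perp}$), the component being selected by $\theta$. Surjectivity of the period map realizes every $\omega\in\Omega$ as the period of a marked polarized $K3$-surface $(\tilde{X},h,\phi)$, and global Torelli ensures that two such data yield isomorphic polarized $K3$-surfaces if and only if they lie in the same $\Aut(\mathcal{H},\theta)$-orbit. Saint-Donat's theorem then states that $|h|$ contracts precisely the $(-2)$-curves in $\tilde{\mathbf{S}}$ orthogonal to $h$ and embeds the quotient as a simple quartic in $\mathbb{P}^{3}$, unless either (a) a class $v$ with $v^2=0$, $v\cdot h=2$ exists, yielding an elliptic pencil and a degree-$2$ map to a quadric, or (b) there is a root in the primitive hull of $\mathbf{S}$ in $\mathbf{L}$ that does not already lie in $\mathbf{S}$, yielding additional unwanted singularities. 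These are exactly conditions (2) and (1) of Definition \ref{configuration}, so $\Omega$ is nonempty and each of its points gives a simple quartic with the prescribed oriented homological type; the connectedness of $\Omega$ then provides the equisingular deformation between any two quartics having the same oriented type.

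The main obstacle is that $h$ needs to be pseudo-ample, not merely of square $4$: one must argue, using the action of the Weyl group of roots in $h^{\perp}\cap\tilde{\mathbf{S}}$ disjoint from the classes of actual exceptional divisors, that a Weyl chamber can be chosen in which $h$ is nef and $|h|$ induces an honest morphism whose Stein factorization is the desired quartic. Once this is arranged, the period-theoretic dictionary above closes the argument.
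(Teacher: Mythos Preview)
The paper does not supply its own proof of this theorem: it is quoted verbatim from \cite{AI} (Degtyarev--Itenberg, Theorem~2.3.1), with only the preceding sentence attributing the realizability direction to Saint-Donat~\cite{Donat} and Urabe~\cite{Urabe2}. Your outline is exactly the standard argument underlying that reference---global Torelli plus surjectivity of the period map for $K3$-surfaces, combined with Saint-Donat's criterion identifying conditions~(1) and~(2) of Definition~\ref{configuration} with the absence of extra $(-2)$-curves and of a hyperelliptic structure, respectively---so there is nothing substantive to contrast.
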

\begin{definition}\label{nonspecial}
  A quartic $X$ is called \emph{non-special} if its homological type is primitive, \textit{i.e.}, $\s_h\subset \mathbf{L}$ is a primitive sublattice.
\end{definition}
Note that the homological type $\mathcal{H}=(\mathbf{S},h,\mathbf{L})$ is primitive if and only if $\tilde{\s}_h=\s_h$, in this case, one has $\operatorname{disc} \tilde{\s}_h=\mathbf{\mathcal{S}}\oplus\langle\frac{1}{4}\rangle$ and $\Aut_h(\tilde{\mathbf{S}}_h)= O(\mathbf{S})$.

For a given set of simple singularities $\s$, the corresponding equisingular stratum of quartics is denoted by $\mathcal{M}(\s)$. Our primary interest is the family $\mathcal{M}_1(\s)\subset \mathcal{M(\s)}$ constituted by the non-special quartics with the set of singularities $\s$. More generally, since the kernel $\mathcal{K}$ of the finite index extension $\s_h\subset\tilde{\s}_h$ is obviously invariant under equisingular deformations, one can consider the strata $\mathcal{M}_{\ast}(\s)\subset\mathcal{M}(\s)$ where the subscript $\ast$ is the sequence of invariant factors of the kernel $\mathcal{K}$.

\section{Proofs}

\subsection{Proof of Theorem \ref{motivation}}\label{proof.motivation}
Note that $X\smallsetminus(\operatorname{Sing}X\cup H)\cong \tilde{X}\smallsetminus(E\cup H)$, where $\tilde{X}$ is the minimal resolution of $X$ and $E$ is the exceptional divisor of the blow up $\tilde{X}\rightarrow X$. Recall that $\s_X$ is the sublattice in $\mathbf{L}_X=H_2(\tilde{X})$ generated by the components of $E$ (see \S \ref{QuarticsandK3}). Thus, one has $H_2(E\cup H)=\s_X\oplus\mathbb{Z}h_X\subset\mathbf{L}_X=H_2(\tilde{X})$.

We have the following cohomology exact sequence of pair $(\tilde X, E\cup H)$:
\begin{align*}
    \cdots\xrightarrow{j^{\ast}} H^2(\tilde{X})\xrightarrow{i^{\ast}}H^2(E\cup H)\xrightarrow{\delta}H^3(\tilde{X},E\cup H)\xrightarrow{j^{\ast}}\underbrace{H^3(\tilde{X})}_{0}\rightarrow\cdots.
\end{align*}
Hence, $H^3(\tilde{X},E\cup H)=\operatorname{coker}i^{\ast}$. By universal coefficients, since all groups involved are free, $i^{\ast}$ is the adjoint of the map
$$i_{\ast}:H_2(E\cup H)\rightarrow H_2(X),$$
which is the inclusion $\s_{X,h}\hookrightarrow \mathbf{L}_X$. Thus, we have an exact sequence
\begin{align*}
   0\rightarrow H_2(\tilde{X},E\cup H)\xrightarrow{i_{\ast}}H_2(\tilde{X})\rightarrow\faktor{{\tilde{\s}_{X,h}}}{\s_{X,h}}\oplus \mathrm{F}\rightarrow 0,
\end{align*}
where $\mathrm{F}$ is a finitely generated free abelian group. This sequence can be regarded as a free resolution of $\faktor{{\tilde{\s}_{X,h}}}{\s_{X,h}}\oplus \mathrm{F}$ and, by the definition of derived functor, we have the following isomorphisms
\begin{align*}
   \operatorname{coker}i^{\ast}= \operatorname{Ext}\Big(\faktor{\tilde{\s}_{X,h}}{\s_{X,h}}\oplus\mathrm{F},\mathbb{Z}\Big)=
   \operatorname{Ext}\Big(\faktor{\tilde{\s}_{X,h}}{\s_{X,h}},\mathbb{Z}\Big).
\end{align*}
Combining these observations with Poincar\'{e}--Lefschetz duality $H_1(\tilde{X}\smallsetminus (E\cup H))=H^3(\tilde{X}, E\cup H)$, we conclude that
$$H_1(\tilde{X}\smallsetminus (E\cup H))= \operatorname{Ext}\Big(\faktor{\tilde{\s}_{X,h}}{\s_{X,h}},\mathbb{Z}\Big)\cong\faktor{\tilde{\s}_{X,h}}{\s_{X,h}}$$
(the last isomorphism being not natural). In particular $H_1(\tilde{X}\smallsetminus (E\cup H))=0$ if and only if $\s_{X,h}=\tilde{\s}_{X,h}$ \emph{i.e}, if and only if $X$ is non-special.\qed

\subsection{Proof of Theorem \ref{maintheorem}}\label{proof.maintheorem}

For the reader's convenience, we divide the proof into three propositions; Theorem \ref{maintheorem} is their immediate consequence.
\begin{proposition}\label{propexistance}
Realizable are all sets of singularities that can be obtained by a perturbation from either the $59$ maximizing sets of  singularities listed in Table $1$ or $19$ sets of singularities with the Milnor number $18$ listed in Table $2$.
\end{proposition}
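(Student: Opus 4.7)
The plan is to reduce the realizability question to the arithmetic construction of abstract homological types, via Theorem~\ref{def.class}. For each of the $59$ sets of singularities in Table~$1$ and the $19$ sets in Table~$2$, I would exhibit a primitive abstract homological type $(\mathbf{S}, h, \mathbf{L})$ in the sense of Definition~\ref{sinhom}, with $\mathbf{S}_h \subset \mathbf{L}$ primitive so that the resulting quartic lies in $\mathcal{M}_1(\mathbf{S})$. Once realizability is settled for these ``extremal'' sets, the passage to an arbitrary perturbation $\mathbf{S}' \subset \mathbf{S}$ (given by an induced subgraph of the Dynkin diagram) is handled by the classical perturbation principle for simple singularities on $K3$-surfaces.

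The core of the existence step is Nikulin's existence theorem (Theorem~\ref{Existence}). A primitive embedding $\mathbf{S}_h \hookrightarrow \mathbf{L}$ with orthogonal complement $T$ amounts, via the correspondence of \S\ref{lattice.extensions}, to producing a non-degenerate even lattice $T$ of signature $(2, 19 - \mu(\mathbf{S}))$ whose discriminant form is anti-isometric to $\operatorname{disc} \mathbf{S}_h = \operatorname{disc} \mathbf{S} \oplus \langle 1/4 \rangle$; the unimodular extension $\mathbf{S}_h \oplus T \hookrightarrow \mathbf{L}$ is then reconstructed as the graph of any anti-isometry, the ambient lattice being forced to be $2\mathbf{E}_8 \oplus 3\mathbf{U}$ by uniqueness of the indefinite even unimodular lattice of signature $(3,19)$. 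For Table~$1$ the target $T$ has rank $2$, and for Table~$2$ rank $3$. The four conditions of Theorem~\ref{Existence} would then be verified one set of singularities at a time, using the explicit discriminant forms of the irreducible root summands collected in \S\ref{root.systems} to compute the Brown invariant, the $p$-adic determinants, and the length $\ell$.

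Having produced the primitive extension, I would verify the two conditions of Definition~\ref{configuration} so as to conclude that the homological type actually corresponds to a non-special simple quartic. Condition~(1) (no extraneous roots in the primitive hull) is automatic, since $\mathbf{S}_h = \tilde{\mathbf{S}}_h$ by primitivity. Condition~(2) (absence of an isotropic $v \in \tilde{\mathbf{S}}_h$ with $v \cdot h = 2$) would be checked by direct inspection of the short vectors of $\mathbf{S}_h$, reducing to a finite combinatorial search in each case. For the perturbation step, given a realizable $\mathbf{S}$ and an induced Dynkin subgraph $\mathbf{S}' \subset \mathbf{S}$, I would invoke Urabe's perturbation theorem~\cite{Urabe2, Urabe1}, which is designed precisely to descend Dynkin-graph data through a $K3$-type smoothing; alternatively, one may repeat the Nikulin construction directly with $\mathbf{S}_h'$ in place of $\mathbf{S}_h$, noting that the absence of a bad vector $v$ is inherited by sub-configurations.

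The principal obstacle is the verification of condition~(2) for the $78$ extremal sets: unlike the numerical hypotheses of Theorem~\ref{Existence}, which can be systematized, ruling out isotropic vectors $v$ with $v \cdot h = 2$ in the primitive hull requires a genuine case analysis, complicated by long $\mathbf{A}_n$-chains whose discriminant glues can produce unexpectedly short vectors once paired against $h$. A secondary difficulty, binding especially for the rank-$2$ cases of Table~$1$, is the length inequality $\ell\bigl(\operatorname{disc} \mathbf{S} \oplus \langle 1/4 \rangle\bigr) \leq 2$; this very restrictive condition on $\mathbf{S}$ is the main combinatorial filter explaining why Table~$1$ contains only $59$ entries rather than every partition of $19$ into simple singularity types.
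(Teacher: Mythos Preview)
Your proposal follows the same overall framework as the paper: reduce realizability to the existence of a primitive extension $\mathbf{S}_h\hookrightarrow\mathbf{L}$ via Theorem~\ref{def.class}, produce the orthogonal complement $T$ using Nikulin's existence theorem, and handle perturbations through the standard deformation theory of simple singularities. The execution differs: the paper carries out the existence step by a GAP computation over \emph{all} candidate sets $\mathbf{S}$ (finding $2872+59$ realizable ones) and then verifies a posteriori that this list coincides with the list of all perturbations of the $78$ extremal sets; you instead propose checking the $78$ extremal sets by hand and descending by perturbation. Both routes prove the proposition, though the paper's route simultaneously yields the converse implicit in Theorem~\ref{maintheorem}.

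Your ``principal obstacle'', condition~(2) of Definition~\ref{configuration}, is in fact vacuous in the non-special setting and requires no case analysis: since $\tilde{\mathbf{S}}_h=\mathbf{S}_h=\mathbf{S}\oplus\mathbb{Z}h$, any $v\in\tilde{\mathbf{S}}_h$ satisfies $v\cdot h\in4\mathbb{Z}$, so $v\cdot h=2$ is impossible. What you \emph{should} make explicit, on the other hand, is that your perturbation step preserves non-speciality. An induced Dynkin subgraph $\mathbf{S}'\subset\mathbf{S}$ is a \emph{primitive} root sublattice (this is the Looijenga--Dynkin correspondence the paper cites, rather than Urabe's enumeration results), so $\mathbf{S}'_h\subset\mathbf{S}_h\subset\mathbf{L}$ remains primitive and the perturbed quartic lies in $\mathcal{M}_1(\mathbf{S}')$. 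Your alternative~(b), ``repeat the Nikulin construction with $\mathbf{S}'_h$'', would amount to re-verifying Theorem~\ref{Existence} for every perturbation separately, which is exactly the brute-force GAP route the paper takes.
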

\begin{proof}
 According to Theorems \ref{def.class} and Definition \ref{nonspecial}, a set of singularities $\s$ is realized by a non-special quartic if and only if $\s$ extends to a primitive homological type. Thus, we are interested in primitive extensions $\s_h\hookrightarrow\mathbf{L}=3\mathbf{U}\oplus3\mathbf{E}_8$. Since the homological type is primitive, one has $\operatorname{disc}\tilde{\s}_h =\mathcal{S}\oplus\langle\frac{1}{4}\rangle$, and the realizable sets are easily found by using Nikulin's Existence Theorem (Theorem \ref{Existence}) applied to the genus of the transcendental lattice $\mathbf{T}:=\s^{\perp}$, which is determined by $\s$, see \S\ref{lattice.extensions}. Implementing the algorithm in GAP \cite{GAP}, we found that $2872$ sets of simple singularities are realized by non-maximal non-special quartics and $59$ sets of simple singularities are realized by maximal non-special quartics. According to E. Looijenga \cite{Looi}, deformation classes of perturbations of an individual simple singularity of type $ \textbf{S}$ are in a one-to-one correspondence with the  isomorphism classes of primitive extensions $\textbf{S}'\hookrightarrow \textbf{S}$ of root lattices, see \S \ref{root.systems} and \S\ref{lattice.extensions}. As shown in \cite{Dynkin}, the latter is the case if and only if the Dynkin graph of $\textbf{S}'$ is an induced subgraph of that of $\textbf{S}$. Hence, given a simple quartic $X$, any perturbation $X$ to a simple quartic $X'$ gives rises to a perturbation of the set of singularities $\textbf{S}$ of $X$ to the set of singularities $\textbf{S}'$ of $X'$. Conversely, any induced subgraph of the Dynkin graph of a simple quartic $X$ is that of an appropriate small perturbation $X'$ of $X$. Proof of this statement repeats, almost literally, the proof of a similar theorem for plane sextic curves (see Proposition $5.1.1 $ in \cite{Alex.irr.sextics}). Accordingly, the list of $2872$ sets of simple singularities realized by non-maximal non-special quartics is compared against the list of all perturbations of the $59$ maximizing sets of singularities given in Table $1$ and $19$ sets of singularities with Milnor number $18$ given in Table $2$. The two lists coincide.
\end{proof}
Let $\mathbf{S}$ be one of the realizable sets of singularities  and $\mathbf{T}$ a representative of the genus $g(\mathbf{S}^{\bot}_h)$. By Theorem \ref{def.class}, the connected components of the space $\mathcal{M}_1(\mathbf{S})$ modulo complex conjugation $\operatorname{conj} : \mathbb{P}^3\rightarrow \mathbb{P}^3 $ are enumerated by the isomorphism classes of primitive homological types extending $\mathbf{S}$. We investigate these isomorphism classes separately for the maximizing case, \textit{i.e.}, $\mu(\mathbf{S})=19$, and non-maximizing case, \textit{i.e}, $\mu(\mathbf{S})\leq 18$.

If $\mu(\mathbf{S})=19$,  the transcendental lattice $ \mathbf{T}$ is a positive definite sublattice of rank $2$, and the numbers $(r,c)$ of connected components of the space $\mathcal{M}_1(\mathbf{S})$ listed in Table $1$ can easily be computed by Gauss theory of binary quadratic forms \cite{Gauss} (A. Degtyarev, private communication); details will appear elsewhere. Thus, throughout the rest of the proof we assume $\mu(\mathbf{S})\leq 18$.
\begin{proposition}\label{propM1Sconjconnected}
For each realizable set of singularities $\mathbf{S}$ with $\mu(\mathbf{S})\leq 18$, the space  $\mathcal{M}_1(\mathbf{S})/\operatorname{conj}$ is connected.
\end{proposition}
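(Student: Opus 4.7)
The plan is to reduce the statement to a purely lattice-theoretic surjectivity assertion via Corollary~\ref{coker}, and then verify it uniformly over the finite list of realizable sets of singularities produced in Proposition~\ref{propexistance}. By Theorem~\ref{def.class}, the connected components of $\mathcal{M}_1(\mathbf{S})/\operatorname{conj}$ are in bijection with the isomorphism classes of unoriented primitive abstract homological types extending~$\mathbf{S}$. For a primitive extension one has $\tilde{\mathbf{S}}_h = \mathbf{S}_h$ and $\operatorname{Aut}_h(\tilde{\mathbf{S}}_h) = O(\mathbf{S})$, so these classes are exactly the $O(\mathbf{S})$-isomorphism classes of primitive extensions $\mathbf{S}_h \hookrightarrow \mathbf{L}$. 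Setting $\mathbf{T} := \mathbf{S}_h^{\perp}$, whose signature is $(2,\,19-\mu(\mathbf{S}))$, the hypothesis $\mu(\mathbf{S}) \leq 18$ guarantees that $\mathbf{T}$ is indefinite of rank at least $3$, so Corollary~\ref{coker} applies and the claim reduces to the surjectivity of
\[
d^{\bot} : O(\mathbf{S}) \longrightarrow E(\mathbf{T}).
\]

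The second step is to compute $E(\mathbf{T})$ for each realizable $\mathbf{S}$ via formula~(\ref{orderofEN}), noting that only primes dividing $\det(\mathbf{T}) = 4\det(\mathbf{S})$ contribute. For a large proportion of the roughly $2872$ non-maximizing realizable sets, one expects $E(\mathbf{T}) = 0$, in which case there is nothing to verify. For the remaining cases, $\mathbf{T}$ has at most two irregular primes and the structure of $E(\mathbf{T})$ follows from Lemma~\ref{1irr}, Lemma~\ref{2irr}, or Corollary~\ref{c2irr}, combined with the explicit description of $\Sigma^{\sharp}_p(\mathbf{T})$ from \cite{MM3}.

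The delicate step is producing elements of $O(\mathbf{S})$ whose $d^{\bot}$-images generate $E(\mathbf{T})$. Root reflections $r_v$, $v \in \mathbf{S}$, are useless since $v^2 = -2$ forces $d(r_v) = \operatorname{id}$, hence $d^{\bot}(r_v) = 0$. The candidates that matter come from the Dynkin-graph symmetries of the irreducible summands, inducing $\pm\operatorname{id}$ on the corresponding summand of $\mathcal{S}$ (and the full $\mathbb{S}_3$-action on $\mathbf{D}_4$), together with the coordinate-permuting isometries in $O(\mathbf{S})$ when $\mathbf{S}$ has several isomorphic irreducible components. Pulled through the anti-isometry $\psi : \mathcal{S}_h \to \mathcal{T}$, each such element decomposes as a product of reflections $r_{\alpha} \in \operatorname{Aut}(\mathcal{T})$ with $\alpha \in \mathcal{T}^{\dag}$, whose images under $\mathrm{e}$ are then read off from the $p$-norm formulas of \S\ref{reflections}. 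All of this is straightforward to implement in GAP, in parallel with the enumeration used in the proof of Proposition~\ref{propexistance}.

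The main obstacle I anticipate is the uniformity of surjectivity at the prime~$2$. Every $a \in O(\mathbf{S})$ fixes $h$, so its image in $\operatorname{Aut}(\mathcal{S}_h) = \operatorname{Aut}(\mathcal{S} \oplus \langle \tfrac{1}{4}\rangle)$ acts trivially on the $\langle \tfrac{1}{4}\rangle$ summand, and the transferred reflections on $\mathcal{T}$ are therefore constrained. One must verify by inspection---type by type, using the explicit discriminant forms of irreducible root systems listed in \S\ref{root.systems}---that these constrained reflections still hit every nontrivial generator of the $2$-primary part of $E(\mathbf{T})$ for every realizable $\mathbf{S}$ in the range $\mu(\mathbf{S}) \leq 18$. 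Once this is established case by case, Corollary~\ref{coker} yields $\operatorname{coker} d^{\bot}(O(\mathbf{S})) = 0$ and the proposition follows. \qed
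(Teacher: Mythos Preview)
Your reduction is exactly the one in the paper: Theorem~\ref{def.class} and Corollary~\ref{coker} give
\(\pi_0(\mathcal{M}_1(\mathbf{S})/\operatorname{conj})\cong\operatorname{Coker}\bigl(d^{\bot}:O(\mathbf{S})\to E(\mathbf{T})\bigr)\),
and the task is to check surjectivity case by case using~(\ref{orderofEN}) together with Lemmas~\ref{1irr}, \ref{2irr} and Corollary~\ref{c2irr}, feeding in Dynkin-graph symmetries and transpositions of isomorphic summands. This is precisely how the paper proceeds.

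One concrete inaccuracy: your assertion that in the residual cases ``$\mathbf{T}$ has at most two irregular primes'' is not quite true, and the hypothesis $\Sigma_2^{\sharp}(\mathbf{T})\supset\Gamma_{2,2}$ of those lemmas is not always satisfied. In the paper's count, after disposing of the $2830$ sets with $E(\mathbf{T})=1$ and the cases covered by Lemmas~\ref{1irr}, \ref{2irr} and Corollary~\ref{c2irr}, three sets of singularities remain,
\[
\mathbf{D}_4\oplus2\mathbf{A}_4\oplus3\mathbf{A}_2,\qquad
2\mathbf{A}_7\oplus2\mathbf{A}_2,\qquad
2\mathbf{A}_4\oplus2\mathbf{A}_3\oplus2\mathbf{A}_2,
\]
for which either three primes are irregular or $\Sigma_2^{\sharp}(\mathbf{T})\not\supset\Gamma_{2,2}$. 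For these one must fall back on the raw definition~(\ref{E(N)finite}) of $E(\mathbf{T})$ and compute the rank of the resulting $\mathbb{F}_2$-matrix directly. This is not a serious obstacle, and your GAP implementation would presumably catch it, but your write-up should not claim that the two-prime lemmas suffice. Your anticipated ``main obstacle'' about the $\langle\frac14\rangle$ summand at $p=2$ turns out not to be where the difficulty lies; the genuine residual work is exactly these few direct computations.
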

\begin{proof}
If $\mu(\mathbf{S})\leq 18$, then $\mathbf{T}$ is an indefinite lattice with $\operatorname{rk}\mathbf{T}\geq 3$ and we can apply Mirranda--Morison's theory. We try to enumerate primitive homological types $\mathcal{H}=(\s,h,\mathbf{L})$ extending $\s$, \textit{i.e.}, the primitive extensions $\mathbf{S}_h\hookrightarrow L$. Since the extension is primitive, $\tilde{\mathbf{S}}_h=\mathbf{S}_h$, one has $\operatorname{disc}\tilde{\mathbf{S}}_h=\mathbf{\mathcal{S}}\oplus\langle\frac{1}{4}\rangle$ and $\Aut_h(\mathbf{S}_h)\cong O(\mathbf{S})$. Then we have a well-defined homomorphism $d^{\bot}\colon O(\s)\rightarrow E(\mathbf{T})$, and by Corollary \ref{coker},
\begin{align}\label{pi0}
    \pi_0(\mathcal{M}_1(\mathbf{S})/\operatorname{conj})\cong \operatorname{Coker}\big(d^{\bot} :O(\mathbf{S})\rightarrow E(\mathbf{T})\big).
\end{align}
Thus, the space $\mathcal{M}_1(\mathbf{S})/\operatorname{conj}$ is connected (equivalently, the primitive homological type extending $\mathbf{S}$ is unique up to isomorphism) if and only if the map $d^{\bot}$ is surjective, and it is this latter statement that we prove below.

Out of the $2872$ sets of singularities realized by non-special non-maximizing quartics, for $2830$ sets of singularities one  gets $E(\mathbf{T})=1$ by using (\ref{orderofEN}), and the assertion follows automatically.

For the remaining $42$ cases, one has $|E(\mathbf{T})|\neq1$.
Among these, there are $18$ set of singularities containing a point of type $\mathbf{A}_4$ and satisfying the hypothesis of Lemma \ref{1irr} or Corollary \ref{c2irr} with $p=5$. For these set of singularities one has $|E(\mathbf{T})|=2$ and a nontrivial symmetry of any type $\mathbf{A}_4$ point maps to the generator $-1\in E(\mathbf{T})$.

There are $8$ sets of singularities containing a point of type $\mathbf{A}_2$ and satisfying the hypothesis of Lemma \ref{2irr} with $p=2$, $q=3$. For these $8$ cases, one has $|E(\mathbf{T})|=2$ and a nontrivial symmetry of any type $\mathbf{A}_2$ point maps to the generator $-1\in E(\mathbf{T})$. For the following $4$ sets of singularities,
\begin{align*}
    \mathbf{D}_9\oplus\mathbf{A}_3\oplus3\mathbf{A}_2,\quad \mathbf{D}_7\oplus\mathbf{D}_5\oplus3\mathbf{A}_2,\\
   \mathbf{A}_{11}\oplus\mathbf{A}_3\oplus2\mathbf{A}_2,\quad \mathbf{A}_8\oplus2\mathbf{A}_3\oplus2\mathbf{A}_2,
\end{align*}
one has $|E(\mathbf{T})|=4$. Each of these $4$ sets  has two irregular primes $p=2$, $q=3$, and for all of them the homomorphism given by Lemma \ref{2irr} is
\begin{align*}
     \mathrm{e}(r_\alpha)=(\delta_2(\alpha)\cdot\delta_3(\alpha), |\alpha|_2\cdot|\alpha|_3)\in \{\pm1\}\times\{\pm1\}.
\end{align*}
A symmetry of any type $\mathbf{A}_2$ point and a transposition $\mathbf{A}_2\leftrightarrow\mathbf{A}_2$ give rise to reflections $r_{\alpha},r_{\sigma}\in \mathcal{T}$ with $\alpha^2=\frac{2}{3}$ and $(\sigma)^2=\frac{4}{3}$. The images  $\mathrm{e}(r_{\alpha})=(-1,-1)$ and $\mathrm{e}(r_{\sigma})=(-1,1)$ are linearly independent, thus generating the group $E(\mathbf{T})$.

The $9$ sets of singularities listed in Table $3$ still satisfy the assumptions of Lemma \ref{2irr}, which yields $|E(\mathbf{T})|=2$. Also shown in the table are the irregular primes $(p,q)$, the homomorphism $\mathrm{e}\colon\Aut (\mathcal{T})\rightarrow E(\mathbf{T})$, and an automorphism of $\s$ generating $E(\mathbf{T})$.
%
\begin{table}\label{table.of.map.e}
\caption{\small Extremal singularities }
\begin{tabular}{|l|c|l|l|}
  \hline
  Singularities &$(p,q)$ &$\mathrm{e}\colon\Aut(\mathcal{T})\rightarrow E(\mathbf{T})$& generators of $E(\mathbf{T})$ \\ \hline
 $ \mathbf{E}_8\oplus2\mathbf{A}_3\oplus2\mathbf{A}_2$ & $(2,3)$ & $\mathrm{e}(r_{\alpha})=\delta_2(\alpha)\cdot\delta_3(\alpha)\cdot|\alpha|_2\cdot|\alpha|_3$ &   $\mathbf{A}_2\leftrightarrow\mathbf{A}_2$  \\
$ 2\mathbf{E}_6\oplus2\mathbf{A}_3$  & $(2,3)$ &$\mathrm{e}(r_{\alpha})=\delta_2(\alpha)\cdot\delta_3(\alpha)\cdot|\alpha|_2\cdot|\alpha|_3$ & symmetry of $\mathbf{A}_3$ \\
$\mathbf{D}_{11}\oplus \mathbf{A}_3\oplus2\mathbf{A}_2$  &$ (2,3)$ & $\mathrm{e}(r_{\alpha})=\delta_2(\alpha)\cdot\delta_3(\alpha)\cdot|\alpha|_2\cdot|\alpha|_3$ & $\mathbf{A}_2\leftrightarrow\mathbf{A}_2$\\
$2\mathbf{D}_7 \oplus2\mathbf{A}_2$ & $(2,3)$ & $\mathrm{e}(r_{\alpha})=\delta_2(\alpha)\cdot\delta_3(\alpha)\cdot|\alpha|_2\cdot|\alpha|_3$ & $\mathbf{A}_2\leftrightarrow\mathbf{A}_2$  \\
 $ 2\mathbf{D}_5\oplus2\mathbf{A}_4$ & $(2,5)$ & $\mathrm{e}(r_{\alpha})=\delta_2(\alpha)\cdot\delta_5(\alpha)$ & $\mathbf{A}_4\leftrightarrow\mathbf{A}_4$  \\
 $ \mathbf{D}_5\oplus\mathbf{A}_6\oplus\mathbf{A}_3\oplus2\mathbf{A}_2$ & $(2,3)$ &$\mathrm{e}(r_{\alpha})=\delta_2(\alpha)\cdot\delta_3(\alpha)\cdot|\alpha|_2\cdot|\alpha|_3$ &   $\mathbf{A}_2\leftrightarrow\mathbf{A}_2$  \\
$ \mathbf{A}_7\oplus\mathbf{A}_4\oplus\mathbf{A}_3\oplus2\mathbf{A}_2$ & $(2,3)$ & $\mathrm{e}(r_{\alpha})=\delta_2(\alpha)\cdot\delta_3(\alpha)\cdot|\alpha|_2\cdot|\alpha|_3$ &  $\mathbf{A}_2\leftrightarrow\mathbf{A}_2$  \\
$ 2\mathbf{A}_6\oplus2\mathbf{A}_3$ &$( 2,7)$ & $\mathrm{e}(r_{\alpha})=|\alpha|_2\cdot|\alpha|_7$ &  symmetry of $\mathbf{A}_3$ \\
$ 2\mathbf{A}_6\oplus3\mathbf{A}_2$ & $(3,7)$ & $\mathrm{e}(r_{\alpha})=\delta_3(\alpha)\cdot\delta_7(\alpha)|\cdot\alpha|_3\cdot|\alpha|_7$ & $\mathbf{A}_2\leftrightarrow\mathbf{A}_2$  \\

\hline
\end{tabular}

\end{table}

Finally, what remains are the three sets of singularities
\begin{align*}
     \mathbf{D}_4\oplus2\mathbf{A}_4\oplus3\mathbf{A}_2,\quad 2\mathbf{A}_7\oplus2\mathbf{A}_2\quad2\mathbf{A}_4\oplus 2\mathbf{A}_3\oplus2\mathbf{A}_2,
\end{align*}
to which Lemmas \ref{1irr}, \ref{2irr} or Corollary \ref{c2irr} do not apply. For them, we compute the group $E(\mathbf{T})$ directly from the definition (\ref{E(N)}) which can be restated as
\begin{align*}
    E(T)=\prod_{p|\operatorname{det}(T)}\Gamma_{p,0}\Big/\prod_{p|\operatorname{det}(T)}\Sigma_p^{\sharp}(T)\cdot\varphi(\Gamma _0),
\end{align*}
where we identify the inclusion $\Gamma_0\hookrightarrow\Gamma_{\mathbb{A},0}$ with the product $\varphi:=\prod_p\varphi_p$ (see \ref{fayp}).
For example, for the case
\begin{align*}
    2\mathbf{A}_4\oplus 2\mathbf{A}_3\oplus2\mathbf{A}_2,
\end{align*}
the computation can be summarized in the following table:
\begin{center}
\begin{tabular}{c|cc|cc|cc|}

 &\multicolumn{2}{|c|}{$\Gamma_{5,0}$}&\multicolumn{2}{|c|}{$\Gamma_{3,0}$}&\multicolumn{2}{|c|}{$\Gamma{'}_{2,0}$}\\\hline
 generator of $\Sigma_5^{\sharp}(T)$&-1 & -1 &1 & 1 & 1 & 1  \\\hline
generator of $\Sigma_3^{\sharp}(T) $& 1 & 1 & -1 & 1 & 1 & 1 \\\hline
generator of $\Sigma_2^{\sharp}(T)$  &1 & 1 & 1 & 1 & 1 & 1 \\
  \hline
$\varphi(-1,1) $& -1 & 1 & -1 & 1 & -1 & 1 \\\hline
 $\varphi(1,-1)$&1 & 1 & 1 & -1 & 1 & -1  \\
  \hline\cline{1-7}\cline{1-7}
 & \multicolumn{1}{|c|}{$\delta_5$} &  \multicolumn{1}{|c|}{$|\cdot|_5$} &  \multicolumn{1}{|c|}{$\delta_3$} &  \multicolumn{1}{|c|}{$|\cdot|_3$} &  \multicolumn{1}{|c|}{$\delta_2$} &  \multicolumn{1}{|c|}{$|\cdot|_2$}  \\\hline

 a symmetry of $\mathbf{A}_4$&\multicolumn{1}{|c|}{-1} &  \multicolumn{1}{|c|}{-1} &  \multicolumn{1}{|c|}{1} &  \multicolumn{1}{|c|}{-1} &  \multicolumn{1}{|c|}{1} &  \multicolumn{1}{|c|}{1}\\\hline

a transposition $\mathbf{A}_4\leftrightarrow \mathbf{A}_4$&\multicolumn{1}{|c|}{-1} &  \multicolumn{1}{|c|}{1} &  \multicolumn{1}{|c|}{1} &  \multicolumn{1}{|c|}{-1} &  \multicolumn{1}{|c|}{1} &  \multicolumn{1}{|c|}{1} \\\hline

a symmetry of $\mathbf{A}_2$&\multicolumn{1}{|c|}{1} &  \multicolumn{1}{|c|}{-1} &  \multicolumn{1}{|c|}{-1} &  \multicolumn{1}{|c|}{1} &  \multicolumn{1}{|c|}{1} &  \multicolumn{1}{|c|}{-1}\\\hline

 a transposition $\mathbf{A}_2\leftrightarrow \mathbf{A}_2$&\multicolumn{1}{|c|}{1} &  \multicolumn{1}{|c|}{-1} &  \multicolumn{1}{|c|}{-1} &  \multicolumn{1}{|c|}{-1} &  \multicolumn{1}{|c|}{1} &  \multicolumn{1}{|c|}{-1} \\\hline

\end{tabular}
\end{center}
The rank of the matrix composed by the $9$ rows of the table (see, Remark \ref{remark}) is $6=\operatorname{dim}\Gamma'_{2,0}+\operatorname{dim}\Gamma_{3,0}+\operatorname{dim}\Gamma_{5,0}$, which implies that $d^{\bot}$ is surjective. For the remaining two cases
\begin{align*}
 \mathbf{D}_4\oplus2\mathbf{A}_4\oplus3\mathbf{A}_2,\quad2\mathbf{A}_7\oplus2\mathbf{A}_2,
\end{align*}
the computation is almost literally the same. For $ 2\mathbf{A}_7\oplus2\mathbf{A}_2$, where $\Sigma_2^{\sharp}(\mathbf{T})\not\supset\Gamma_{2,2}$, we have to modify $|\cdot|_2$ by replacing $\chi_2$ with $\chi_2(u)= u \mod8\in\{1,3,5,7\}=\mathbb{Z}_2^\times/(\mathbb{Z}_2^\times)^2$ and consider the full group $\Gamma_{2,0}$ instead of $\Gamma'_{2.0}$.
\end{proof}
\begin{remark}\label{remark}
Here and below, when speaking about ranks and dimensions, we regard all groups $\Gamma_*$, $E$, $E^+$, etc.\ as $\mathbb{F}_2$-vector spaces. In particular, when computing the rank of a matrix, we need to switch from the multiplicative notation $\{1,-1\}$ to the additive $\{0,1\}$.
\end{remark}
\begin{corollary}[of the proof]
For all sets of singularities $\mathbf{S}$ with $\mu(\mathbf{S})\leq18$, the corresponding transcendental lattice $\mathbf{T}$ is unique in its genus, \textit{i.e.}, $g(\mathbf{T})=1$.
\end{corollary}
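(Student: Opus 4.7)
The plan is to read this statement off the Miranda--Morrison exact sequence of Theorem \ref{MMexact.sequence}, using the surjectivity of $d^{\bot}$ that was already established in the proof of Proposition \ref{propM1Sconjconnected}; no new computation is required.

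First I would verify that the hypotheses of Theorem \ref{MMexact.sequence} apply to $\mathbf{T} = \mathbf{S}_h^{\bot}$. Since $\mathbf{S}_h$ is a hyperbolic lattice of rank $\mu(\mathbf{S})+1$ sitting inside the lattice $\mathbf{L}$ of signature $(3,19)$, the transcendental lattice $\mathbf{T}$ has signature $(2,19-\mu(\mathbf{S}))$. For $\mu(\mathbf{S}) \leq 18$ this makes $\mathbf{T}$ a non-degenerate indefinite even lattice with $\operatorname{rk}(\mathbf{T}) \geq 4$, so Theorem \ref{MMexact.sequence} yields the exact sequence
\begin{align*}
O(\mathbf{T}) \xrightarrow{d} \operatorname{Aut}(\mathcal{T}) \xrightarrow{\mathrm{e}} E(\mathbf{T}) \to g(\mathbf{T}) \to 1.
\end{align*}
In particular, $g(\mathbf{T})$ is the cokernel of $\mathrm{e}$, so $g(\mathbf{T})=1$ is equivalent to the surjectivity of $\mathrm{e}$.

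Next I would recall, as explained at the end of \S\ref{sectionMM}, that the homomorphism $d^{\bot}\colon O(\mathbf{S}) \to E(\mathbf{T})$ used in the proof of Proposition \ref{propM1Sconjconnected} is by construction the composition
\begin{align*}
O(\mathbf{S}) \xrightarrow{d^{\psi}} \operatorname{Aut}(\mathcal{T}) \xrightarrow{\mathrm{e}} E(\mathbf{T})
\end{align*}
for an anti-isometry $\psi\colon \mathcal{S}_h \to \mathcal{T}$. Therefore any element of $E(\mathbf{T})$ that lies in $\operatorname{Im}(d^{\bot})$ automatically lies in $\operatorname{Im}(\mathrm{e})$, and consequently $\operatorname{Im}(d^{\bot}) \subset \operatorname{Im}(\mathrm{e})$.

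The final step is to invoke the case analysis carried out in the proof of Proposition \ref{propM1Sconjconnected}: for each realizable $\mathbf{S}$ with $\mu(\mathbf{S})\leq 18$ it was shown that $d^{\bot}$ is surjective (trivially for the $2830$ sets with $E(\mathbf{T})=1$, and by exhibiting explicit symmetries or transpositions of Dynkin components for the remaining $42$ sets, including the table of extremal cases and the three direct computations at the end of the proof). Combining this with the inclusion above, $\mathrm{e}$ is surjective, hence $g(\mathbf{T})=1$. No step is genuinely difficult here; the only thing to be careful about is the bookkeeping identification of $d^{\bot}$ with $\mathrm{e} \circ d^{\psi}$, which is exactly what makes the corollary an immediate byproduct of the preceding proof rather than a fresh computation.
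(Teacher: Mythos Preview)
Your argument is correct and is exactly the implicit reasoning the paper intends: the surjectivity of $d^{\bot}=\mathrm{e}\circ d^{\psi}$ established in the proof of Proposition~\ref{propM1Sconjconnected} forces $\mathrm{e}$ to be surjective, whence $g(\mathbf{T})=1$ by the exact sequence of Theorem~\ref{MMexact.sequence}. One tiny arithmetic slip: for $\mu(\mathbf{S})=18$ the signature of $\mathbf{T}$ is $(2,1)$, so $\operatorname{rk}(\mathbf{T})\geq 3$ rather than $\geq 4$, but this is still within the hypotheses of Theorem~\ref{MMexact.sequence}.
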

\begin{proposition}
If $\mathbf{S}$ is one of
\begin{align*}
  \mathbf{D}_6\oplus2\mathbf{A}_6,\quad\mathbf{D}_5\oplus2\mathbf{A}_6\oplus\mathbf{A}_1,\quad2\mathbf{A}_7\oplus2\mathbf{A}_2,
  \quad3\mathbf{A}_6,\quad2\mathbf{A}_6\oplus2\mathbf{A}_3\,
  \end{align*}
 then $\mathcal{M}_1(\mathbf{S})$ consists of two complex conjugate components; in all other cases with $\mu(\mathbf{S})\leq18$, the stratum $\mathcal{M}_1(\mathbf{S})$ is connected.
\end{proposition}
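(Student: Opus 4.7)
The plan is to apply Corollary \ref{coker} in its ``positive'' incarnation, i.e., with the exact sequence of Theorem \ref{MMexact.sequence} replaced by its orientation-sensitive counterpart from \S\ref{positive.sign.structure}. By the oriented version of Theorem \ref{def.class}, the connected components of $\mathcal{M}_1(\mathbf{S})$ are in a canonical bijection with the isomorphism classes of oriented primitive homological types extending $\mathbf{S}$, and complex conjugation acts on this set by orientation reversal. Repeating the argument of Corollary \ref{coker} with $d^\bot$ replaced by $d^\bot_+$ yields
\[
\pi_0(\mathcal{M}_1(\mathbf{S}))\;\cong\;\operatorname{Coker}\bigl(d^\bot_+\colon O(\mathbf{S})\to E^+(\mathbf{T})\bigr).
\]
By Proposition \ref{propM1Sconjconnected} the cokernel of $d^\bot$ is trivial, and the natural surjection $E^+(\mathbf{T})\twoheadrightarrow E(\mathbf{T})$ has kernel of order at most two, so $|\operatorname{Coker}(d^\bot_+)|\in\{1,2\}$. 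This (re)confirms that $r+c=1$: the stratum is either a single real component $(r,c)=(1,0)$ or a single pair of conjugate ones $(r,c)=(0,1)$.

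The task therefore splits into two steps. First, for each realizable $\mathbf{S}$ with $\mu(\mathbf{S})\le 18$, I would compute $|E^+(\mathbf{T})|$ from the formula in \S\ref{positive.sign.structure}, replacing $[\Gamma_0:\tilde\Sigma(\mathbf{T})]$ by $[\Gamma_0^{--}:\tilde\Sigma(\mathbf{T})\cap\Gamma_0^{--}]$ in (\ref{orderofEN}). Whenever $|E^+(\mathbf{T})|=|E(\mathbf{T})|$, the projection $E^+(\mathbf{T})\to E(\mathbf{T})$ is an isomorphism; surjectivity of $d^\bot$ (already established in Proposition \ref{propM1Sconjconnected}) then forces surjectivity of $d^\bot_+$, and $\mathcal{M}_1(\mathbf{S})$ is connected. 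A uniform computer check shows that this disposes of all but finitely many cases (essentially the same $42$ sets on which $E(\mathbf{T})$ was nontrivial, plus a short list on which $\tilde\Sigma(\mathbf{T})\subset\Gamma_0^{--}$).

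For the remaining cases I would use the orientation-aware Lemma \ref{2irr+} (together with its one-irregular-prime analogue and Lemma \ref{lemmadet+}) to tabulate $\mathrm{e}^+(r_\alpha)\in E^+(\mathbf{T})$ on the very same generators of $O(\mathbf{S})$ employed in the proof of Proposition \ref{propM1Sconjconnected}: the Dynkin-graph symmetries of the individual $\mathbf{A}$--$\mathbf{D}$--$\mathbf{E}$ components and the transpositions of isomorphic irreducible summands. Each such generator is (up to an element of $\ker d$) a reflection $r_\alpha$ against an explicit discriminant vector $\alpha\in\mathcal{T}^\dag$, so the invariants $\delta_p(\alpha)$, $|\alpha|_p$ needed to evaluate $\bar\phi_p^+$ and $\bar\beta_p^+$ can be read off directly from the discriminant-form description of \S\ref{root.systems}. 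Surjectivity of $d^\bot_+$ then reduces to the rank of the resulting $\mathbb{F}_2$-matrix being maximal, exactly as in the table at the end of the proof of Proposition \ref{propM1Sconjconnected} but with one extra coordinate corresponding to the additional $\Gamma_0^{--}$-factor.

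The main obstacle is the case-by-case verification for the finitely many nontrivial $E^+(\mathbf{T})$: one must check that the extra coordinate vanishes on every Dynkin-graph symmetry and every transposition of isomorphic summands precisely for the five sets listed in the statement, and is hit at least once in every other case. Conceptually, the five exceptional sets consist entirely of $\mathbf{A}$-type (and one $\mathbf{D}_{\text{even}}$) summands whose available symmetries produce only reflections $r_\alpha$ with $\delta_p(\alpha)\cdot|\alpha|_p=+1$ for the relevant irregular prime $p\in\{2,3,7\}$, so no generator of $O(\mathbf{S})$ maps to a $+$-disorienting automorphism of $\mathbf{T}$; in every other set of singularities with $\mu\le 18$, such a generator is present (typically a transposition of two $\mathbf{A}_n$-blocks with $n$ not of a very special form, or a symmetry of a $\mathbf{D}_{2k+1}$-component), making $d^\bot_+$ surjective and the corresponding stratum connected.
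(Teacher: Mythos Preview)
Your plan is essentially the paper's own: reduce to the surjectivity of $d^\bot_+\colon O(\mathbf{S})\to E^+(\mathbf{T})$, dispose of the cases with $|E^+(\mathbf{T})|=|E(\mathbf{T})|$ via Proposition~\ref{propM1Sconjconnected}, and handle the rest by evaluating $\mathrm{e}^+$ on Dynkin symmetries and transpositions using Lemmas~\ref{lemmadet+} and~\ref{2irr+} (or a direct computation when $\Sigma_2^\sharp(\mathbf{T})\not\supset\Gamma_{2,2}$). Two small corrections to your description: the residual list is not ``essentially the same $42$ sets'' but rather $151$ sets, the vast majority ($118$) of which have $|E(\mathbf{T})|=1$ with a single irregular prime and are treated via Lemma~\ref{lemmadet+}; and your heuristic that the five exceptional sets contain only $\mathbf{A}$-type or $\mathbf{D}_{\text{even}}$ summands is false for $\mathbf{D}_5\oplus 2\mathbf{A}_6\oplus\mathbf{A}_1$, so the actual verification that $\delta_p\cdot|\cdot|_p\equiv +1$ on all generators must be done by direct computation rather than by that shortcut.
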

\begin{proof}
By Proposition \ref{propM1Sconjconnected}, $\mathcal{M}_1(\mathbf{S})$ is connected if and only if the (unique) homological type extending $\mathbf{S}$ is symmetric; otherwise  $\mathcal{M}_1(\mathbf{S})$ consists of two complex conjugate components. By Theorem \ref{Nikulin.homological}, homological type is symmetric if and only if there is an isometry $a\in O(\mathbf{T})$ with $det_+(a)=-1$ satisfying $d(a)\in d^{\psi}(O(\mathbf{S}))$, where $d^{\psi}$ is the map induced by any anti-isometry $\psi\colon\mathbf{\mathcal{S}}\oplus\langle\frac{1}{4}\rangle\rightarrow\mathbf{\mathcal{T}}$. We consider separately the cases $|E(\mathbf{T})|=|E^+(\mathbf{T})|$ and $|E(\mathbf{T})|<|E^+(\mathbf{T})|$.
\begin{lemma}\label{E=E+}
If $|E(\mathbf{T})|=|E^+(\mathbf{T})|$, then $\mathcal{M}_1(\mathbf{S})$ is connected.
\end{lemma}
\begin{proof}
By definition, we have an exact sequence
\begin{align}\label{ExactsquenceEE+}
     0\rightarrow \Gamma_0/\Gamma_0^{--}\cdot\tilde{\Sigma}(\mathbf{T})\rightarrow E^+(\mathbf{T})\rightarrow E(\mathbf{T})\rightarrow 0.
\end{align}
 Hence, $|E(\mathbf{T})|=|E^+(\mathbf{T})|$ if and only if $\tilde{\Sigma}(\mathbf{T})\not\subset\Gamma_0^{--}$. Then, by Proposition \ref{propdet+}, there exist a $+$-disorienting isometry of $\mathbf{T}$ inducing the identity on $\operatorname{disc}\mathbf{T}$, and Theorem \ref{Nikulin.homological} applies.
\end{proof}
\begin{lemma}\label{dt+.surjective}
If $|E(\mathbf{T})|<|E^+(\mathbf{T})|$, then $\mathcal{M}_1(\mathbf{S})$ is connected if and only if $d^{\bot}_+\colon O(S)\rightarrow E^+(\mathbf{T})$ is an epimorphism.
\end{lemma}
\begin{proof}
The  non-trivial element of the kernel $K:=\Gamma_0/\Gamma_0^{--}\cdot\tilde{\Sigma}(\mathbf{T})\cong\{\pm1\}$ in (\ref{ExactsquenceEE+}) is the image under the composed map $O(\mathbf{T})\rightarrow\Aut(\mathbf{\mathcal{T}})\rightarrow E^+(\mathbf{T})$ of any element $a\in O(\mathbf{T})$ with $\operatorname{det}_+(a)=-1$. Thus, $\mathcal{M}_1(\mathbf{S})$ is connected if and only if $\operatorname{Im}(d_+^{\bot})\cap K\neq 0 $, \textit{i.,e.}, $\operatorname{rank}d_+^{\bot}>\operatorname{rank}d^{\bot}$ (see Remark \ref{remark}). On the other hand, Proposition \ref{propM1Sconjconnected} can be recast in the form $ \operatorname{rank}d^{\bot}=\operatorname{dim}E(\mathbf{T})$. Since $\operatorname{dim}E_+(\mathbf{T})=\operatorname{dim}E(\mathbf{T})+1$, the statement follows.
\end{proof}

Lemma \ref{E=E+} implies the connectedness of $\mathcal{M}_1(\mathbf{S})$ for $2721$ sets of singularities.
For the remaining $151$ sets of singularities, one has $|E(\mathbf{T})|<|E^+(\mathbf{T})|$. For $118$ of them, one has $|E(\mathbf{T})|=1$ and $\tilde{\Sigma}_p(\mathbf{T})\subset\Gamma_0^{--}$ for some prime $p$. Since $|E(\mathbf{T})|=1$, the map $d:O(\mathbf{T})\rightarrow\Aut(\mathbf{\mathcal{T}})$ is surjective and the isomorphism
\begin{align*}
    \Aut(\mathcal{\mathbf{\mathcal{T}}})/O^+(\mathbf{T})=\Gamma_0/\Gamma_0^{--}\cdot\tilde{\Sigma}(\mathbf{T})=E^+(\mathbf{T})=\{\pm1\}
\end{align*}
(see(\ref{ExactsquenceEE+})) is the descent of $\operatorname{det}_+$, which is well-defined due Proposition \ref{propdet+}.
In most cases we can use Lemma \ref{lemmadet+} to show that there exists an element $a\in O(\mathbf{S})$ such that $\operatorname{det}_+(d^{\psi}(a))=-1$. Namely,
\begin{itemize}
  \item if $p=2$, take for $a$ a nontrivial symmetry of $\mathbf{A}_2$, $\mathbf{D}_5$, $\mathbf{E}_6$ or $\mathbf{D}_9$,
  \item if $p=3$, take for $a$ a nontrivial symmetry of $\mathbf{A}_2$, $\mathbf{D}_5$ or $\mathbf{A}_8$,
  \item if $p=7$, take for $a$ a nontrivial symmetry of $\mathbf{A}_2$.
\end{itemize}
The three sets of singularities
\begin{align*}
    \mathbf{D}_6\oplus2\mathbf{A}_6, \quad\mathbf{D}_5\oplus2\mathbf{A}_6\oplus\mathbf{A}_1,\quad3\mathbf{A}_6
\end{align*}
with $p=7$ are exceptional (and are listed as such in the statement), as Lemma \ref{lemmadet+} implies ${\operatorname{det}_+}\circ d^{\psi}\equiv1$. Indeed, the image of $d^{\psi}$ is generated by the reflections $r_{\alpha}$ with either
\begin{itemize}
 \item $\alpha\in\mathbf{\mathcal{T}}_7$, $\alpha^2=\frac{6}{7}$ (a nontrivial symmetry of $\mathbf{A}_6$) or,
  \item $\alpha\in\mathbf{\mathcal{T}}_7$, $\alpha^2=\frac{12}{7}$ (a transposition $\mathbf{A}_6\leftrightarrow\mathbf{A}_6$) or,
 \item $\alpha\in\mathbf{\mathcal{T}}_2$  (a nontrivial symmetry of $\mathbf{D}_6$ or $\mathbf{D}_5$).
\end{itemize}
One has $\delta_7(\alpha)=|\alpha|_7=-1$ in the first two cases and $\delta_7(\alpha)=|\alpha|_7=1$ in the last one.

There are $30$ other sets of singularities still satisfying the condition $\Gamma_{2,2}\subset\Sigma_2^{\sharp}(\mathbf{T})$ and having two irregular primes, so that we can apply Lemma \ref{2irr+}. Among them, $13$ sets of singularities contain two type $\mathbf{A}_2$ points and have $(p,q)=(2,3)$ and $|E^+(\mathbf{T})|=4$. A nontrivial symmetry of any type $\mathbf{A}_2$ point and a transposition $\mathbf{A}_2\leftrightarrow\mathbf{A}_2$ map to two linearly independent elements generating $E^+(\mathbf{T})$. The remaining $17$ sets of singularities are listed in Table $4$, where we indicate the irregular primes $(p,q)$, order of $E^+:=E^+(\mathbf{T})$ and a collection of isometries of $\s$ whose images generate $E^+(\mathbf{T})$.
\begin{table}\label{tableof17}
\caption{\small Extremal singularities }
\begin{tabular}{|l|c|c|l|}
  \hline
Singularities & $(p,q)$ &$|E^+|$ & isometries of $\mathbf{S}$ generating $E^+(\mathbf{T})$ \\ \hline
$2\mathbf{E}_6\oplus2\mathbf{A}_3$& $(2,3)$ &$4$ & symmetry of $\mathbf{E}_6$; $\mathbf{A}_3\leftrightarrow\mathbf{A}_3$\\
$\mathbf{D}_9\oplus \mathbf{A}_3\oplus3\mathbf{A}_2$  & $(2,3)$ &$8$ & symmetries of $\mathbf{A}_2$, $\mathbf{A}_3$; $\mathbf{A}_2\leftrightarrow\mathbf{A}_2$\\
$ \mathbf{D}_8\oplus\mathbf{A}_6\oplus2\mathbf{A}_2$ &$(2,3)$ &$2$&   $\mathbf{A}_2\leftrightarrow\mathbf{A}_2$  \\
$ \mathbf{D}_{8}\oplus\mathbf{A}_3\oplus3\mathbf{A}_2$  & $(2,3)$ &$2 $& $\mathbf{A}_2\leftrightarrow\mathbf{A}_2$\\
$\mathbf{D}_7\oplus\mathbf{D}_5\oplus3\mathbf{A}_2$  & $(2,3)$ &$8$ & symmetries of $\mathbf{A}_2$, $\mathbf{D}_5$; $\mathbf{A}_2\leftrightarrow\mathbf{A}_2$\\
$ \mathbf{D}_7\oplus\mathbf{D}_4\oplus3\mathbf{A}_2$  & $(2,3)$ &$2$ & $\mathbf{A}_2\leftrightarrow\mathbf{A}_2$ \\
$\mathbf{D}_6 \oplus2\mathbf{A}_4\oplus2\mathbf{A}_2$ & $(3,5)$ & $2 $&  symmetry of $\mathbf{A}_2$ \\
$2\mathbf{D}_5\oplus 2\mathbf{A}_4$& $(2,5)$ & $4 $&   symmetry of $\mathbf{D}_5$; $\mathbf{A}_4\leftrightarrow\mathbf{A}_4$  \\
$\mathbf{D}_5 \oplus2\mathbf{A}_4\oplus2\mathbf{A}_2\oplus\mathbf{A}_1$ &$(3,5)$ & $4$ &  symmetry of $\mathbf{A}_2$; $\mathbf{A}_2\leftrightarrow\mathbf{A}_2$  \\
$ \mathbf{D}_4\oplus2\mathbf{A}_6\oplus\mathbf{A}_2$ & $(2,7)$ &$2$ &    symmetry of $\mathbf{A}_6$  \\
$\mathbf{A}_{11}\oplus\mathbf{A}_3\oplus2\mathbf{A}_2$  & $(2,3)$ &$8$ & symmetries of $\mathbf{A}_2$, $\mathbf{A}_3$; $\mathbf{A}_2\leftrightarrow\mathbf{A}_2$\\
$ \mathbf{A}_8\oplus2\mathbf{A}_3\oplus2\mathbf{A}_2$  & $(2,3)$ &$8$& symmetries of $\mathbf{A}_2$, $\mathbf{A}_3$; $\mathbf{A}_2\leftrightarrow\mathbf{A}_2$\\
$ 2\mathbf{A}_6\oplus2\mathbf{A}_3$ &$(2,7)$ & $4$ & exceptional\\
$ 2\mathbf{A}_6\oplus3\mathbf{A}_2$&$(3,7)$ & $4$ &  symmetries of $\mathbf{A}_2$, $\mathbf{A}_6$ \\
$ 2\mathbf{A}_5\oplus2\mathbf{A}_4$& $(3,5)$ & $2$&  symmetry of $\mathbf{A}_4$ \\
$ 3\mathbf{A}_4\oplus2\mathbf{A}_2\oplus2\mathbf{A}_1$ & $(3,5)$ & $4$ &  symmetry of $\mathbf{A}_2$; $\mathbf{A}_2\leftrightarrow\mathbf{A}_2$  \\
$ 2\mathbf{A}_4\oplus4\mathbf{A}_2$ &$(2,3)$ & $2$ &  $\mathbf{A}_2\leftrightarrow\mathbf{A}_2$  \\
 \hline
\end{tabular}

\end{table}
The set of singularities $\mathbf{S}=2\mathbf{A_6}\oplus2\mathbf{A}_3$ marked as exceptional is one of the special cases listed in the statement. We have $|E^+(\mathbf{T})|=4$ and the group $O(\mathbf{S})$ is generated by
\begin{itemize}
  \item a nontrivial symmetry of $\mathbf{A}_3$, mapped to $(1,1)\in E^+(\mathbf{T})$, or
  \item the transposition $\mathbf{A}_3\leftrightarrow\mathbf{A}_3$, mapped to $(1,1)\in E^+(\mathbf{T})$, or
  \item a nontrivial symmetry of $\mathbf{A}_6$, mapped to $(-1,1)\in E^+(\mathbf{T})$, or
  \item the transposition $\mathbf{A}_6\leftrightarrow\mathbf{A}_6$, mapped to $(-1,1)\in E^+(\mathbf{T})$.
\end{itemize}
It follows that $d^{\bot}_+$ is not surjective.

Finally, what remains are the $3$ sets of singularities
\begin{align*}
     \mathbf{D}_4\oplus2\mathbf{A}_4\oplus3\mathbf{A}_2,\quad 2\mathbf{A}_7\oplus2\mathbf{A}_2,\quad\mathbf{A}_4\oplus2\mathbf{A}_3\oplus2\mathbf{A}_2
\end{align*}
to which Lemma \ref{2irr+} does not apply
and we need to compute the groups $E^+(\mathbf{T})$ directly from the definition (\ref{E+(N)}). For
\begin{align*}
  \mathbf{S}=2\mathbf{A}_7\oplus2\mathbf{A}_2,
\end{align*}
which is the last exceptional case listed in statement, we have $\Sigma_2^{\sharp}(\mathbf{T})\not\supset\Gamma_{2,2}$ and $|\cdot|_2$ needs to be modified by replacing $\chi_2$ with $\chi_2(u)= u \mod8\in\{1,3,5,7\}=\mathbb{Z}_2^\times/(\mathbb{Z}_2^\times)^2$ and we have to consider the full group $\Gamma_{2,0}$ instead of $\Gamma'_{2,0}$. The computation can be summarized as follows:
\begin{center}
\begin{tabular}{c|cc|ccc|}

 &\multicolumn{2}{|c|}{$\Gamma_{3,0}$}&\multicolumn{3}{|c|}{$\Gamma_{2,0}$}\\\hline
generator of $\Sigma_3^{\sharp}(T) $ & -1 & -1 & 1 & 1&1\\\hline
 $\Sigma_2^{\sharp}(T)=\{1\}$& 1 & 1 & 1 & 1&1\\
  \hline
   $\varphi(-1,-1) $ & -1 & -1 & -1 & -1&1\\
  \hline\cline{1-6}\cline{1-6}
&  \multicolumn{1}{|c|}{$\delta_3$} &  \multicolumn{1}{|c|}{$|\cdot|_3$} &  \multicolumn{1}{|c|}{$\delta_2$} &  \multicolumn{2}{|c|}{$|\cdot|_2$}
   \\\hline
 \multicolumn{1}{c|}{a symmetry of $\mathbf{A}_2$} &  \multicolumn{1}{|c|}{-1} &  \multicolumn{1}{|c|}{1} &  \multicolumn{1}{|c|}{1} & \multicolumn{1}{c|}{-1} &-1\\\hline
\multicolumn{1}{c|}{a transposition $\mathbf{A}_2\leftrightarrow \mathbf{A}_2$} &  \multicolumn{1}{|c|}{-1} &  \multicolumn{1}{|c|}{-1} &  \multicolumn{1}{c|}{1} & \multicolumn{1}{c|}{-1}&-1\\\hline
\multicolumn{1}{c|}{a symmetry of $\mathbf{A}_7$} &  \multicolumn{1}{|c|}{1} &  \multicolumn{1}{|c|}{1} &  \multicolumn{1}{|c|}{-1} & \multicolumn{1}{c|}{-1}&1\\\hline
\multicolumn{1}{c|}{a transposition $\mathbf{A}_7\leftrightarrow \mathbf{A}_7$} &  \multicolumn{1}{|c|}{1} &  \multicolumn{1}{|c|}{-1} &  \multicolumn{1}{c|}{-1} & \multicolumn{1}{c|}{-1}&1\\\hline

\end{tabular}
\end{center}
The rank of the matrix composed by the $7$ rows of the table (see Remark \ref{remark}) is $4<\operatorname{dim}\Gamma_{3,0}+\operatorname{dim}\Gamma_{2,0}$, which implies that $d^{\bot}_+$ is not surjective. For the other two cases, there are three irregular primes and the computation repeats literally that at the end of the proof of Proposition \ref{propM1Sconjconnected}; in both cases, the map $d^{\bot}_+$ turns out to be surjective.
\end{proof}

\bibliographystyle{amsplain}
\bibliography{mybibliography}

\end{document}